\theoremstyle{plain}
\newtheorem{theorem}{Theorem}[section]
\newtheorem{proposition}[theorem]{Proposition}
\newtheorem{lemma}[theorem]{Lemma}
\theoremstyle{definition}
\newtheorem{definition}[theorem]{Definition}
\newtheorem{example}[theorem]{Example}
\newtheorem{remark}[theorem]{Remark}
\newcommand{\IN}{\ensuremath{\mathbb{N}}}
\newcommand{\nset}[1]{\ensuremath{[{#1}]}}
\newcommand{\Nout}[2][]{\ensuremath{N_\mathrm{o}^{#1}({#2})}}
\newcommand{\GA}[1]{\ensuremath{\mathbb{A}({#1})}}
\newcommand{\divides}{\ensuremath{\mathrel{\!|\!}}}
\newcommand{\card}[1]{\ensuremath{\lvert{#1}\rvert}}
\newcommand{\Htt}[1][t,t']{\ensuremath{\ifthenelse{\equal{#1}{}}{H}{H_{#1}}}}
\newcommand{\Mtt}[1][t,t']{\ensuremath{\ifthenelse{\equal{#1}{}}{M}{M_{#1}}}}
\newcommand{\Ltt}[1][t,t']{\ensuremath{\ifthenelse{\equal{#1}{}}{L}{L_{#1}}}}
\newcommand{\Ytt}[1][t,t']{\ensuremath{\ifthenelse{\equal{#1}{}}{Y}{Y_{#1}}}}
\newcommand{\Ztt}[1][t,t']{\ensuremath{\ifthenelse{\equal{#1}{}}{Z}{Z_{#1}}}}
\newcommand{\Deltatt}[1][t,t']{\ensuremath{\ifthenelse{\equal{#1}{}}{\Delta}{\Delta_{#1}}}}
\newcommand{\OMEGAtt}[1][t,t']{\ensuremath{\ifthenelse{\equal{#1}{}}{\Omega}{\Omega_{#1}}}}
\newcommand{\omegatt}[1][t,t']{\ensuremath{\ifthenelse{\equal{#1}{}}{\omega}{\omega_{#1}}}}
\newcommand{\Lambdatt}[1][t,t']{\ensuremath{\ifthenelse{\equal{#1}{}}{\Lambda}{\Lambda_{#1}}}}
\newcommand{\lambdatt}[1][t,t']{\ensuremath{\ifthenelse{\equal{#1}{}}{\lambda}{\lambda_{#1}}}}
\newcommand{\MG}[1][G]{\ensuremath{\ifthenelse{\equal{#1}{}}{M}{M_{#1}}}}
\newcommand{\PG}[1][G]{\ensuremath{\ifthenelse{\equal{#1}{}}{P}{P_{#1}}}}
\newcommand{\EG}[1][G]{\ensuremath{\ifthenelse{\equal{#1}{}}{E}{E_{#1}}}}
\newcommand{\OG}[1][G]{\ensuremath{\ifthenelse{\equal{#1}{}}{O}{O_{#1}}}}
\newcommand{\ZG}[1][G]{\ensuremath{\ifthenelse{\equal{#1}{}}{Z}{Z_{#1}}}}
\newcommand{\BG}[1][G]{\ensuremath{\ifthenelse{\equal{#1}{}}{B}{B_{#1}}}}
\newcommand{\omegaG}[1][G]{\ensuremath{\ifthenelse{\equal{#1}{}}{\omega}{\omega_{#1}}}}
\newcommand{\lambdaG}[1][G]{\ensuremath{\ifthenelse{\equal{#1}{}}{\lambda}{\lambda_{#1}}}}
\newcommand{\GRcycle}[1]{\ensuremath{\mathsf{C}_{#1}}}
\DeclareMathOperator{\lcm}{lcm}
\newcommand{\PROPtwoone}{2.1}
\newcommand{\PROPtwofive}{2.5}
\newcommand{\PROPtwosix}{2.6}
\newcommand{\LEMthreeone}{3.1}
\newcommand{\THMthreethree}{3.3}
\newcommand{\PROPfourone}{4.1}
\newcommand{\DEFfourtwo}{4.2}
\newcommand{\LEMfourfour}{4.4}
\newcommand{\LEMfoureight}{4.8}
\newcommand{\LEMfournine}{4.9}
\newcommand{\LEMfoureleven}{4.11}
\newcommand{\PROPfivefour}{5.4}
\newcommand{\REMfivefive}{5.5}
\newcommand{\LEMfivesix}{5.6}
\newcommand{\PROPfiveeight}{5.8}
\newcommand{\PROPfivenine}{5.9}
\newcommand{\FIGfive}{Figure~3 of Part~I}      
\begin{document}

\title[Associative spectra of graph algebras II]{Associative spectra of graph algebras II. \\ Satisfaction of bracketing identities, \\ spectrum dichotomy}

\author{Erkko Lehtonen}

\address[E. Lehtonen]%
   {Centro de Matem\'atica e Aplica\c{c}\~oes \\
    Faculdade de Ci\^encias e Tecnologia \\
    Universidade Nova de Lisboa \\
    Quinta da Torre \\
    2829-516 Caparica \\
    Portugal}

\author{Tam\'as Waldhauser}

\address[T. Waldhauser]%
   {University of Szeged \\
    Bolyai Institute \\
    Aradi v\'ertan\'uk tere 1 \\
    H-6720 Szeged \\
    Hungary}

\begin{abstract}
A necessary and sufficient condition is presented for a graph algebra to satisfy a bracketing identity.
The associative spectrum of an arbitrary graph algebra is shown to be either constant or exponentially growing.
\end{abstract}

\maketitle

\setcounter{section}{5}


\section{Introduction to Part II}

This paper continues our study, initiated in \cite{PartI}, of associative spectra of graph algebras.
Introduced by Cs\'ak\'any and Waldhauser \cite{CsaWal-2000}, the associative spectrum of a binary operation or of the corresponding groupoid is a method of quantifying the degree of (non)\hyp{}associativity of the operation.
Graph algebras were introduced by Shallon~\cite{Shallon} as a way of encoding an arbitrary directed graph as an algebra with a binary operation.
We refer the reader to the first part of this study \cite{PartI} -- henceforth called ``Part~I'' -- for formal definitions, background, motivations, and further details that will not be repeated in this outline.
We continue the numbering of sections from Part~I, so that we can conveniently refer to theorems, definitions, etc.\ of Part~I simply by their numbers.

In Part~I, we determined the possible associative spectra of undirected graphs and classified undirected graphs by their spectra; there are only three distinct possibilities: constant $1$, powers of $2$, and Catalan numbers.
Furthermore, we characterized the antiassociative digraphs, and we determined the associative spectra of certain families of digraphs, such as paths, cycles, and graphs on two vertices.

In this paper, we turn our attention to graph algebras associated with arbitrary digraphs, which may be finite or infinite.
In Section~\ref{sec:satisfaction}, we provide a necessary and sufficient condition for a graph algebra to satisfy a nontrivial bracketing identity.
The condition is expressed in terms of several numerical structural parameters associated, on the one hand, with the digraph and, on the other hand, with a pair of bracketings.
We discuss in Section~\ref{sec:special} how some of the results of Part~I are obtained as special cases of this condition.

This result seems a first step towards a general description of the associative spectra of graph algebras associated with arbitrary digraphs.
Such a general result, however, eludes us.
We can nevertheless establish bounds for the possible associative spectra of graph algebras.
As we will see in Section~\ref{sec:dichotomy}, the associative spectrum of a graph algebra is either a constant sequence bounded above by $2$ or it grows exponentially,
the least possible growth rate of an exponential spectrum being $\alpha^n$, where $\alpha \approx 1.755$ is the following cubic algebraic integer:
\[
\alpha = \frac{1}{3}\sqrt[3]{\frac{25+3\sqrt{69}}{2}}+\frac{1}{3}\sqrt[3]{\frac{25-3\sqrt{69}}{2}}+\frac{2}{3}.
\]
This stands in stark contrast with associative spectra of arbitrary groupoids, where various subexponential spectra such as polynomials of arbitrary degrees are possible.


\section{Satisfaction of bracketing identities by digraphs}
\label{sec:satisfaction}

We now turn to the general case of arbitrary directed graphs.
We are going to define several numerical parameters pertaining, on the one hand, to a pair of distinct bracketing terms $t, t' \in B_n$ and, on the other hand, to a digraph $G$.
For easy reference, the various parameters are collected in Table~\ref{table:parameters} with cross\hyp{}references to their definitions.
With the help of these parameters, we can provide necessary and sufficient conditions for the graph algebra of a digraph to satisfy a bracketing identity.
These conditions are put together in Theorem~\ref{thm:main-parameters}.

\begin{table}
\begin{center}
\begin{tabular}{cccc}
\toprule
parameter & Definition & parameter & Definition \\
\midrule
$\Htt$ & \DEFfourtwo   &   $\MG$ & \ref{def:MG} \\
$\Mtt$ & \DEFfourtwo   &   $\PG$ & \ref{def:PG} \\
$\Ltt$ & \DEFfourtwo   &   $\EG$ & \ref{def:EG} \\
$\Ytt$ & \ref{def:Y}   &   $\OG$ & \ref{def:OG} \\
$\Ztt$ & \ref{def:Z}   &   $\ZG$ & \ref{def:ZG} \\
$\omegatt$ & \ref{def:omega} & $\BG$ & \ref{def:BG} \\
$\lambdatt$ & \ref{def:lambda} & $\omegaG$ & \ref{def:omegaG} \\
       &               &   $\lambdaG$ & \ref{def:lambdaG} \\
\bottomrule
\end{tabular}
\end{center}

\medskip
\caption{Parameters of pairs of bracketings and graphs.}
\label{table:parameters}
\end{table}

Recall the parameters $\Htt$, $\Mtt$, and $\Ltt$ from Definition~\DEFfourtwo.
The following lemma extends Lemma~\LEMfourfour{}.

\begin{lemma}
\label{lem:s}
Let $t, t' \in B_n$, $t \neq t'$, and
let $G$ be a digraph such that $\GA{G}$ satisfies the identity $t \approx t'$.
Denote $H := \Htt$, $M := \Mtt$, $L := \Ltt$.
Let $r$ be the integer provided by Lemma~\LEMfourfour.
Then there exists an integer $s$ with $L + 1 \leq s \leq r$ and $s \equiv L \pmod{M}$ such that the following
holds:
if $v_0 \rightarrow v_1 \rightarrow \dots \rightarrow v_H$ and $v_L \rightarrow v'_{L+1} \rightarrow v'_{L+2} \rightarrow \dots \rightarrow v'_H$ are walks in $G$, then $v_s \rightarrow v'_{L+1}$ and $v'_s \rightarrow v_{L+1}$ are edges in $G$.
In particular, $v_{L+1}$ and $v'_{L+1}$ belong to the same nontrivial strongly connected component.
\end{lemma}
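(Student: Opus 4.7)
The plan is to bootstrap from Lemma~\LEMfourfour, which already asserts the existence of an integer $r$ producing one forced edge whenever $\GA{G}$ satisfies $t\approx t'$. My goal is to (i) upgrade that single edge into the symmetric pair $v_s\to v'_{L+1}$ and $v'_s\to v_{L+1}$, (ii) locate $s$ inside the window $[L+1,r]$, and (iii) enforce the congruence $s\equiv L\pmod{M}$. The symmetry of the roles of $t$ and $t'$ will drive (i), while (ii) and (iii) will follow from periodicity built into Definition~\DEFfourtwo, exploited by modifying the given walks in a controlled way.

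First I would recall how terms are evaluated in $\GA{G}$: the product of two vertices $u$ and $w$ is $u$ if $uw$ is an arc of $G$, and otherwise the adjoined absorbing element. Hence any substitution of the variables of $t$ by vertices drawn from the walk $v_0\to v_1\to\dots\to v_H$ together with its alternate continuation $v_L\to v'_{L+1}\to\dots\to v'_H$ either evaluates to a specified vertex read off from the walks, or collapses to the absorbing element. Since $\GA{G}\models t\approx t'$, no such substitution may evaluate to a vertex on one side and the absorbing element on the other. The parameters $H=\Htt$, $M=\Mtt$, $L=\Ltt$ from Definition~\DEFfourtwo control respectively the depth of the bracketings, the period at which corresponding positions of $t$ and $t'$ re\hyp{}align, and the depth at which $t$ and $t'$ first disagree; Lemma~\LEMfourfour exploits this to force one particular edge at index $r$.

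Next I would replay the argument of Lemma~\LEMfourfour, but after shortening or periodically shifting the walks. Specifically, cutting the tail of $v_0\to\dots\to v_H$ after $M$ steps (or iterating this cut) yields new walks that remain admissible for the hypothesis of Lemma~\LEMfourfour{} and reproduce the edge\hyp{}existence conclusion at a shifted index. Because $M$ is the relevant period, each such shift changes the forced index by a multiple of $M$; the smallest index produced this way, subject to the constraint that the shifted walk still reaches the vertex $v'_{L+1}$, is the desired $s$. By construction $s\equiv L\pmod{M}$ and $L+1\leq s\leq r$. Interchanging the roles of the primed and unprimed walks (i.e., applying the same argument after swapping the two walks in the substitution) yields the companion edge $v'_s\to v_{L+1}$, with $s$ in the same range and congruence class because the two walks share the prefix up to position $L$.

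The main obstacle will be step (iii): verifying that the modified walks obtained by cutting or inserting blocks of length $M$ are truly legal walks in $G$ and that Lemma~\LEMfourfour{} can be reapplied to each of them to produce edges at the shifted index rather than at some unrelated position. This requires careful bookkeeping of how positions in $t$ and $t'$ correspond after an $M$\hyp{}shift, using the synchronization property built into Definition~\DEFfourtwo. Once $s$ is in hand, the strongly connected component assertion follows immediately: the subwalk $v_{L+1}\to v_{L+2}\to\dots\to v_s$ followed by the edge $v_s\to v'_{L+1}$ is a walk from $v_{L+1}$ to $v'_{L+1}$, and the analogous primed concatenation is a walk from $v'_{L+1}$ to $v_{L+1}$. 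The resulting closed walk through both vertices has length at least~$2$, so the component is nontrivial.
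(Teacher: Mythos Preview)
There is a genuine gap in your plan. Lemma~\LEMfourfour{} takes a \emph{single} walk $v_0\to\dots\to v_H$ and produces an edge $v_r\to v_{L+1}$ \emph{within that walk}. Reapplying it to shortened or shifted walks, or to the concatenated walk $v_0\to\dots\to v_L\to v'_{L+1}\to\dots\to v'_H$, can only ever yield edges of the form $v_*\to v_{L+1}$ or $v'_*\to v'_{L+1}$, never the cross\hyp{}edges $v_s\to v'_{L+1}$ and $v'_s\to v_{L+1}$ that the lemma claims. Your ``swap the two walks'' move does not fix this: swapping which walk is fed to Lemma~\LEMfourfour{} just produces the other intra\hyp{}walk edge. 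Moreover, cutting the tail of a walk of length $H$ by $M$ steps leaves a walk of length $H-M<H$, so Lemma~\LEMfourfour{} no longer applies at all; you cannot ``insert blocks of length $M$'' either, because you have not yet established that any relevant closed walk exists.

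The paper's argument is structurally different. It first applies Lemma~\LEMfourfour{} to each of the two length\hyp{}$H$ walks separately to obtain closed walks $v_{L+1}\to\dots\to v_r\to v_{L+1}$ and $v'_{L+1}\to\dots\to v'_r\to v'_{L+1}$. It then picks a specific variable $x_d$ with $d_T(x_d)=L+1<d_{T'}(x_d)$ (whose existence is exactly the definition of $L$) and builds a homomorphism $\varphi\colon T\to G$ by collapsing $T_{x_d}$ onto the primed closed walk and the rest of $T$ onto the unprimed walk extended periodically around its closed walk. Proposition~\PROPtwoone{} then forces $\varphi$ to be a homomorphism of $T'$ as well, and the $T'$\hyp{}edge from the $T'$\hyp{}parent $x_q$ of $x_d$ to $x_d$ yields the cross\hyp{}edge $(\varphi(x_q),\varphi(x_d))=(v_s,v'_{L+1})$. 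The index $s$ is the unique residue of $d_T(x_q)$ in $\{L+1,\dots,r\}$ modulo $r-L$, which depends only on $t,t'$; the congruence $s\equiv L\pmod{M}$ follows because $M\mid r-L$ and $d_T(x_q)\equiv L\pmod{M}$. The mechanism you are missing is precisely this mixed homomorphism that straddles both walks.
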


\begin{proof}
By the definition of $L$, there exists a vertex $x_d \in X_n$ such that either $d_T(x_d) = L + 1 < d_{T'}(x_d)$ or $d_{T'}(x_d) = L + 1 < d_T(x_d)$.
By changing the roles of $T$ and $T'$, if necessary, we may assume that $d_T(x_d) = L + 1 < d_{T'}(x_d)$.
Let $x_p$ be the parent of $x_d$ in $T$, and let $x_q$ be the parent of $x_d$ in $T'$.

Assume that $v_0 \rightarrow v_1 \rightarrow \dots \rightarrow v_H$ and $v_L \rightarrow v'_{L+1} \rightarrow v'_{L+2} \rightarrow \dots \rightarrow v'_H$ are walks in $G$.
By Lemma~\LEMfourfour, $v_r \rightarrow v_{L+1}$ and $v'_r \rightarrow v'_{L+1}$ are edges, so $v_{L+1} \rightarrow \dots \rightarrow v_r \rightarrow v_{L+1}$ and $v'_{L+1} \rightarrow \dots \rightarrow v'_r \rightarrow v'_{L+1}$ are closed walks in $G$.
Let $W$ be the walk that starts with $v_0 \rightarrow \dots \rightarrow v_L$ and continues by going around the closed walk $v_{L+1} \rightarrow \dots \rightarrow v_r \rightarrow v_{L+1}$ until it reaches length $h(T)$, and let $W'$ be the closed walk $v'_{L+1} \rightarrow \dots \rightarrow v'_r \rightarrow v'_{L+1}$.
Let $\varphi \colon X_n \to V(G)$ be the collapsing map of $(T,x_d)$ on $(W,W')$.
Since $\varphi$ is a homomorphism of $T$ into $G$, it is also a homomorphism of $T'$ into $G$ by Proposition~\PROPtwoone.
Since $(x_q, x_d) \in E(T')$, we have $(\varphi(x_q), \varphi(x_d)) \in E(G)$.
By definition, $\varphi(x_d) = v'_{L+1}$.
In order to determine $\varphi(x_q)$, note first that $q < d$ because $(x_q, x_d)$ is an edge in $T'$.
This implies that $x_q \notin T_{x_d}$ and thus $\varphi(x_q)$ lies in $W$, so $\varphi(x_q) = v_s$ for some $s \in \{0, 1, \dots, r\}$.
Since $d_T(x_q) \geq L+1$, $\varphi(x_q)$ lies on the closed walk $v_{L+1} \rightarrow \dots \rightarrow v_r \rightarrow v_{L+1}$.
Therefore $s$ is the unique element of the set $\{L+1, \dots, r\}$ such that $s \equiv d_T(x_q) \pmod{r-L}$; note that the value of $s$ does not depend on the walks $v_0 \rightarrow v_1 \rightarrow \dots \rightarrow v_H$ and $v_L \rightarrow v'_{L+1} \rightarrow v'_{L+2} \rightarrow \dots \rightarrow v'_H$ but only on $t$ and $t'$.
Since $r \equiv L \pmod{M}$, the number $r - L$ is divisible by $M$; therefore $s \equiv d_T(x_q) \equiv L \pmod{M}$.

Switching the roles of the closed walks $v_{L+1} \rightarrow \dots \rightarrow v_r \rightarrow v_{L+1}$ and $v'_{L+1} \rightarrow \dots \rightarrow v'_r \rightarrow v'_{L+1}$,
a similar argument shows that $(v'_s, v_{L+1}) \in E(G)$.
Now we have the closed walk $v_{L+1} \rightarrow \dots \rightarrow v_s \rightarrow v'_{L+1} \rightarrow \dots \rightarrow v'_s \rightarrow v_{L+1}$ in $G$.
This means, in particular, that $v_{L+1}$ and $v'_{L+1}$ belong to the same nontrivial strongly connected component.
\end{proof}

\begin{definition}
\label{def:MG}
For a digraph $G$, let $\MG$ be the least common multiple of the set of all numbers $m$ for which there exists a strongly connected component of $G$ that is an $m$\hyp{}whirl, with the convention that the least common multiple of the empty set is $1$.
If there is no finite upper bound on such numbers $m$, then define $\MG := \infty$.
\end{definition}

\begin{example}
Consider the graph $G$ shown in Figure~\ref{fig:G-param}.
Highlighted as shaded regions, the nontrivial strongly connected components are a $3$\hyp{}whirl and a $4$\hyp{}whirl.
Consequently, $\MG = \lcm(3,4) = 12$.
\end{example}

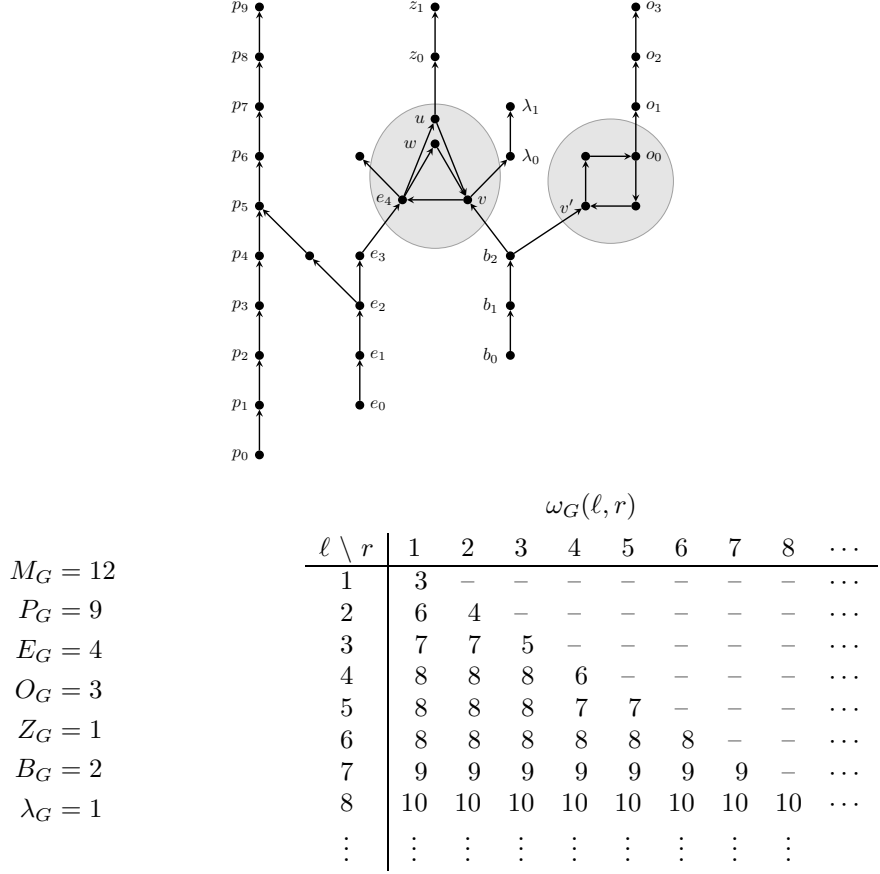
\begin{figure}
\begin{center}
\tikzset{every node/.style={circle,draw,inner sep=1.5,fill=black}, every path/.style={->,>=stealth,thick}}
\scalebox{0.66}{
\begin{tikzpicture}[scale=1]
\draw[thin,fill opacity=0.2,draw opacity=0.5,draw=gray!60!black,fill=gray] (3.5,5.6) circle [x radius = 1.3, y radius = 1.45];
\draw[thin,fill opacity=0.2,draw opacity=0.5,draw=gray!60!black,fill=gray] (7,5.5) circle [radius = 1.25];
\node[label=left:{$p_0$}] (a1) at (0,0) {};
\node[label=left:{$p_1$}] (a2) at (0,1) {};
\node[label=left:{$p_2$}] (a3) at (0,2) {};
\node[label=left:{$p_3$}] (a4) at (0,3) {};
\node[label=left:{$p_4$}] (a5) at (0,4) {};
\node[label=left:{$p_5$}] (a6) at (0,5) {};
\node[label=left:{$p_6$}] (a7) at (0,6) {};
\node[label=left:{$p_7$}] (a8) at (0,7) {};
\node[label=left:{$p_8$}] (a9) at (0,8) {};
\node[label=left:{$p_9$}] (a10) at (0,9) {};
\node (b5) at (1,4) {};
\node[label=right:{$e_0$}] (c2) at (2,1) {};
\node[label=right:{$e_1$}] (c3) at (2,2) {};
\node[label=right:{$e_2$}] (c4) at (2,3) {};
\node[label=right:{$e_3$}] (c5) at (2,4) {};
\node[label=left:{$e_4$}] (N11a) at ($(3.5,5.5)+(210:0.75)$) {};
\node[label=left:{$w$\,\,\,}] (N12a) at ($(3.5,5.5)+(90:0.75)$) {};
\node[label=left:{$u$}] (N12b) at ($(3.5,5.5)+(90:1.25)$) {};
\node[label=right:{$v$}] (N13a) at ($(3.5,5.5)+(330:0.75)$) {};
\node (c7) at (2,6) {};
\node[label=left:{$z_0$}] (d1) at (3.5,8) {};
\node[label=left:{$z_1$}] (d2) at (3.5,9) {};
\node[label=left:{$b_0$}] (e1) at (5,2) {};
\node[label=left:{$b_1$}] (e2) at (5,3) {};
\node[label=left:{$b_2$}] (e3) at (5,4) {};
\node[label=right:{$\lambda_0$}] (e4) at (5,6) {};
\node[label=right:{$\lambda_1$}] (e5) at (5,7) {};
\node[label=left:{$v'$}] (N21) at (6.5,5) {};
\node (N22) at (6.5,6) {};
\node (N24) at (7.5,5) {};
\node[label=right:{$o_0$}] (N23) at (7.5,6) {};
\node[label=right:{$o_1$}] (f1) at (7.5,7) {};
\node[label=right:{$o_2$}] (f2) at (7.5,8) {};
\node[label=right:{$o_3$}] (f3) at (7.5,9) {};
\path (a1) edge (a2);
\path (a2) edge (a3);
\path (a3) edge (a4);
\path (a4) edge (a5);
\path (a5) edge (a6);
\path (a6) edge (a7);
\path (a7) edge (a8);
\path (a8) edge (a9);
\path (a9) edge (a10);
\path (b5) edge (a6);
\path (c4) edge (b5);
\path (c2) edge (c3);
\path (c3) edge (c4);
\path (c4) edge (c5);
\path (c5) edge (N11a);
\path (N11a) edge (N12a);
\path (N11a) edge (N12b);
\path (N12a) edge (N13a);
\path (N12b) edge (N13a);
\path (N13a) edge (N11a);
\path (N11a) edge (c7);
\path (N12b) edge (d1);
\path (d1) edge (d2);
\path (e1) edge (e2);
\path (e2) edge (e3);
\path (e3) edge (N13a);
\path (e3) edge (N21);
\path (N13a) edge (e4);
\path (e4) edge (e5);
\path (N21) edge (N22);
\path (N22) edge (N23);
\path (N23) edge (N24);
\path (N24) edge (N21);
\path (N23) edge (f1);
\path (f1) edge (f2);
\path (f2) edge (f3);
\end{tikzpicture}}

\bigskip

\begin{minipage}{0.2\textwidth}
\begin{align*}
\MG &= 12 \\ 
\PG &= 9 \\
\EG &= 4 \\ 
\OG &= 3 \\
\ZG &= 1 \\
\BG &= 2 \\
\lambdaG &= 1
\end{align*}
\end{minipage}
\hfill
\begin{minipage}{0.7\textwidth}
\begin{center}
$\omegaG(\ell,r)$

\medskip
\begin{tabular}{c|ccccccccc}
$\ell$ $\backslash$ $r$ & 1 & 2 & 3 & 4 & 5 & 6 & 7 & 8 & $\cdots$ \\
\hline
1 & \phantom{0}3 &           -- &           -- &           -- &           -- &           -- &           -- & -- & $\cdots$ \\
2 & \phantom{0}6 & \phantom{0}4 &           -- &           -- &           -- &           -- &           -- & -- & $\cdots$ \\
3 & \phantom{0}7 & \phantom{0}7 & \phantom{0}5 &           -- &           -- &           -- &           -- & -- & $\cdots$ \\
4 & \phantom{0}8 & \phantom{0}8 & \phantom{0}8 & \phantom{0}6 &           -- &           -- &           -- & -- & $\cdots$ \\
5 & \phantom{0}8 & \phantom{0}8 & \phantom{0}8 & \phantom{0}7 & \phantom{0}7 &           -- &           -- & -- & $\cdots$ \\
6 & \phantom{0}8 & \phantom{0}8 & \phantom{0}8 & \phantom{0}8 & \phantom{0}8 & \phantom{0}8 &           -- & -- & $\cdots$ \\
7 & \phantom{0}9 & \phantom{0}9 & \phantom{0}9 & \phantom{0}9 & \phantom{0}9 & \phantom{0}9 & \phantom{0}9 & -- & $\cdots$ \\
8 &           10 &           10 &           10 &           10 &           10 &           10 &           10 & 10 & $\cdots$ \\
$\vdots$ & $\vdots$ & $\vdots$ & $\vdots$ & $\vdots$ & $\vdots$ & $\vdots$ & $\vdots$ & $\vdots$ & 
\end{tabular}
\end{center}
\end{minipage}
\end{center}

\caption{Graph $G$ and its structural parameters.}
\label{fig:G-param}
\end{figure}

\begin{definition}
\label{def:PG}\label{def:EG}\label{def:OG}
Let $G = (V,E)$ be a digraph.
Recall that a walk in $G$ is \emph{pleasant,} if all its vertices belong to trivial strongly connected components.
A walk in $G$ is \emph{winding,} if all its vertices belong to a single nontrivial strongly connected component of $G$.

Let $K$ be a nontrivial strongly connected component of $G$.
A path $v_0 \rightarrow v_1 \rightarrow \dots \rightarrow v_\ell$ in $G$ is called an \emph{entryway} to $K$ if $v_0 \rightarrow v_1 \rightarrow \dots \rightarrow v_{\ell-1}$ is a pleasant path and $v_\ell \in K$.
Analogously, $v_0 \rightarrow v_1 \rightarrow \dots \rightarrow v_\ell$ is called an \emph{outlet} from $K$ if $v_0 \in K$ and $v_1 \rightarrow v_2 \rightarrow \dots \rightarrow v_\ell$ is a pleasant path.

Denote by $\PG$, $\EG$ and $\OG$ the length of the longest pleasant path, entryway, and outlet in $G$, respectively.
If there is no finite upper bound on the length of pleasant paths, entryways, or outlets in $G$, then define $\PG := \infty$, $\EG := \infty$, $\OG := \infty$, respectively.
If there is no pleasant path, entryway, or outlet in $G$, then let $\PG := -\infty$, $\EG := -\infty$, $\OG := -\infty$, respectively.
\end{definition}

\begin{example}
In the graph $G$ of Figure~\ref{fig:G-param},
the longest pleasant path is $p_0 \rightarrow p_1 \rightarrow \dots \rightarrow p_9$,
the longest entryway is $e_0 \rightarrow e_1 \rightarrow e_2 \rightarrow e_3 \rightarrow e_4$,
and the longest outlet is $o_0 \rightarrow o_1 \rightarrow o_2 \rightarrow o_3$.
Therefore, $\PG = 9$, $\EG = 4$, $\OG = 3$.
\end{example}

\begin{lemma}\label{lem:M and P}
If a digraph $G$ satisfies the identity $t \approx t'$ for $t, t' \in B_n$, $t \neq t'$, then $\MG \divides \Mtt$ and $\PG < \Htt$. 
\end{lemma}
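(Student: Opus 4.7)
The two claims are independent and will be handled separately; both rely on specializing Lemma~\ref{lem:s} to walks with specific features in $G$.

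\emph{Plan for $\PG < \Htt$.}
I argue by contradiction. Suppose $\PG \geq H$ and pick the initial segment $v_0 \rightarrow v_1 \rightarrow \dots \rightarrow v_H$ of a pleasant path in $G$ of length at least $H$. Now apply Lemma~\ref{lem:s} using this single walk for both roles, namely by setting $v'_i := v_i$ for all $i \geq L$ (so that the second walk is literally the sub-walk $v_L \rightarrow v_{L+1} \rightarrow \dots \rightarrow v_H$ of the first). The conclusion of the lemma states that $v_{L+1} = v'_{L+1}$ lies in a nontrivial strongly connected component, which directly contradicts the defining property of a pleasant path that all its vertices lie in trivial strongly connected components. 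Hence $\PG < H$.

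\emph{Plan for $\MG \divides \Mtt$.}
By definition of $\MG$ as a least common multiple, it suffices to show that $m \divides \Mtt$ whenever $G$ has an $m$\hyp{}whirl strongly connected component $K$. Fix such a component $K$. Its period-$m$ structure partitions $V(K)$ into phase classes $K_0, \ldots, K_{m-1}$ with every edge of $K$ going from $K_i$ to $K_{(i+1) \bmod m}$, and consequently any walk inside $K$ advances the phase by exactly $1$ per step. I then apply Lemma~\ref{lem:s} with both walks chosen entirely inside $K$: the edge $v_s \rightarrow v'_{L+1}$ the lemma provides then lies in $K$, so the phase constraint forces $s \equiv L \pmod m$; combined with $s \equiv L \pmod{\Mtt}$, this gives $\lcm(m, \Mtt) \divides s - L$.

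\emph{Main obstacle.}
The crux is sharpening the joint divisibility $\lcm(m,\Mtt) \divides s - L$ to $m \divides \Mtt$. My plan is to exploit the flexibility present in the proof of Lemma~\ref{lem:s}: there, $s$ is constructed from a witness variable $x_d$ at level $L+1$ in one of $T, T'$ and deeper in the other, together with the parent $x_q$ of $x_d$ in the opposite tree. Ranging over all admissible choices of $x_d$ should yield a family of valid values of $s - L$ whose greatest common divisor is exactly $\Mtt$, forcing $m \divides \Mtt$. A potentially cleaner alternative is to bypass Lemma~\ref{lem:s} and invoke Proposition~\PROPtwoone{} directly with a homomorphism $\varphi \colon T \to G$ whose image lies in $K$: the cyclic phase structure of the whirl translates the identity $t \approx t'$ into the congruence $d_T(x) - d_{T'}(x) \equiv c \pmod m$ for every variable $x$ (with $c$ a common additive constant), from which $m \divides \Mtt$ follows immediately from the definition of $\Mtt$.
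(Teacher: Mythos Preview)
The paper's own proof is a one-line citation: both claims are exactly the contents of Lemmata~\LEMfoureight{} and~\LEMfoureleven{} from Part~I, so nothing needs to be re-argued here.

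Your route for $\PG < \Htt$ is correct but heavier than necessary. Lemma~\ref{lem:s} already invokes Lemma~\LEMfourfour{} to produce the closed walk $v_{L+1} \rightarrow \dots \rightarrow v_r \rightarrow v_{L+1}$; that closed walk alone places $v_{L+1}$ in a nontrivial strongly connected component and gives the contradiction. The extra machinery of Lemma~\ref{lem:s} (the second walk, the vertex $v'_{L+1}$, the index $s$) plays no role, so you are really just reproving the relevant piece of Lemma~\LEMfoureleven{} through an unnecessarily strong intermediary.

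For $\MG \divides \Mtt$, your primary approach via Lemma~\ref{lem:s} has a genuine gap that you correctly flag but do not close. From a single application you get $m \divides (s-L)$ and $\Mtt \divides (s-L)$, which yields nothing about $m \divides \Mtt$. Your proposed fix---varying $x_d$ to produce a family of values $s-L$ with $\gcd$ equal to $\Mtt$---is not substantiated: the admissible $x_d$'s in the proof of Lemma~\ref{lem:s} are only those with one depth equal to $L+1$, and the resulting $s$-values are the depths $d_T(x_q)$ of their parents in the other tree, reduced modulo $r-L$. There is no reason these particular residues should generate all of $\Mtt\ZZ / (r-L)\ZZ$; a priori their $\gcd$ with $r-L$ could be a proper multiple of $\Mtt$. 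So this line does not go through without substantial additional work.

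Your alternative---map $T$ into the $m$-whirl $K$ by collapsing onto a cycle, invoke Proposition~\PROPtwoone{} to get that the same map is a $T'$-homomorphism, and read off $d_T(x) \equiv d_{T'}(x) \pmod{m}$ for every $x$ from the block structure---is correct and is precisely the content of Lemma~\LEMfoureight{}. So in the end you are re-deriving the Part~I lemmata the paper simply cites.
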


\begin{proof}
This follows immediately from
Lemmata~\LEMfoureight{} and \LEMfoureleven.
\end{proof}

\begin{lemma}
\label{lem:L}
Let $t, t' \in B_n$, $t \neq t'$, and
let $G$ be a digraph such $\GA{G}$ satisfies the identity $t \approx t'$.
Then $\EG \leq \Ltt + 1$.
\end{lemma}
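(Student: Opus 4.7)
The plan is to argue by contradiction, applying Lemma~\ref{lem:s} with the two required walks taken to coincide. Suppose for contradiction that $\EG \geq \Ltt + 2$; writing $L := \Ltt$ and $H := \Htt$, this means there exists an entryway $v_0 \rightarrow v_1 \rightarrow \dots \rightarrow v_\ell$ in $G$ with $\ell \geq L + 2$. By the definition of an entryway, the vertices $v_0, v_1, \dots, v_{\ell-1}$ all belong to trivial strongly connected components, while $v_\ell$ belongs to some nontrivial strongly connected component $K$.

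The next step is to produce the two walks demanded by Lemma~\ref{lem:s}. Because $K$ is nontrivial, there is a closed walk of positive length through $v_\ell$ inside $K$, and traversing it repeatedly extends the entryway to a walk $v_0 \rightarrow v_1 \rightarrow \dots \rightarrow v_H$ of the required length. Then I set $v'_i := v_i$ for every $i$ with $L \leq i \leq H$, so that $v_L \rightarrow v'_{L+1} \rightarrow v'_{L+2} \rightarrow \dots \rightarrow v'_H$ is literally the tail of the first walk starting at $v_L$. Lemma~\ref{lem:s} now applies, and in particular yields that $v_{L+1}$ and $v'_{L+1} = v_{L+1}$ belong to the same \emph{nontrivial} strongly connected component; that is, $v_{L+1}$ itself lies in a nontrivial strongly connected component of $G$.

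This contradicts the structure of the entryway: the inequality $\ell \geq L + 2$ gives $L+1 \leq \ell - 1$, so $v_{L+1}$ is among $v_0, v_1, \dots, v_{\ell-1}$, whose vertices all lie in trivial strongly connected components. Hence the assumption $\EG \geq L + 2$ is untenable, and $\EG \leq L + 1 = \Ltt + 1$, as required. Given Lemma~\ref{lem:s}, this reduction is quite short; the only conceptual move is noticing that the two walks in the hypothesis of Lemma~\ref{lem:s} may be taken to be the same walk, which converts the ``same component'' conclusion into the much stronger statement that $v_{L+1}$ itself must be in a nontrivial component, in direct conflict with the pleasantness of the entryway's initial segment. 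I do not anticipate any real obstacle.
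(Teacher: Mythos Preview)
Your argument is correct and follows essentially the same contradiction strategy as the paper: assume an entryway of length at least $L+2$, extend it (using the nontrivial component $K$) to a walk of length $H$, conclude that $v_{L+1}$ lies in a nontrivial strongly connected component, and obtain a contradiction with the pleasantness of the first $\ell-1$ vertices of the entryway.

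The only noteworthy difference is in the lemma invoked. The paper appeals directly to Lemma~\LEMfourfour{} (from Part~I), which already gives an edge $v_r \rightarrow v_{L+1}$ for any single walk $v_0 \rightarrow \dots \rightarrow v_H$, so that $v_{L+1}$ lies on a closed walk and hence in a nontrivial component. You instead invoke Lemma~\ref{lem:s}, feeding it the same walk twice. This works, but is a detour: Lemma~\ref{lem:s} is itself proved from Lemma~\LEMfourfour{}, and its two-walk hypothesis and ``same component'' conclusion are not really needed here---the single-walk consequence of Lemma~\LEMfourfour{} suffices. Your ``trick'' of taking $v'_i = v_i$ simply recovers that single-walk consequence. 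A minor stylistic point: your phrase ``extends the entryway to a walk \dots\ of the required length'' tacitly assumes $H \geq \ell$; if $H < \ell$ one truncates instead, but the contradiction is the same since $L+1 \leq H$ always.
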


\begin{proof}
Denote $H := \Htt$, $L := \Ltt$.
Suppose, to the contrary, that there is an entryway $W \colon v_0 \rightarrow v_1 \rightarrow \dots \rightarrow v_k$, where $k > L + 1$.
Then $v_k$ belongs to a nontrivial strongly connected component $K$ and the other vertices of $W$ belong to trivial strongly connected components.
Extending $W$, if necessary, with vertices of $K$, we obtain a walk $v_0 \rightarrow v_1 \rightarrow \dots \rightarrow v_H$, and Lemma~\LEMfourfour{} implies that $v_{L+1}$ belongs to a nontrivial strongly connected component.
This is a contradiction.
\end{proof}

\begin{definition}
\label{def:Y}
Let $t, t' \in B_n$, $t \neq t'$, and denote $T := G(t)$, $T' := G(t')$.
Let $\Ytt$ be the largest integer $m$ such that for all $x_i \in X_n$,
\[
\bigl( h(T_{x_i}) \leq m \vee h(T'_{x_i}) \leq m \bigr) \implies T_{x_i} = T'_{x_i}
\]
In other words, the rooted induced subtrees of $T$ and $T'$ of height at most $\Ytt$ are identical.
Note that $-1 \leq \Ytt < \Htt$, and the equality $\Ytt = -1$ holds if and only if
$T$ and $T'$ have different sets of leaves.
\end{definition}

\begin{example}
Figure~\ref{fig:ex:YZ} shows two DFS trees corresponding to certain terms $t, t' \in B_{14}$.
It is easy to verify that $\Ytt = 3$: all subtrees of height at most $3$ are identical in the two trees, but the subtrees rooted at $x_3$ are distinct and have height $4$.
\end{example}

\begin{figure}
\begin{center}
\tikzset{every node/.style={circle,draw,inner sep=1.5,fill=black}, every path/.style={->,>=stealth,thick}}
\scalebox{0.66}{
\begin{tikzpicture}[scale=1, baseline=(x1)]
\node[label=below:{$x_1$}] (x1) at (0,0) {};
\node[label=left:{$x_2$}] (x2) at (0,1) {};
\node[label=left:{$x_3$}] (x3) at (0,2) {};
\node[label=left:{$x_4$}] (x4) at (0,3) {};
\node[label=left:{$x_5$}] (x5) at (-1,4) {};
\node[label=left:{$x_6$}] (x6) at (-2,5) {};
\node[label=left:{$x_7$}] (x7) at (-1,5) {};
\node[label=left:{$x_8$}] (x8) at (0,4) {};
\node[label=left:{$x_9$}] (x9) at (0,5) {};
\node[label=left:{$x_{10}$}] (x10) at (0,6) {};
\node[label=right:{$x_{11}$}] (x11) at (1,2) {};
\node[label=right:{$x_{12}$}] (x12) at (1,3) {};
\node[label=right:{$x_{13}$}] (x13) at (1,4) {};
\node[label=right:{$x_{14}$}] (x14) at (2,3) {};
\node[rectangle,draw=none,fill=none] () [below of=x1] {\Large $G(t)$};
\path (x1) edge (x2);
\path (x2) edge (x3);
\path (x3) edge (x4);
\path (x4) edge (x5);
\path (x5) edge (x6);
\path (x5) edge (x7);
\path (x4) edge (x8);
\path (x8) edge (x9);
\path (x9) edge (x10);
\path (x2) edge (x11);
\path (x11) edge (x12);
\path (x12) edge (x13);
\path (x11) edge (x14);
\end{tikzpicture}
}
\qquad
\scalebox{0.66}{
\begin{tikzpicture}[scale=1, baseline=(x1)]
\node[label=below:{$x_1$}] (x1) at (0,0) {};
\node[label=left:{$x_2$}] (x2) at (0,1) {};
\node[label=left:{$x_3$}] (x3) at (0,2) {};
\node[label=left:{$x_4$}] (x4) at (0,3) {};
\node[label=left:{$x_5$}] (x5) at (-1,4) {};
\node[label=left:{$x_6$}] (x6) at (-2,5) {};
\node[label=left:{$x_7$}] (x7) at (-1,5) {};
\node[label=left:{$x_8$}] (x8) at (0,4) {};
\node[label=left:{$x_9$}] (x9) at (0,5) {};
\node[label=left:{$x_{10}$}] (x10) at (0,6) {};
\node[label=right:{$x_{11}$}] (x11) at (1,3) {};
\node[label=right:{$x_{12}$}] (x12) at (1,4) {};
\node[label=right:{$x_{13}$}] (x13) at (1,5) {};
\node[label=right:{$x_{14}$}] (x14) at (2,4) {};
\node[rectangle,draw=none,fill=none] () [below of=x1] {\Large $G(t')$};
\path (x1) edge (x2);
\path (x2) edge (x3);
\path (x3) edge (x4);
\path (x4) edge (x5);
\path (x5) edge (x6);
\path (x5) edge (x7);
\path (x4) edge (x8);
\path (x8) edge (x9);
\path (x9) edge (x10);
\path (x3) edge (x11);
\path (x11) edge (x12);
\path (x12) edge (x13);
\path (x11) edge (x14);
\end{tikzpicture}
}
\end{center}
\caption{DFS trees with $\Ytt = 3$, $\Ztt = 2$.}
\label{fig:ex:YZ}
\end{figure}
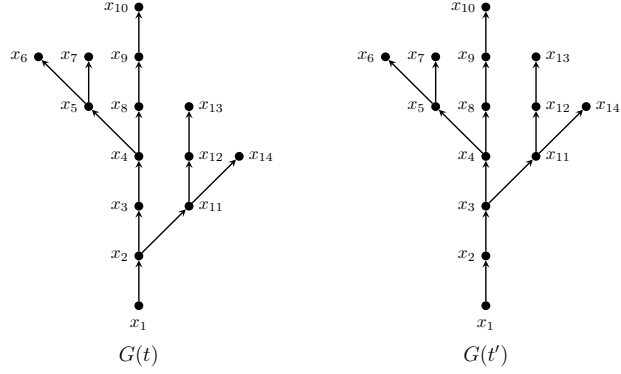

\begin{lemma}
\label{lem:Y}
Let $t, t' \in B_n$, $t \neq t'$, and
let $G$ be a digraph such that $\GA{G}$ satisfies the identity $t \approx t'$.
Then $\OG \leq \Ytt + 1$.
\end{lemma}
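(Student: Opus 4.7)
The plan is to mirror the proof of Lemma~\ref{lem:L}, exploiting the duality between $\Ltt$ (shallowest depth at which $T$ and $T'$ disagree) and $\Ytt$ (largest height up to which subtrees of $T$ and $T'$ agree), and between entryways (which run through trivial components into a nontrivial component) and outlets (which run out of a nontrivial component through trivial ones). Write $H := \Htt$ and $Y := \Ytt$.

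First I would argue by contradiction: assume there is an outlet $v_0 \rightarrow v_1 \rightarrow \dots \rightarrow v_k$ with $k > Y + 1$, so that $v_0$ lies in some nontrivial strongly connected component $K$ and $v_1, \dots, v_k$ lie in trivial components. After passing to a prefix I may assume $k = Y + 2$.

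Next I would extend this walk \emph{backwards} inside $K$. Since $K$ is a nontrivial strongly connected component, $v_0$ lies on a closed walk in $K$, and so there are walks within $K$ ending at $v_0$ of any sufficiently large length (using concatenations of cycles through $v_0$). Prepending such a walk of length $H - (Y+2)$ produces a walk $w_0 \rightarrow w_1 \rightarrow \dots \rightarrow w_H$ of length $H$ in $G$ whose last $Y+3$ vertices coincide with the outlet, so $w_{H-Y-2+j} = v_j$ for $0 \leq j \leq Y+2$.

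Finally I would invoke the $Y$\hyp{}flavoured counterpart of Lemma~\LEMfourfour{} (which I anticipate is Lemma~\LEMfournine{}): taking as witness a vertex $x_i \in X_n$ realising the definition of $\Ytt$, that is, with $h(T_{x_i}) = Y + 1 \leq h(T'_{x_i})$ (the reverse case being symmetric) and $T_{x_i} \neq T'_{x_i}$, this dual lemma should force a vertex at the ``tail end'' of the walk -- concretely $w_{H - Y - 1} = v_1$ -- to belong to a nontrivial strongly connected component. Since $v_1$ lies in a trivial component, this is the desired contradiction.

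The main obstacle is identifying the correct dual lemma and verifying that it indeed pins down a vertex at the predicted position, so that the critical index $H - Y - 1$ lands inside the trivial tail of the extended walk. The backward extension inside $K$ is essentially routine but requires a small argument about achievable walk lengths in a nontrivial strongly connected component. The boundary case $Y + 2 > H$ (which forces $Y = H - 1$, so that the outlet is too long to sit as a suffix of a length-$H$ walk) must be handled separately: in that situation the prefix $v_0 \rightarrow v_1 \rightarrow \dots \rightarrow v_H$ is already a walk of length $H$, and applying Lemma~\LEMfourfour{} directly forces $v_{L+1}$ (with $L+1 \geq 1$) to be in a nontrivial strongly connected component, again a contradiction.
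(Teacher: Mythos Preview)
Your strategy has a genuine gap at the decisive step. The lemma you hope to invoke does not exist in the paper: Lemma~\LEMfournine{} is not a height\hyp{}flavoured dual of Lemma~\LEMfourfour{}, but rather the statement underlying condition~\ref{cond:no-SCC-SCC} of Theorem~\ref{thm:main-parameters} (no path between distinct nontrivial strongly connected components). More fundamentally, no such clean dual is available. A witness $x_d$ for $\Ytt$ satisfies $h(T_{x_d}) = Y + 1$, but its \emph{depth} $d_T(x_d)$ is entirely unconstrained; if you collapse $T$ by depth onto a walk $w_0 \rightarrow \dots \rightarrow w_H$, the vertex $x_d$ lands at $w_{d_T(x_d)}$, not at $w_{H-Y-1}$, and there is no mechanism forcing the disagreement between $T$ and $T'$ to manifest at that fixed index. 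Your backward extension inside $K$ pins down where $v_1$ sits in the walk, but it cannot pin down where the witness subtree sits in $T$.

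The paper's proof bypasses this obstacle by abandoning the linear walk of length $H$ altogether. It fixes a cycle $C$ through $v_0$ in $K$ and uses the collapsing map of $(T, x_d)$ on $(C, W')$, where $W' \colon v_1 \rightarrow \dots \rightarrow v_k$: the subtree $T_{x_d}$ is sent onto the outlet, so that $x_d \mapsto v_1$ \emph{regardless of its depth in $T$}, while the rest of $T$ is wound around $C$ inside $K$. A short structural comparison of $T_{x_d}$ and $T'_{x_d}$ (their vertex sets are consecutive intervals $X_{[d,e]}$ and $X_{[d,e']}$ with $e \neq e'$) then exhibits a specific edge of $T'$ --- or of $T$, in the symmetric case --- joining $x_d$ to a vertex lying outside $T_{x_d}$; its image is an edge from $v_1$ into $K$, forcing $v_1$ into the nontrivial component and yielding the contradiction. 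This targeted construction does real work that an appeal to an off\hyp{}the\hyp{}shelf ``dual lemma'' cannot replace.
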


\begin{proof}
Denote $Y := \Ytt$.
By the definition of $Y$, there exists $x_d \in X_n$ such that $T_{x_d} \neq T'_{x_d}$ and $h(T_{x_d}) = Y + 1 \leq h(T'_{x_d})$ or $h(T'_{x_d}) = Y + 1 \leq h(T_{x_d})$. We may assume, by changing the roles of $t$ and $t'$ if necessary, that $h(T_{x_d}) = Y + 1 \leq h(T'_{x_d})$.

By the definition of a DFS tree,
$V(T_{x_d}) = X_{[d,e]}$ and $V(T'_{x_d}) = X_{[d,e']}$ for some $e, e' \in \nset{n}$.
Assume that $\Nout[T]{x_d} = \{x_{i_1}, x_{i_2}, \dots, x_{i_\ell}\}$ with $d + 1 = i_1  < i_2 < \dots < i_\ell$; hence $V(T_{x_{i_j}}) = X_{[i_j, i_{j+1} - 1]}$ for $1 \leq j \leq \ell - 1$ and $V(T_{x_{i_\ell}}) = X_{[i_\ell, e]}$.
For all $x_{i_j} \in \Nout[T]{x_d}$ it holds that $h(T_{x_{i_j}}) \leq h(T_{x_d}) - 1 = Y$; hence $T_{x_{i_j}} = T'_{x_{i_j}}$ by the definition of $Y$.
For all $x_{i_j} \in \Nout[T]{x_d}$ with $i_j > e'$, we obviously have $x_{i_j} \notin V(T'_{x_d})$ and hence $x_{i_j} \notin \Nout[T']{x_d}$.
An easy inductive argument shows that $x_{i_j} \in \Nout[T']{x_d}$ for all $x_{i_j} \in \Nout[T]{x_d}$ with $i_j \leq e'$,

We must have $e \neq e'$.
(Suppose, to the contrary, that $e = e'$. Then $\Nout[T]{x_d} = \Nout[T']{x_d}$ and consequently $T_{x_d} = T'_{x_d}$, contradicting our assumptions.)
If $e < e'$, then $\Nout[T]{x_d} \subset \Nout[T']{x_d}$; in particular, $x_{e+1} \in \Nout[T']{x_d}$.
If $e > e'$, then $\Nout[T]{x_d} \supset \Nout[T']{x_d}$; in particular, $x_{e'+1} \in \Nout[T]{x_d}$.

Suppose, to the contrary, that $G$ has an outlet $W \colon v_0 \rightarrow v_1 \rightarrow \dots \rightarrow v_k$ with $k > Y + 1$.
Then $v_0$ belongs to a nontrivial strongly connected component $K$ and the remaining vertices of $W$ belong to trivial strongly connected components.
In particular, there exists a cycle $C$ in $K$ to which $v_0$ belongs.

Consider first the case when $e < e'$.
Let $W' \colon v_1 \rightarrow \dots \rightarrow v_k$, let $x_p$ be the parent of $x_d$ in $T$, and let $\varphi \colon X_n \to V(G)$ be the collapsing map of $(T,x_d)$ on $(C,W')$ satisfying $\varphi(x_p) = v_0$.
By Proposition~\PROPtwoone, $\varphi$ is a homomorphism of $T'$ into $G$.
Since $(x_d,x_{e+1})$ is an edge of $T'$, we have the edge $(\varphi(x_d), \varphi(x_{e+1})) \in E(G)$.
Since $\varphi(x_d) = v_1$ and $\varphi(x_{e+1})$ belongs to $C$, this implies that $v_1$ belongs to the strongly connected component $K$, a contradiction.

The case when $e > e'$ is treated similarly.
Let $W' \colon v_1 \rightarrow \dots \rightarrow v_k$, let $x_{p'}$ be the parent of $x_d$ in $T'$, and let $\varphi \colon X_n \to V(G)$ be the collapsing map of $(T',x_d)$ on $(C,W')$ satisfying $\varphi(x_{p'}) = v_0$.
Note that in this case $h(T'_{x_d}) = h(T_{x_d}) = Y+1 < k$, so it is indeed possible to collapse $T'_{x_d}$ on $v_1 \rightarrow \dots \rightarrow v_k$.
A similar argument as above now shows that $(\varphi(x_d), \varphi(x_{e'+1})) \in E(G)$, which implies that $v_1$ belongs to the strongly connected component $K$, a contradiction.
\end{proof}

\begin{definition}
\label{def:Z}
Let $t, t' \in B_n$, $t \neq t'$, and denote $T := G(t)$, $T' := G(t')$.
Let $\Ztt$ be the smallest nonnegative number $m$ such that there exists $x_i \in X_n$ with $T_{x_i} = T'_{x_i}$, $h(T_{x_i}) = h(T'_{x_i}) = m$, and
$x_i$ has distinct parents in $T$ and $T'$.
Such a number $m$ always exists (see Lemma~\ref{lem:subtrees-parents} below) and it must clearly be smaller than the heights of $T$ and $T'$.
Hence $0 \leq \Ztt < \Htt$.
\end{definition}

\begin{example}
For the DFS trees of Figure~\ref{fig:ex:YZ}, it holds that $\Ztt = 2$, as witnessed by the subtrees rooted at $x_{11}$.
\end{example}

The next lemma shows that the parameter $\Ztt$ is well defined:
for distinct DFS trees $T$ and $T'$ of size $n$, there always exists a vertex $x_i \in X_n$ such that $T_{x_i} = T'_{x_i}$ and $x_i$ has distinct parents in $T$ and $T'$.

\begin{lemma}
\label{lem:subtrees-parents}
Let $T$ and $T'$ be DFS trees of size $n$.
Assume that for all $x_i \in X_n \setminus \{x_1\}$, it holds that if $T_{x_i} = T'_{x_i}$ then $x_i$ has the same parent in $T$ and in $T'$.
Then $T = T'$.
\end{lemma}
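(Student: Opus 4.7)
The plan is to prove the stronger statement $T_{x_i} = T'_{x_i}$ for every $i \in \nset{n}$ by reverse induction on $i$; specializing to $i = 1$ yields $T = T'$ since $x_1$ is the common root of any DFS tree of size $n$. The crucial property of DFS trees I would exploit is that the descendants of $x_i$ carry labels strictly greater than $i$, so in particular every child has a larger label than its parent. This is exactly what makes reverse induction on the labeling usable.

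For the base case $i = n$, the largest-label vertex is forced to be a leaf in any DFS tree of size $n$, so $T_{x_n} = T'_{x_n}$ is the trivial one-vertex tree. For the inductive step with $i < n$, assuming $T_{x_j} = T'_{x_j}$ for every $j > i$, I would argue that $\Nout[T]{x_i} = \Nout[T']{x_i}$. Take any $x_j \in \Nout[T]{x_i}$; since $j > i$, the inductive hypothesis gives $T_{x_j} = T'_{x_j}$, and since $x_j \neq x_1$, the lemma's hypothesis then forces $x_j$ to have the same parent in $T'$ as in $T$, namely $x_i$. This yields $\Nout[T]{x_i} \subseteq \Nout[T']{x_i}$, and the symmetric argument (starting from a child of $x_i$ in $T'$) gives the reverse inclusion. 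Having identified the children and knowing inductively that each subtree below those children agrees in $T$ and $T'$, I conclude $T_{x_i} = T'_{x_i}$.

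I do not foresee a significant obstacle. The only subtlety worth pointing out is that the exclusion of $x_1$ from the hypothesis causes no trouble: $x_1$ is the root in both trees and so never appears as a child, so it is never the vertex we need to invoke the hypothesis on. The one ingredient I am relying on beyond the statement itself is the DFS property that child labels exceed parent labels, which makes the reverse-induction scheme go through cleanly.
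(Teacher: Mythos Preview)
Your proof is correct, and it takes a genuinely different route from the paper's. The paper argues by induction on the size $n$: it observes that $x_{n}$ is a leaf in both trees, so $T_{x_n}=T'_{x_n}$ forces $x_n$ to have the same parent $x_p$ in $T$ and $T'$; it then deletes $x_n$ to obtain DFS trees $\overline{T}$, $\overline{T}'$ of size $n-1$, checks that these smaller trees inherit the hypothesis of the lemma, applies the inductive hypothesis to get $\overline{T}=\overline{T}'$, and re-attaches $x_n$. Your argument instead fixes $n$ and runs a reverse induction on the vertex index $i$, showing $T_{x_i}=T'_{x_i}$ for all $i$ by first matching the children of $x_i$ (using the DFS property $j>i$ for children and the lemma's hypothesis) and then invoking the inductive hypothesis on the subtrees rooted at those children. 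Your approach is a bit slicker: it avoids the verification that the hypothesis descends to the truncated trees, which is where most of the work in the paper's proof lies. Both arguments rest on the same structural fact about DFS trees---that every child carries a strictly larger label than its parent---so neither uses anything the other does not.
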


\begin{proof}
We proceed by induction on $n$.
The statement obviously holds for $n = 1$ and $n = 2$.
Assume that the statement holds for all DFS trees of size $k$.
Let $T$ and $T'$ be DFS trees of size $k+1$ satisfying the condition
that for all $x_i \in X_{k+1} \setminus \{x_1\}$, if $T_{x_i} = T'_{x_i}$ then $x_i$ has the same parent in $T$ and $T'$.

Since $x_{k+1}$ is a leaf in both $T$ and $T'$, we have $T_{x_{k+1}} = T'_{x_{k+1}}$; hence $x_{k+1}$ has the same parent in $T$ and $T'$, say $x_p$.
Consider $\overline{T} := T \setminus \{x_{k+1}\}$, $\overline{T}' := T' \setminus \{x_{k+1}\}$.
Clearly $\overline{T}$ and $\overline{T}'$ are DFS trees of size $k$, and $T = \overline{T} + (x_p, x_{k+1})$ and $T' = \overline{T}' + (x_p, x_{k+1})$ (where the notation $\overline{T} + (x_p, x_{k+1})$ stands for adjoining a new vertex $x_{k+1}$ and a new edge $(x_p, x_{k+1})$ to $\overline{T}$).
Let $x_i \in X_k$ and assume that $\overline{T}_{x_i} = \overline{T}'_{x_i}$.
If $x_p \notin V(\overline{T}_{x_i}) = V(\overline{T}'_{x_i})$, then $T_{x_i} = \overline{T}_{x_i} = \overline{T}'_{x_i} = T'_{x_i}$.
If $x_p \in V(\overline{T}_{x_i}) = V(\overline{T}'_{x_i})$, then $T_{x_i} = \overline{T}_{x_i} + (x_p, x_{k+1}) = \overline{T}'_{x_i} + (x_p, x_{k+1}) = T'_{x_i}$.
In either case, our assumption on $T$ and $T'$ implies that $x_i$ has the same parent in $T$ and $T'$ and hence also in $\overline{T}$ and $\overline{T}'$.
Consequently, $\overline{T}$ and $\overline{T}'$ satisfy the condition of the inductive hypothesis, so $\overline{T} = \overline{T}'$.
Therefore, $T = \overline{T} + (x_p, x_{k+1}) = \overline{T}' + (x_p, x_{k+1}) = T'$.
\end{proof}

\begin{definition}
\label{def:ZG}
For a digraph $G$,
let $\ZG$ be the largest nonnegative integer $m$ such that there exist a strongly connected component $K$ of $G$ that is a whirl, a block $B$ of $K$, vertices $u, w \in B$ and a walk $u \rightarrow v_0 \rightarrow v_1 \rightarrow \dots \rightarrow v_m$ but $(w,v_0) \notin E(G)$.
If there is no finite upper bound on such numbers $m$, then define $\ZG := \infty$.
If no such number $m$ exists, then define $\ZG := -\infty$.
\end{definition}

\begin{example}
In the graph $G$ of Figure~\ref{fig:G-param}, vertices $u$ and $w$ belong to the same block of a whirl.
The path $u \rightarrow z_0 \rightarrow z_1$ and the non\hyp{}edge $(w, z_0)$ witness that $\ZG = 1$.
\end{example}

\begin{lemma}
\label{lem:Z}
Let $t, t' \in B_n$, $t \neq t'$, and
let $G$ be a digraph such that $\GA{G}$ satisfies the identity $t \approx t'$.
Assume that $u$ and $w$ are vertices belonging to the same block of a nontrivial strongly connected component.
If $u \rightarrow v_0 \rightarrow v_1 \rightarrow \dots \rightarrow v_{\Ztt}$ is a walk in $G$, then $w \rightarrow v_0$ is an edge.
Consequently, $\ZG < \Ztt$.
\end{lemma}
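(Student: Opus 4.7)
The plan is to construct a homomorphism $\varphi \colon T \to G$, where $T := G(t)$ and $T' := G(t')$, with the property that the edge $(x_{p'}, x_d) \in E(T')$ is carried to the desired edge $(w, v_0) \in E(G)$; Proposition~\PROPtwoone{} will then promote $\varphi$ to a homomorphism of $T'$ into $G$, delivering the conclusion. To set things up I would unfold Definition~\ref{def:Z} to pick $x_d \in X_n$ with $T_{x_d} = T'_{x_d}$, $h(T_{x_d}) = Z := \Ztt$, and distinct parents $x_p$ of $x_d$ in $T$ and $x_{p'}$ of $x_d$ in $T'$, and write $P := d_T(x_d)$ and $P' := d_T(x_{p'})$, so that $d_T(x_p) = P - 1$. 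The homomorphism $\varphi$ is then required to satisfy $\varphi(x_d) = v_0$, $\varphi(x_p) = u$, and $\varphi(x_{p'}) = w$.

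The homomorphism would be assembled by the collapsing-map machinery of Part~I, applied to $(T, x_d)$ on a pair $(W, W')$ of walks of $G$, exactly as in the proof of Lemma~\ref{lem:s}. For the inner walk I take $W' \colon v_0 \to v_1 \to \dots \to v_Z$, which exists by hypothesis and has length $h(T_{x_d}) = Z$. The outer walk $W$ is chosen to lie inside the nontrivial strongly connected component $K$ that contains $u$ and $w$, to have length at least $h(T)$, and to pass through $u$ at position $P - 1$ and through $w$ at position $P'$. The boundary compatibility $(\varphi(x_p), \varphi(x_d)) = (u, v_0) \in E(G)$ is the first edge of the given walk; since $p, p' < d$, both $x_p$ and $x_{p'}$ lie outside $V(T_{x_d})$, so they are mapped along $W$ to the stipulated positions, and the collapsing map is a homomorphism of $T$ into $G$ as soon as $W$ exists.

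The main obstacle is producing this outer walk $W$ inside $K$. Because $K$ is an $m$-whirl and $u, w$ lie in the same block of $K$, walks inside $K$ from $w$ to $u$ have length divisible by $m$, so $W$ exists provided $(P - 1) - P' \equiv 0 \pmod m$. This congruence I would derive by the same modular reasoning already used in the proof of Lemma~\ref{lem:s}: Lemma~\ref{lem:M and P} gives $\MG \divides \Mtt$ and hence $m \divides \Mtt$, while the definition of $\Mtt$ in \DEFfourtwo{} forces $d_T(x_i) \equiv d_{T'}(x_i) \pmod{\Mtt}$ for every $x_i$; applying this to $x_d$ and $x_{p'}$ together with $d_{T'}(x_{p'}) = d_{T'}(x_d) - 1$ yields $P - P' \equiv 1 \pmod m$, as desired. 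Strong connectedness of $K$ then supplies the initial segment of $W$ of length $P'$ ending at $w$ and a tail continuing from $u$ up to length $h(T)$. In borderline configurations where the prescribed positions of $u$ and $w$ on $W$ would coincide (while $u \neq w$), one applies the symmetric construction with $T$ and $T'$ interchanged, or collapses the subtree of $T$ containing $x_{p'}$ on a separately chosen walk inside $K$ terminating at $w$. Once the first assertion is established, the bound $\ZG < \Ztt$ is immediate from Definition~\ref{def:ZG}: any walk of length $Z$ witnessing $\ZG \geq Z$ would contradict the first assertion.
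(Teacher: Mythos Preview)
Your overall strategy---choose $x_d$ as in Definition~\ref{def:Z}, build a collapsing map of $(T,x_d)$, and read off the edge $(\varphi(x_{p'}),\varphi(x_d))=(w,v_0)$ from the $T'$-homomorphism---is exactly the paper's plan, and your modular computation $d_T(x_{p'})\equiv d_T(x_p)\pmod m$ is correct. The gap is in how you place $w$. You want a walk $W$ inside the $m$-whirl $K$ with $W[P-1]=u$ and $W[P']=w$, and you note this needs $(P-1)-P'\equiv 0\pmod m$. But the congruence is also \emph{satisfied} when $P'=P-1$, and then your two constraints collide: the same position of $W$ would have to equal both $u$ and $w$. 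This case genuinely occurs (nothing prevents $d_T(x_{p'})=d_T(x_p)$; for instance $x_p$ and $x_{p'}$ can be siblings in $T$), and neither of your escape hatches closes it. Interchanging $T$ and $T'$ runs into the symmetric obstruction $d_{T'}(x_p)=d_{T'}(x_{p'})$, which can hold simultaneously. ``Collapsing the subtree of $T$ containing $x_{p'}$ on a separately chosen walk'' is not something the collapsing-map machinery supports without further argument, and $T_{x_{p'}}$ may even contain $x_d$.

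The paper avoids this entirely by not trying to engineer $W$. It takes an $m$-cycle $C\subseteq K$ through $u$ and lets $\varphi$ be the collapsing map of $(T,x_d)$ on $(C,W')$; then automatically $\varphi(x_p)=\varphi(x_{p'})=u$ by the congruence. The key step you are missing is a one-vertex modification: define $\psi$ to agree with $\varphi$ except that $\psi(x_{p'})=w$. One checks that $x_{p'}$, its $T$-parent, and its $T$-children all lie outside $T_{x_d}$ (using $p'<d$, $T_{x_d}=T'_{x_d}$, and that $x_d$ is not a $T$-child of $x_{p'}$), so their $\varphi$-images sit on $C\subseteq K$. Since $u$ and $w$ lie in the same block of the $m$-whirl, they have identical in- and out-neighbours inside $K$, so replacing $u$ by $w$ at $x_{p'}$ preserves every $T$-edge. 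Hence $\psi$ is still a $T$-homomorphism, Proposition~\PROPtwoone{} makes it a $T'$-homomorphism, and $(w,v_0)=(\psi(x_{p'}),\psi(x_d))\in E(G)$. This single-vertex swap is what makes the argument go through uniformly, including when $d_T(x_{p'})=d_T(x_p)$.
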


\begin{proof}
Denote $M := \Mtt$, $Z := \Ztt$.
By Lemma~\LEMfoureight, the strongly connected component $K$ containing $u$ and $w$ is an $m$\hyp{}whirl for some divisor $m$ of $M$; let $B_a$ be the block containing $u$ and $w$.
By the definition of $Z$, there exists $x_d \in X_n$ such that $T_{x_d} = T'_{x_d}$, $h(T_{x_d}) = Z$, and the parent $x_p$ of $x_d$ in $T$ is distinct from the parent $x_q$ of $x_d$ in $T'$.
Observe that $d_T(x_p) = d_T(x_d) - 1 \equiv d_{T'}(x_d) - 1 = d_{T'}(x_q) \equiv d_T(x_q) \pmod{M}$; hence also $d_T(x_p) \equiv d_T(x_q) \pmod{m}$.
Let $C$ be a cycle of length $m$ in $K$ containing the vertex $u$.
Let $W \colon v_0 \rightarrow v_1 \rightarrow \dots \rightarrow v_Z$, and let $\varphi \colon X_n \to V(G)$ be the collapsing map of $(T,x_d)$ on $(C,W)$ with $\varphi(x_p) = u$ (also $\varphi(x_q) = u$).
Let $\psi \colon X_n \to V(G)$ be the map that coincides with $\varphi$ everywhere except at $x_q$ and satisfies $\psi(x_q) = w$.
Moreover, since $T_{x_d} = T'_{x_d}$, the vertex $x_q$ lies outside of $T_{x_d}$ and so do its children in $T$ (because $x_d$ is not a child of $x_q$ in $T$) and its parent in $T$
(because if the parent of $x_q$ lay in $T_{x_d}$, then so would $x_q$, as $T_{x_d}$ is closed under descendants).
Therefore the images of these vertices under $\varphi$ lie in $K$ (actually in $C$).
Since $u$ and $w$ belong to the same block $B_a$, the inneighbours (outneighbours, resp.)\ of $u$ and $w$ within $K$ are the same.
Consequently, $\psi$ is a homomorphism of $T$ into $G$, so, by Proposition~\PROPtwoone, $\psi$ is a homomorphism of $T'$ into $G$; hence $(w, v_0) = (\psi(x_q), \psi(x_d)) \in E(G)$.
\end{proof}

\begin{definition}
\label{def:BG}
For a digraph $G$,
let $\BG$ be the largest integer $m$ such that there exist a walk $v_0 \rightarrow v_1 \rightarrow \dots \rightarrow v_m$ and edges $v_m \rightarrow v_{m+1}$, $v_m \rightarrow v'_{m+1}$ such that $v_{m+1}$ and $v'_{m+1}$ belong to distinct nontrivial strongly connected components.
If there is no finite upper bound on such numbers $m$, then define $\BG := \infty$.
If no such number $m$ exists, then define $\BG := -\infty$.
\end{definition}

\begin{example}
In the graph $G$ of Figure~\ref{fig:G-param}, the path $b_0 \rightarrow b_1 \rightarrow b_2$ and the edges $b_2 \rightarrow v$ and $b_2 \rightarrow v'$ witness that $\BG = 2$.
\end{example}

\begin{lemma}
\label{lem:L+1}
Let $t, t' \in B_n$, $t \neq t'$, and
let $G$ be a digraph such that $\GA{G}$ satisfies the identity $t \approx t'$.
Denote $L := \Ltt$.
If $v_0 \rightarrow v_1 \rightarrow \dots \rightarrow v_{L+1}$ is a walk and $v_L \rightarrow v'_{L+1}$ is an edge in $G$ such that $v_{L+1}$ and $v'_{L+1}$ belong to nontrivial strongly connected components $K$ and $K'$, respectively, then $K = K'$.
Consequently, $\BG < \Ltt$.
\end{lemma}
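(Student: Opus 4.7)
The plan is to reduce the statement to Lemma~\ref{lem:s} by extending the given short walks to the required length $H := \Htt$ using cycles in the two nontrivial strongly connected components $K$ and $K'$.

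First I would set $H := \Htt$ and observe that we already have the initial segment $v_0 \rightarrow v_1 \rightarrow \dots \rightarrow v_{L+1}$. Since $v_{L+1}$ lies in the nontrivial strongly connected component $K$, there is a cycle through $v_{L+1}$ in $K$, along which we can walk $v_{L+1}$ back to itself. Appending this cycle to our initial segment (and iterating if needed), we obtain a walk $v_0 \rightarrow v_1 \rightarrow \dots \rightarrow v_H$ of length exactly $H$. Analogously, from the edge $v_L \rightarrow v'_{L+1}$ together with a cycle in $K'$ through $v'_{L+1}$, we build a walk $v_L \rightarrow v'_{L+1} \rightarrow v'_{L+2} \rightarrow \dots \rightarrow v'_H$ of the correct form.

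Now Lemma~\ref{lem:s} applies to these two walks and yields, in particular, that $v_{L+1}$ and $v'_{L+1}$ belong to the same nontrivial strongly connected component. Since $v_{L+1} \in K$ and $v'_{L+1} \in K'$, this forces $K = K'$.

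For the consequence $\BG < \Ltt$, I would argue by contradiction. Suppose $\BG \geq L$. By Definition~\ref{def:BG} there exist a walk $v_0 \rightarrow v_1 \rightarrow \dots \rightarrow v_L$ and edges $v_L \rightarrow v_{L+1}$, $v_L \rightarrow v'_{L+1}$ with $v_{L+1}$ and $v'_{L+1}$ lying in \emph{distinct} nontrivial strongly connected components. Concatenating the walk with the first edge gives $v_0 \rightarrow \dots \rightarrow v_L \rightarrow v_{L+1}$, which together with the edge $v_L \rightarrow v'_{L+1}$ meets all hypotheses of the first part of the lemma. That part then forces the two components to coincide, a contradiction; hence $\BG < L = \Ltt$. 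The only mildly delicate point is verifying that the cycle extensions of the two walks can be performed independently and that no vertex identifications are implicitly required beyond those already given, but this is immediate since Lemma~\ref{lem:s} imposes no constraint on intermediate vertices of the two extending walks.
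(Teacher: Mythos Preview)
Your proposal is correct and follows essentially the same route as the paper's own proof: extend the two given walks to length $H$ using cycles in $K$ and $K'$, then invoke Lemma~\ref{lem:s} to conclude that $v_{L+1}$ and $v'_{L+1}$ lie in the same strongly connected component. The paper's proof is simply a terser version of what you wrote, and it leaves the ``Consequently, $\BG < \Ltt$'' part implicit; your contrapositive argument for that part is fine (a walk of length $m \geq L$ witnessing $\BG \geq L$ can be truncated to its final $L$ steps to match the hypotheses exactly).
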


\begin{proof}
Denote $H := \Htt$, $L := \Ltt$.
Using the given walks and vertices of $K$ and $K'$, we can build walks $v_0 \rightarrow \dots \rightarrow v_H$ and $v_L \rightarrow v'_{L+1} \rightarrow \dots \rightarrow v'_H$.
By Lemma~\ref{lem:s}, $v_{L+1}$ and $v'_{L+1}$ belong to the same strongly connected component, i.e., $K = K'$.
\end{proof}

\begin{definition}
\label{def:omega}
Let $t, t' \in B_n$, $t \neq t'$, and denote $T := G(t)$, $T' := G(t')$.
Let
\begin{align*}
\Deltatt & := \{x \in X_n \mid T_x \neq T'_x\}, \\
\OMEGAtt & :=
\bigl\{ (d_T(x), h(T_x)), (d_{T'}(x), h(T'_x)) \bigm| x \in \Deltatt \bigr\}, \\
\xi_{t,t'} & := \min \{ d + h \mid (d,h) \in \OMEGAtt \},
\end{align*}
and define the map $\omegatt \colon \IN \to \IN$ by the rule
\[
\omegatt(r) :=
\begin{cases}
\min \{ d + h \mid \text{$(d,h) \in \OMEGAtt$ and $d \leq r$} \}, & \text{if $r < \xi_{t,t'}$,} \\
\xi_{t,t'}, & \text{if $r \geq \xi_{t,t'}$.}
\end{cases}
\]
Note that $\omegatt(0) = \Htt$ and $\omegatt(r) > \Ltt$ for all $r \in \IN$.
Moreover, $\omegatt$ is a nonincreasing function, and
we may specify $\omegatt$ by writing down the first few values of $\omegatt$ until $\xi_{t,t'}$ is reached.
\end{definition}

\begin{example}
\label{ex:omega-lambda}
Figure~\ref{fig:ex:omega-lambda} shows two DFS trees corresponding to certain terms $t, t' \in B_{20}$.
Note that $\Ltt = 2$.
It is easy to verify that
\begin{align*}
\Deltatt &= \{ x_1, x_2, x_3, x_4, x_5, x_7, x_8, x_9, x_{10}, x_{11}, x_{16}, x_{17}, x_{18}\},
\\
\OMEGAtt &=
\{
\underbrace{(0,7), (0,6)}_{x_1},
\underbrace{(1,3), (1,4)}_{x_2},
\underbrace{(2,2), (2,3)}_{x_3},
\underbrace{(3,1), (3,2)}_{x_4},
\underbrace{(4,0), (4,1)}_{x_5},
\\ & \phantom{{}= \{}
\underbrace{(1,6), (1,5)}_{x_7},
\underbrace{(2,5), (2,4)}_{x_8},
\underbrace{(3,4), (3,3)}_{x_9},
\underbrace{(4,3), (4,2)}_{x_{10}},
\underbrace{(5,2), (5,0)}_{x_{11}},
\\ & \phantom{{}= \{}
\underbrace{(2,3), (2,2)}_{x_{16}},
\underbrace{(3,0), (3,1)}_{x_{17}},
\underbrace{(3,2), (4,0)}_{x_{18}} \},
\\
\xi_{t,t'} & = 3,
\end{align*}
whence $\omegatt \colon \IN \to \IN$ is the map $0 \mapsto 6$, $1 \mapsto 4$, $2 \mapsto 4$, $i \mapsto 3$ for $i \geq 3$,
or, using the shorthand, $\omegatt = (6,4,4,3,\dots)$.
\end{example}

\begin{figure}
\begin{center}
\tikzset{every node/.style={circle,draw,inner sep=1.5,fill=black}, every path/.style={->,>=stealth,thick}}
\scalebox{0.66}{
\begin{tikzpicture}[scale=1, baseline=(x1)]
\node[label=below:{$x_1$}] (x1) at (0,0) {};
\node[label=left:{$x_2$}] (x2) at (-1,1) {};
\node[label=left:{$x_3$}] (x3) at (-2,2) {};
\node[label=left:{$x_4$}] (x4) at (-2,3) {};
\node[label=left:{$x_5$}] (x5) at (-2.5,4) {};
\node[label=right:{$x_6$}] (x6) at (-1.5,4) {};
\node[label=left:{$x_7$}] (x7) at (0,1) {};
\node[label=left:{$x_8$}] (x8) at (0,2) {};
\node[label=left:{$x_9$}] (x9) at (0,3) {};
\node[label=left:{$x_{10}$}] (x10) at (0,4) {};
\node[label=left:{$x_{11}$}] (x11) at (0,5) {};
\node[label=left:{$x_{12}$}] (x12) at (0,6) {};
\node[label=left:{$x_{13}$}] (x13) at (0,7) {};
\node[label=right:{$x_{14}$}] (x14) at (1,3) {};
\node[label=right:{$x_{15}$}] (x15) at (1,4) {};
\node[label=right:{$x_{16}$}] (x16) at (1.5,2) {};
\node[label=right:{$x_{17}$}] (x17) at (2,3) {};
\node[label=right:{$x_{18}$}] (x18) at (3,3) {};
\node[label=right:{$x_{19}$}] (x19) at (3,4) {};
\node[label=right:{$x_{20}$}] (x20) at (3,5) {};
\node[rectangle,draw=none,fill=none] () [below of=x1] {\Large $G(t)$};
\path (x1) edge (x2);
\path (x2) edge (x3);
\path (x3) edge (x4);
\path (x4) edge (x5);
\path (x4) edge (x6);
\path (x1) edge (x7);
\path (x7) edge (x8);
\path (x8) edge (x9);
\path (x9) edge (x10);
\path (x10) edge (x11);
\path (x11) edge (x12);
\path (x12) edge (x13);
\path (x8) edge (x14);
\path (x14) edge (x15);
\path (x7) edge (x16);
\path (x16) edge (x17);
\path (x16) edge (x18);
\path (x18) edge (x19);
\path (x19) edge (x20);
\end{tikzpicture}
}
\qquad
\scalebox{0.66}{
\begin{tikzpicture}[scale=1, baseline=(x1)]
\node[label=below:{$x_1$}] (x1) at (0,0) {};
\node[label=left:{$x_2$}] (x2) at (-1,1) {};
\node[label=left:{$x_3$}] (x3) at (-2,2) {};
\node[label=left:{$x_4$}] (x4) at (-2,3) {};
\node[label=left:{$x_5$}] (x5) at (-2,4) {};
\node[label=left:{$x_6$}] (x6) at (-2,5) {};
\node[label=left:{$x_7$}] (x7) at (0,1) {};
\node[label=left:{$x_8$}] (x8) at (0,2) {};
\node[label=left:{$x_9$}] (x9) at (0,3) {};
\node[label=left:{$x_{10}$}] (x10) at (0,4) {};
\node[label=left:{$x_{11}$}] (x11) at (-1,5) {};
\node[label=left:{$x_{12}$}] (x12) at (0,5) {};
\node[label=left:{$x_{13}$}] (x13) at (0,6) {};
\node[label=right:{$x_{14}$}] (x14) at (1,4) {};
\node[label=right:{$x_{15}$}] (x15) at (1,5) {};
\node[label=right:{$x_{16}$}] (x16) at (1.5,2) {};
\node[label=right:{$x_{17}$}] (x17) at (2,3) {};
\node[label=right:{$x_{18}$}] (x18) at (2,4) {};
\node[label=right:{$x_{19}$}] (x19) at (3,3) {};
\node[label=right:{$x_{20}$}] (x20) at (3,4) {};
\node[rectangle,draw=none,fill=none] () [below of=x1] {\Large $G(t')$};
\path (x1) edge (x2);
\path (x2) edge (x3);
\path (x3) edge (x4);
\path (x4) edge (x5);
\path (x5) edge (x6);
\path (x1) edge (x7);
\path (x7) edge (x8);
\path (x8) edge (x9);
\path (x9) edge (x10);
\path (x10) edge (x11);
\path (x10) edge (x12);
\path (x12) edge (x13);
\path (x9) edge (x14);
\path (x14) edge (x15);
\path (x7) edge (x16);
\path (x16) edge (x17);
\path (x17) edge (x18);
\path (x16) edge (x19);
\path (x19) edge (x20);
\end{tikzpicture}
}
\end{center}
\caption{DFS trees with
$\omegatt = (6, 4, 4, 3, \ldots)$
and $\lambdatt = 1$.}
\label{fig:ex:omega-lambda}
\end{figure}
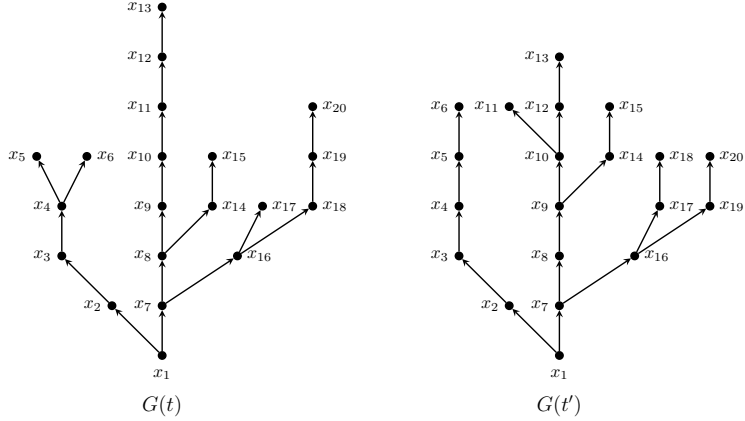

\begin{definition}
\label{def:omegaG}
Let $G$ be a digraph.
For $\ell, r \in \IN$ with $\ell \geq r \geq 1$,
let $\omegaG(\ell,r)$ be the largest integer $m$ such that there exist a walk $v_0 \rightarrow v_1 \rightarrow \dots \rightarrow v_\ell$, where $v_\ell$ belongs to a nontrivial strongly connected component, and a walk $v_{r-1} \rightarrow v'_r \rightarrow v'_{r+1} \rightarrow \dots \rightarrow v'_m$ such that $v'_\ell$ belongs to a trivial strongly connected component.
If there is no finite upper bound on such numbers $m$, then define $\omegaG(\ell,r) := \infty$.
If no such number $m$ exists, then define $\omegaG(\ell,r) := -\infty$.
Note that $\omegaG(\ell,r) \geq \ell + \OG - 1$ whenever $\OG \geq 1$ (if $o_0 \rightarrow o_1 \rightarrow \dots \rightarrow o_{\OG}$ is an outlet of length $\OG \geq 1$, then consider a walk $v_0 \rightarrow v_1 \rightarrow \dots \rightarrow v_\ell$ going around the strongly connected component of $o_0$ so that $v_{\ell-1} = o_0$ and the walk $v_{r-1} \rightarrow \dots \rightarrow v_{\ell-1} \rightarrow o_1 \rightarrow \dots \rightarrow o_{\OG}$).
\end{definition}

\begin{example}
It is not difficult to verify that for the graph $G$ of Figure~\ref{fig:G-param}, the parameter $\omegaG(\ell,r)$ has the value presented in the table in Figure~\ref{fig:G-param}.
For the values not shown in the table, that is, for $\ell, r \in \IN$ such that $\ell \geq 6$ and $\ell \geq r \geq 1$, it holds that $\omegaG(\ell,r) = \ell + 2$.
\end{example}

\begin{lemma}
\label{lem:omega}
Let $t, t' \in B_n$, $t \neq t'$, and
let $G$ be a digraph such that $\GA{G}$ satisfies the identity $t \approx t'$.
Denote $L := \Ltt$, $\omega := \omegatt$.
If $v_0 \rightarrow v_1 \rightarrow \dots \rightarrow v_{L+1}$ is a walk in $G$ such that $v_{L+1}$ belongs to a nontrivial strongly connected component, $r \in \{1, \dots, L+1\}$, and $v_{r-1} \rightarrow v'_r \rightarrow v'_{r+1} \rightarrow \dots \rightarrow v'_{\omega(r)}$ is a walk in $G$ \textup{(}recall that $\omega(r) \geq L + 1$\textup{)}, then $v'_{L+1}$ belongs to a nontrivial strongly connected component.
Consequently, $\omegaG(\Ltt + 1, r) < \omegatt(r)$ for all $r \in \{1, \dots, \Ltt + 1\}$.
\end{lemma}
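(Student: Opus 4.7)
The plan is to argue by contradiction: suppose that $v'_{L+1}$ lies in a trivial strongly connected component, and derive a contradiction. Following the pattern of Lemmas~\ref{lem:s}, \ref{lem:Y}, and \ref{lem:Z}, I would construct a collapsing map $\varphi$ that is a homomorphism $T\to G$ (and hence, by Proposition~\PROPtwoone, also $T'\to G$), and then use its action on a suitable root-to-leaf path in $T'$ to produce a walk of length $H:=\Htt$ in $G$ to which Lemma~\LEMfourfour{} applies.

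\textbf{Choice of witness and construction of $\varphi$.} By the definition of $\omega:=\omegatt$, some pair $(d,h)\in\OMEGAtt$ attains $\omega(r)$ subject to $d\leq r$; swapping the roles of $t$ and $t'$ if necessary, I may assume $(d,h)=(d_T(x_d),h(T_{x_d}))$ for some $x_d\in\Deltatt$. Let $K$ be the nontrivial strongly connected component containing $v_{L+1}$, and extend $v_0\to\dots\to v_{L+1}$ within $K$ to a walk $\tilde W\colon v_0\to\dots\to v_H$. Write $u_i$ for the $i$\hyp{}th vertex of the walk $v_0\to\dots\to v_{r-1}\to v'_r\to\dots\to v'_{\omega(r)}$, so that $u_i=v_i$ for $i\leq r-1$ and $u_i=v'_i$ for $i\geq r$. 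Define
\[
\varphi(x):=\begin{cases}v_{d_T(x)}, & x\notin V(T_{x_d}),\\ u_{d_T(x)}, & x\in V(T_{x_d}).\end{cases}
\]
A direct check of each edge of $T$ shows that $\varphi$ is a homomorphism $T\to G$ (the only nontrivial case is the edge from the parent of $x_d$ to $x_d$, which maps to the edge $v_{d-1}\to v_d$ of $\tilde W$ when $d<r$ and to $v_{r-1}\to v'_r$ when $d=r$); Proposition~\PROPtwoone{} then promotes $\varphi$ to a homomorphism $T'\to G$.

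\textbf{Producing the walk of length $H$ and concluding.} Since $d\leq r\leq L+1\leq d+h=\omega(r)$, there is a descendant $y$ of $x_d$ in $T$ with $d_T(y)=L+1$, and for any such $y$ we have $\varphi(y)=u_{L+1}=v'_{L+1}$. The next step is to choose such a $y$ that simultaneously lies on a longest root-to-leaf path in $T'$ at depth $L+1$, that is, with $d_{T'}(y)=L+1$ and $h(T'_y)=H-L-1$; the image under $\varphi$ of this $T'$\hyp{}path is then a walk of length $H$ in $G$ whose vertex at position $L+1$ is $v'_{L+1}$. Applying Lemma~\LEMfourfour{} to this walk forces $v'_{L+1}$ onto a closed walk, hence into a nontrivial strongly connected component, contradicting the hypothesis. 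The consequence $\omegaG(\Ltt+1,r)<\omegatt(r)$ follows at once from the definitions.

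\textbf{Main obstacle.} The crux is producing such a vertex $y$: it must realise depth $L+1$ in both $T$ and $T'$, lie in $V(T_{x_d})$, and sit on a longest root-to-leaf path in $T'$. The argument must exploit the DFS\hyp{}index intervals $V(T_{x_d})=X_{[d,e]}$ and $V(T'_{x_d})=X_{[d,e']}$, the extremality of $(d,h)$ among $\OMEGAtt$ pairs with root depth $\leq r$, the inequality $\omega(r)\geq L+1$, and the hypothesis $T_{x_d}\neq T'_{x_d}$. A case split according to the sign of $e-e'$ (analogous to the dichotomy in the proof of Lemma~\ref{lem:Y}), and according to how the pair $(d_{T'}(x_d),h(T'_{x_d}))$ compares to $r$ and to $\omega(r)$, appears unavoidable, and pinpointing the correct $y$ in each case is the main technical work.
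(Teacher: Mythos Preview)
Your overall strategy---collapsing map, promotion via Proposition~\PROPtwoone, then Lemma~\LEMfourfour---is exactly the paper's, and your construction of $\varphi$ is essentially the paper's (with one slip: $\tilde W$ must be extended to length $h(T)$, not merely $H$, since $d_T(x)$ can exceed $H$ for $x\notin V(T_{x_d})$).

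The gap is in the step you flag as the main obstacle, and it is more serious than you suggest. The vertex $y$ you ask for---a descendant of $x_d$ in $T$ with $d_T(y)=L+1=d_{T'}(y)$ that also lies on a \emph{longest} root-to-leaf path of $T'$---need not exist at all, and no case split will produce it. A concrete obstruction: take $T$ and $T'$ that agree on a long branch $x_1\to x_5\to\cdots\to x_n$ carrying all the height, and differ only inside a short subtree $T_{x_2}$ of height~$2$ (say $x_2\to x_3\to x_4$ in $T$ versus $x_2\to x_3$, $x_2\to x_4$ in $T'$). Here $L=1$, $\omega(1)=2$ is realized (after the swap you allow) by $x_d=x_2$, but the unique depth-$2$ vertex on the longest $T'$-path is $x_6\notin V(T_{x_d})$. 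So your image walk of length $H$ never passes through $v'_{L+1}$, and Lemma~\LEMfourfour{} gives no information about it.

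The paper does not try to thread a long $T'$-path through $T_{x_d}$; instead it exploits $\varphi\restriction T'_{x_d}$ locally to force either an edge from $W'$ back into $K$ or a closed walk among the $v'_j$, from which Lemma~\LEMfourfour{} then yields the conclusion. In your counterexample above this is immediate: the $T'_{x_d}$-path $x_2\to x_3\to x_4$ has $T$-depths $1,2,2$, so its $\varphi$-image produces the loop $v'_2\to v'_2$, placing $v'_{L+1}=v'_2$ in a nontrivial component. The full argument splits on whether $V(T_{x_d})\subsetneq V(T'_{x_d})$ (then $x_{e+1}$ furnishes an edge from $W'$ to $K$), or $V(T_{x_d})\supseteq V(T'_{x_d})$ with $h(T_{x_d})<h(T'_{x_d})$ (the pigeonhole closed-walk argument just illustrated), or $h(T_{x_d})\geq h(T'_{x_d})$; the last case itself bifurcates, and when $d_T(x_d)=L+1<d_{T'}(x_d)$ one must first manufacture an edge $(w,v'_{L+1})$ with $w\in K$ and then pass to \emph{new} collapsing maps $\psi,\psi'$ of $(T,x_d)$ and $(T',x_d)$ onto $(C,W')$ anchored at $w$, before rerunning the earlier subcases. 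Your plan does not anticipate these auxiliary homomorphisms, and they are essential.
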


\begin{proof}
Denote $H := \Htt$, $M := \Mtt$, $L := \Ltt$, $\omega := \omegatt$.
Let $K$ be the strongly connected component of $v_{L+1}$.
By Lemma~\LEMfoureight, $K$ is an $m$\hyp{}whirl for some divisor $m$ of $M$.
Let $B_a$ be the block of $K$ containing $v_{L+1}$, and let $B_{a-1}$ be the predecessor block of $B_a$.

If $\omega(r) \geq H$, then the claim follows immediately from Lemma~\LEMfourfour.
We can thus assume that $\omega(r) < H$.
By the definition of $\omega(r)$ and $\OMEGAtt$, there exists a vertex $x_d \in X_n$ such that $T_{x_d} \neq T'_{x_d}$, and either $d_T(x_d) \leq r$ and $d_T(x_d) + h(T_{x_d}) = \omega(r)$ or $d_{T'}(x_d) \leq r$ and $d_{T'}(x_d) + h(T'_{x_d}) = \omega(r)$;
moreover, for all $x_i \in X_n$ such that $T_{x_i} \neq T'_{x_i}$, it holds that $d_T(x_i) \leq r$ implies $d_T(x_i) + h(T_{x_i}) \geq \omega(r)$, and $d_{T'}(x_i) \leq r$ implies $d_{T'}(x_i) + h(T'_{x_i}) \geq \omega(r)$.
We may assume, by changing the roles of $t$ and $t'$ if necessary, that
$d_T(x_d) \leq r$ and $d_T(x_d) + h(T_{x_d}) = \omega(r)$.
Note that if $d_T(x_d) \leq L$ or $d_{T'}(x_d) \leq L$, then, by the definition of $L$, we have $d_T(x_d) = d_{T'}(x_d)$.
Since $d_T(x_d) \leq r \leq L + 1$,
it follows from our assumptions that
either
$d_T(x_d) = d_{T'}(x_d) \leq L + 1$ and $h(T_{x_d}) \leq h(T'_{x_d})$,
or
$d_T(x_d) = L + 1 < d_{T'}(x_d)$.

We are going to make use of the homomorphism $\varphi \colon T \to G$ that is defined as follows.
Fix an $m$\hyp{}cycle $C$ in $K$ that contains the vertex $v_{L+1}$, and let $W$ be a walk that starts with $v_0 \rightarrow v_1 \rightarrow \dots \rightarrow v_{L+1}$ and continues around $C$ until it reaches length $h(T)$.
Let $W'$ be the walk $v_{d_T(x_d)} \rightarrow \dots \rightarrow v_{r-1} \rightarrow v'_r \rightarrow v'_{r+1} \rightarrow \dots \rightarrow v'_{\omega(r)}$ if $d_T(x_d) < r$ and $v'_r \rightarrow v'_{r+1} \rightarrow \dots \rightarrow v'_{\omega(r)}$ if $d_T(x_d) = r$.
Note that $W'$ has length exactly $h(T_{x_d})$ because $d_T(x_d) + h(T_{x_d}) = \omega(r)$.
Let $\varphi \colon X_n \to V(G)$ be the collapsing map of $(T,x_d)$ on $(W,W')$.
By Proposition~\PROPtwoone, $\varphi$ is also a homomorphism of $T'$ into $G$.

We have $V(T_{x_d}) = X_{[d,e]}$ and $V(T'_{x_d}) = X_{[d,e']}$ for some $e, e' \in \nset{n}$.
Consequently $V(T_{x_d}) \subseteq V(T'_{x_d})$ (if $e \leq e'$) or $V(T'_{x_d}) \subseteq V(T_{x_d})$ (if $e' \leq e$).
We will consider several cases and subcases.

Case 1:
$V(T_{x_d}) \subsetneq V(T'_{x_d})$, i.e., $e < e'$.
Then necessarily $r = d_T(x_d) = L + 1$ and $x_{e+1} \in V(T'_{x_d}) \setminus V(T_{x_d})$;
note that $W'$ is the walk $v'_{L+1} \rightarrow \dots \rightarrow v'_{\omega(r)}$.
Let $x_p$ be the parent of $x_{e+1}$ in $T'$.
Then $d \leq p < e + 1$, so $x_p \in V(T_{x_d})$.
Moreover, since $x_{e+1}$ has different parents in $T$ and $T'$, we must have $d_T(x_{e+1}) \geq L + 1$ by the definition of $L$.
Since $\varphi \colon T' \to G$ is a homomorphism, we have $(\varphi(x_p), \varphi(x_{e+1})) \in E(G)$.
Since $x_p \in V(T_{x_d})$, we have $\varphi(x_p) \in \{v'_r, v'_{r+1}, \dots, v'_{\omega(r)}\}$; since $x_{e+1} \notin V(T_{x_d})$ and $d_T(x_{e+1}) \geq L + 1$, we have $\varphi(x_{e+1}) \in K$.
Now we can extend the walk $v_0 \rightarrow \dots \rightarrow v_L \rightarrow v'_{L+1} \rightarrow \dots \rightarrow \varphi(x_p) \rightarrow \varphi(x_{e+1})$ with vertices of $K$ so that we obtain a walk of length $H$, and Lemma~\LEMfourfour{} implies that $v'_{L+1}$ belongs to a nontrivial strongly connected component, in fact, to $K$ by Lemma~\LEMfournine.

Case 2:
$V(T_{x_d}) \supseteq V(T'_{x_d})$, i.e., $e \geq e'$.
Then $\varphi$ maps $V(T'_{x_d})$ on $W'$.

Case 2.1:
$h(T_{x_d}) < h(T'_{x_d}) =: h'$.
Let $x_d = u_0, u_1, \dots, u_{h'}$ be a longest path in $T'_{x_d}$.
Write $d_i := d_T(u_i)$ for $i \in \{0, \dots, h'\}$.
Since $h(T_{x_d}) < h(T'_{x_d})$, the sequence $d_0, d_1, \dots, d_{h'}$ cannot be strictly increasing, so there is an index $i \in \{0, \dots, h' - 1\}$ such that $d_i \geq d_{i+1}$; in fact, $d_{i+1} \geq L+1$ by the definition of $L$.
Then $(\varphi(u_i), \varphi(u_{i+1})) = (v'_{d_i}, v'_{d_{i+1}}) \in E(G)$, so $v'_{d_{i+1}} \rightarrow \dots \rightarrow v'_{d_i} \rightarrow v'_{d_{i+1}}$ is a closed walk in $G$.
It then follows easily from Lemma~\LEMfourfour{} that $v'_{L+1}$ belongs to a nontrivial strongly connected component.

Case 2.2: $h(T_{x_d}) \geq h(T'_{x_d})$.
Recall that
either
$d_T(x_d) = d_{T'}(x_d) \leq L + 1$ and $h(T_{x_d}) \leq h(T'_{x_d})$,
or
$d_T(x_d) = L + 1 < d_{T'}(x_d)$.
We consider separately these two cases.

Case 2.2.1: $d_T(x_d) = d_{T'}(x_d) \leq L + 1$ and $h(T_{x_d}) \leq h(T'_{x_d})$.
It follows from our assumptions that $h(T_{x_d}) = h(T'_{x_d})$.
If $V(T_{x_d}) \supsetneq V(T'_{x_d})$, then we can repeat the above argument with the roles of $t$ and $t'$ switched, and we will reach Case~1 and we are done.
We can now assume that $V(T_{x_d}) = V(T'_{x_d})$ (note that this holds if $d_T(x_d) = d_{T'}(x_d) \leq L$).
Observe that now the roles of $t$ and $t'$ are symmetric; we would reach this point in the argument even if $t$ and $t'$ were switched, and we may swap them if necessary.

Since $T_{x_d} \neq T'_{x_d}$, there exists an element $x_q \in V(T_{x_d})$ such that $d_T(x_q) \neq d_{T'}(x_q)$; assume that the index $q$ is the smallest possible.
Swapping the roles of $t$ and $t'$, if necessary, we may assume that $d_T(x_q) < d_{T'}(x_q)$; moreover $d_T(x_q) \geq L + 1$ by the definition of $L$.
Let $x_p$ be the parent of $x_q$ in $T'$.
Then $p < q$, so by the choice of $x_q$, we have $d_T(x_p) = d_{T'}(x_p) = d_{T'}(x_q) - 1 \geq d_T(x_q) \geq L + 1$.
Since $\varphi \colon T' \to G$ is a homomorphism, we have $(\varphi(x_p), \varphi(x_q)) = (v'_{d_p}, v'_{d_q}) \in E(G)$, where $d_p := d_T(x_p)$, $d_q := d_T(x_q)$.
Then $v'_{d_q} \rightarrow \dots \rightarrow v'_{d_p} \rightarrow v'_{d_q}$ is a closed walk in $G$.
It then follows easily from Lemma~\LEMfourfour{} that $v'_{L+1}$ belongs to a nontrivial strongly connected component.

Case 2.2.2: $d_T(x_d) = L + 1 < d_{T'}(x_d)$.
Since $1 \leq r \leq L+1$ and $d_T(x_d) \leq r$, we have $r = L+1$ in this case; therefore $W'$ is the walk $v'_{L+1} \rightarrow \dots \rightarrow v'_{\omega(r)}$.
Let $x_p$ be the parent of $x_d$ in $T'$.
Then $p < d$, so $x_p \notin V(T_{x_d})$, and
$d_T(x_p) \equiv d_{T'}(x_p) = d_{T'}(x_d) - 1 \equiv d_T(x_d) - 1 = L \pmod{M}$.
Moreover, $d_{T'}(x_p) \geq L + 1$, so also $d_T(x_p) \geq L + 1$ by the definition of $L$, and we have $w := \varphi(x_p) \in B_{a-1}$.
Since $\varphi \colon T' \to G$ is a homomorphism, we have $(\varphi(x_p), \varphi(x_d)) = (w, v'_{L+1}) \in E(G)$.

Define homomorphisms $\psi \colon T \to G$ and $\psi' \colon T' \to G$ as follows.
Let $\psi$ be the collapsing map of $(T,x_d)$ on $(C,W')$ that maps the parent of $x_d$ in $T$ to $w$, and
let $\psi'$ be the collapsing map of $(T',x_d)$ on $(C,W')$ that maps the parent of $x_d$ in $T'$ to $w$.

Recall that we are assuming that $V(T_{x_d}) \supseteq V(T'_{x_d})$ and $h(T_{x_d}) \geq h(T'_{x_d})$.
If $V(T_{x_d}) \supsetneq V(T'_{x_d})$, then using a similar argument as in Case~1 with the homomorphism $\psi'$ in place of $\varphi$, we can find an edge from $W'$ to $K$, from which it follows that $v'_{L+1}$ belongs to a nontrivial strongly connected component.
We can thus assume that $V(T_{x_d}) = V(T'_{x_d})$.
If $h(T_{x_d}) > h(T'_{x_d})$, then using a similar argument as in Case~2.1 with the homomorphism $\psi'$ in place of $\varphi$, we can find a closed walk in $W'$, from which it follows that $v'_{L+1}$ belongs to a nontrivial strongly connected component.
We can thus assume that $h(T_{x_d}) = h(T'_{x_d})$.
Now, using a similar argument as in Case~2.2.1 with the homomorphism $\psi$ or $\psi'$ in place of $\varphi$, we can find a closed walk in $W'$, from which it again follows that $v'_{L+1}$ belongs to a nontrivial strongly connected component.
\end{proof}

\begin{definition}
\label{def:lambda}
Let $t, t' \in B_n$, $t \neq t'$, and denote $T := G(t)$, $T' := G(t')$.
Let
\[
\Lambdatt := 
\bigl\{
x \in X_n \bigm|
d_T(x) \neq d_{T'}(x), \,
\Ltt + 1 \in \{d_T(x), d_{T'}(x)\}
\bigr\},
\]
and let
\[
\lambdatt :=
\min \bigl\{ \max (h(T_x), h(T'_x)) \bigm| x \in \Lambdatt \bigr\}.
\]
Note that $\Lambdatt \neq \emptyset$ by the definition of $\Ltt$; hence $\lambdatt$ is well defined and $\lambdatt \geq 0$.
\end{definition}

\begin{example}
For the DFS trees of Figure~\ref{fig:ex:omega-lambda}, it holds that
\begin{gather*}
\Lambdatt = \{x_{14}, x_{18}, x_{19}\},
\\
\lambdatt = \min \{ \underbrace{\max(1,1)}_{x_{14}}, \underbrace{\max(2,0)}_{x_{18}}, \underbrace{\max(1,1)}_{x_{19}} \} = \min \{1, 2, 1\}  = 1.
\end{gather*}
\end{example}

\begin{definition}
\label{def:lambdaG}
Let $G$ be a digraph.
Let $\lambdaG$ be the largest integer $m$ such that there exist an entryway $u_0 \rightarrow u_1 \rightarrow \dots \rightarrow u_{\EG}$ (of maximal length $\EG$) to a nontrivial strongly connected component $K$, a vertex $w$ in $K$ with $w \rightarrow u_{\EG}$ and a walk $v_0 \rightarrow v_1 \rightarrow \dots \rightarrow v_m$ such that exactly one of the pairs $(w,v_0)$ and $(u_{\EG-1},v_0)$ is an edge and the other is not.
If there is no upper bound for such numbers $m$, then define $\lambdaG := \infty$.
If no such number $m$ exists (this holds in particular when $\EG \leq 0$), then define $\lambdaG := -\infty$.
\end{definition}

\begin{example}
In the graph $G$ of Figure~\ref{fig:G-param}, the longest entryway $e_0 \rightarrow e_1 \rightarrow e_2 \rightarrow e_3 \rightarrow e_4$, the path $\lambda_0 \rightarrow \lambda_1$, the edges $v \rightarrow e_4$ and $v \rightarrow \lambda_0$ and the nonedge $(e_3, \lambda_0)$ witness that $\lambdaG = 1$.
\end{example}

\begin{lemma}
\label{lem:shortcut}
Let $t, t' \in B_n$, $t \neq t'$, and
let $G$ be a digraph such that $\GA{G}$ satisfies the identity $t \approx t'$.
Denote $L := \Ltt$, $\lambda := \lambdatt$.
Assume that $\EG = L + 1$,
$u_0 \rightarrow u_1 \rightarrow \dots \rightarrow u_L \rightarrow u_{L+1}$ is an entryway to a nontrivial strongly connected component $K$,
$w$ is a vertex in $K$ with $w \rightarrow u_{L+1}$,
and $v_0 \rightarrow v_1 \rightarrow \dots \rightarrow v_\lambda$ is a walk in $G$.
Then $w \rightarrow v_0$ is an edge if and only if $u_L \rightarrow v_0$ is an edge.
\textup{(}See Figure~\ref{fig:shortcut}.\textup{)}
Consequently, $\lambdaG < \lambdatt$.
\end{lemma}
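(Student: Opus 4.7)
The plan is to adapt the homomorphism\hyp{}transfer technique used in Lemmata~\ref{lem:s}, \ref{lem:Z}, and \ref{lem:omega}: pick a witness $x_d \in \Lambdatt$ realising $\lambda$, build a collapsing map between suitable walks, and exploit Proposition~\PROPtwoone{} to move the conclusion from one tree to the other.

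First I would choose $x_d \in \Lambdatt$ with $\max(h(T_{x_d}), h(T'_{x_d})) = \lambda$ and, by swapping $t$ with $t'$ if necessary, arrange $d_T(x_d) = L + 1 < d_{T'}(x_d)$. Let $x_p$ and $x_q$ be the parents of $x_d$ in $T := G(t)$ and $T' := G(t')$ respectively; then $d_T(x_p) = L$, and $x_q$ lies outside both $V(T_{x_d})$ and $V(T'_{x_d})$ since $q < d$. Setting $M := \Mtt$, the depth\hyp{}modulo\hyp{}$M$ invariance used in the proof of Lemma~\ref{lem:Z} gives $d_T(x_q) \equiv d_{T'}(x_q) = d_{T'}(x_d) - 1 \equiv L \pmod{M}$. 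By Lemma~\LEMfoureight, $K$ is an $m$\hyp{}whirl with $m \mid M$, so this congruence descends modulo $m$. Writing the blocks as $B_0, \dots, B_{m-1}$ with $u_{L+1} \in B_0$, we have $w \in B_{m-1}$, which is exactly the block into which the image of $x_q$ must land under the collapsing maps below.

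For the forward direction, I would assume $(u_L, v_0) \in E(G)$ and build a walk $W$ of length $h(T)$ beginning with $u_0 \rightarrow \dots \rightarrow u_L \rightarrow u_{L+1}$ and then winding around $K$ so that $W_{d_T(x_q)} = w$; this is possible because in an $m$\hyp{}whirl every vertex of $B_i$ has edges to every vertex of $B_{i+1}$, so any prescribed vertex of $B_{m-1}$ is reachable from $u_{L+1}$ in $m - 1 + km$ steps, and the congruence forces $d_T(x_q) \geq L + m$ as soon as $m \geq 2$. The collapsing map $\varphi$ of $(T, x_d)$ on $(W,\, v_0 \rightarrow v_1 \rightarrow \dots \rightarrow v_{h(T_{x_d})})$ is well\hyp{}defined because $h(T_{x_d}) \leq \lambda$ and because $(u_L, v_0)$ supplies the junction edge. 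As a homomorphism of $T$ into $G$, it is also a homomorphism of $T'$ into $G$ by Proposition~\PROPtwoone; evaluating on the edge $(x_q, x_d) \in E(T')$ yields $(\varphi(x_q), \varphi(x_d)) = (w, v_0) \in E(G)$. The reverse direction is entirely symmetric: build a walk $W$ of length $h(T')$ through $w$ at position $d_{T'}(x_q)$, take the collapsing map $\psi$ of $(T', x_d)$ on $(W,\, v_0 \rightarrow \dots \rightarrow v_{h(T'_{x_d})})$ with junction edge $(w, v_0)$, and extract $(u_L, v_0) = (\psi(x_p), \psi(x_d)) \in E(G)$ from the edge $(x_p, x_d) \in E(T)$.

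For the consequence $\lambdaG < \lambdatt$, any putative witness $v_0 \rightarrow \dots \rightarrow v_m$ with $m \geq \lambdatt$ would restrict to a length\hyp{}$\lambda$ prefix to which the just\hyp{}proved iff applies, forcing both $(w, v_0)$ and $(u_L, v_0)$ to exist or neither to exist, contradicting the ``exactly one edge'' clause in Definition~\ref{def:lambdaG}. The main technical hurdle I anticipate is constructing $W$ so that it visits $w$ at exactly position $d_T(x_q)$ (resp.\ $d_{T'}(x_q)$): this depends both on the modular congruence and on having enough room in the whirl. The borderline case $m = 1$, where the block partition degenerates and $d_T(x_q)$ could equal $L + 1$, should fall out easily by observing that in a $1$\hyp{}whirl every pair of internal vertices is joined by an edge, so any redirection of $W$ within the winding segment remains a valid walk.
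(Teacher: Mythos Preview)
Your proposal is correct and follows essentially the same route as the paper: pick a witness $x_d \in \Lambdatt$ realising $\lambda$, arrange $d_T(x_d) = L+1 < d_{T'}(x_d)$, and in each direction build a collapsing map on a walk that begins with the entryway and then winds through $K$, so that Proposition~\PROPtwoone{} transfers the parent edge $(x_q,x_d)$ (forward) or $(x_p,x_d)$ (reverse) into $G$. The only cosmetic difference is that the paper fixes an $m$-cycle $C$ in $K$ containing \emph{both} $w$ and $u_{L+1}$ and simply extends the entryway along $C$, which makes $\varphi(x_q) = w$ automatic (as $w$ is the unique vertex of $C$ in the predecessor block of $u_{L+1}$) and spares the explicit ``arrange $W_{d_T(x_q)} = w$'' bookkeeping and the separate $m=1$ discussion.
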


\begin{figure}
\begin{center}
\tikzset{every node/.style={circle,draw,inner sep=1.5,fill=black}, every path/.style={->,>=stealth,thick}}
\tikzstyle{katko}=[dash pattern=on 9pt off 3pt on 2pt off 3pt on 2pt off 3pt on 2 pt off 3pt on 10pt]
\scalebox{1}{
\begin{tikzpicture}[scale=1]
\draw[thin,fill opacity=0.2,draw opacity=0.5,draw=gray!60!black,fill=gray] (3.25,2.1) circle [x radius = 2.75, y radius = 1.25];
\node[label=south:{$u_0$}] (u0) at (0,0) {};
\node[label=south:{$u_1$}] (u1) at (1,0) {};
\node[label=south:{$u_L$}] (uL) at (2.5,0) {};
\node[label=west:{$u_{L+1}$}] (uL1) at (2.5,1.5) {};
\node[label=east:{$w$}] (w) at (4,1.5) {};
\node[label=south:{$v_0$}] (v0) at (4,0) {};
\node[label=south:{$v_1$}] (v1) at (5,0) {};
\node[label=south:{$v_\lambda$}] (vl) at (6.5,0) {};
\path (u0) edge (u1);
\path (u1) edge[katko] (uL);
\path (uL) edge (uL1);
\path (w) edge (uL1);
\path (v0) edge (v1);
\path (v1) edge[katko] (vl);
\path (w) edge[densely dotted] (v0);
\path (uL) edge[densely dotted] (v0);
\node[rectangle,draw=none,fill=none,rotate=45] (iff) at (3.6,0.4) {$\iff$};
\node[rectangle,draw=none,fill=none] (K) at (3.25,2.9) {$K$};
\end{tikzpicture}}
\end{center}
\caption{Illustration for Lemma~\ref{lem:shortcut}.}
\label{fig:shortcut}
\end{figure}
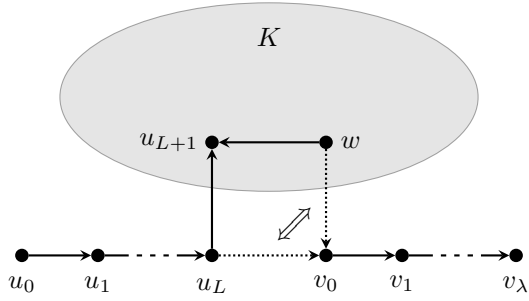

\begin{proof}
Denote $M := \Mtt$, $L := \Ltt$, $\lambda := \lambdatt$.
By the definition of $\lambda$, there exists an element $x_d \in X_n$ such that
$L + 1 \in \{d_T(x_d), d_{T'}(x_d)\}$, $d_T(x_d) \neq d_{T'}(x_d)$, and $\max(h(T_{x_d}), h(T'_{x_d})) = \lambda$.
By changing the roles of $t$ and $t'$ if necessary, we may assume that $d_T(x_d) = L + 1 < d_{T'}(x_d)$.
Let $x_p$ and $x_q$ be the parents of $x_d$ in $T$ and $T'$, respectively.
Then $d_T(x_p) = d_T(x_d) - 1 = L$,
$d_{T'}(x_q) = d_{T'}(x_d) - 1 \geq L + 1$ and $d_{T'}(x_q) = d_{T'}(x_d) - 1 \equiv d_T(x_d) - 1 = L \pmod{M}$.

Denote by $W$ the entryway $u_0 \rightarrow u_1 \rightarrow \dots \rightarrow u_L \rightarrow u_{L+1}$ and by $W'$ the walk $v_0 \rightarrow v_1 \rightarrow \dots \rightarrow v_\lambda$.
By Lemma~\LEMfoureight, $K$ is an $m$\hyp{}whirl for some divisor $m$ of $M$.
Let $C$ be an $m$\hyp{}cycle in $K$ that includes the vertices $w$ and $u_{L+1}$.

If $(u_L, v_0) \in E(G)$, then
let $W''$ be the walk that extends $W$ with vertices of $C$ to a walk of length $h(T)$, and consider the collapsing map $\varphi \colon X_n \to V(G)$ of $(T,x_d)$ to $(W'',W')$.
Observe that $\varphi(x_q) = w$.
(In order to see this, we need to verify that $x_q \notin T_{x_d}$, $d_T(x_q) \geq L + 1$ and $d_T(x_q) \equiv L \pmod{m}$.
The condition $x_q \notin T_{x_d}$ holds because $q < d$, as $x_q$ is the parent of $x_d$ in $T'$.
If $d_T(x_q) \leq L$, then $d_T(x_q) = d_{T'}(x_q)$ by the definition of $L$; hence $d_{T'}(x_q) \leq L$, which is a contradiction because we have seen that $d_{T'}(x_q) \geq L + 1$.
We have also seen that $d_{T'}(x_q) \equiv L \pmod{M}$, and $d_T(x_q) \equiv d_{T'}(x_q) \pmod{M}$ by the definition of $M$.
These imply $d_T(x_q) \equiv L \pmod{M}$, and then $d_T(x_q) \equiv L \pmod{m}$ follows, as $m \divides M$.)
By Proposition~\PROPtwoone, $\varphi$ is a homomorphism $T' \to G$, so
$(\varphi(x_q), \varphi(x_d)) = (w, v_0) \in E(G)$.

If $(w, v_0) \in E(G)$, then
let $W''$ be the walk that extends $W$ with vertices of $C$ to a walk of length $h(T')$, and consider the collapsing map $\varphi' \colon X_n \to V(G)$ of $(T',x_d)$ to $(W'',W')$.
Observe that $\varphi'(x_p) = u_L$.
(In order to see this, we need to verify that $d_{T'}(x_p) = L$.
We know that $d_T(x_p) = L$, so $d_T(x_p) = d_{T'}(x_p)$ by the definition of $L$.
From this it follows that $d_{T'}(x_p) = L$.)
By Proposition~\PROPtwoone, $\varphi'$ is a homomorphism $T \to G$, so
$(\varphi'(x_p), \varphi'(x_d)) = (u_L, v_0) \in E(G)$.
\end{proof}

\begin{remark}
Note that the walk $v_0 \rightarrow v_1 \rightarrow \dots \rightarrow v_\lambda$ in Lemma~\ref{lem:shortcut} may include vertices in the nontrivial strongly connected component $K$.
In particular, Lemma~\ref{lem:shortcut} asserts that if $G$ satisfies $t \approx t'$, $L := \Ltt$, $E_G = L + 1$, and
$u_0 \rightarrow u_1 \rightarrow \dots \rightarrow u_L \rightarrow u_{L+1}$ is an entryway,
then there is an edge $u_L \rightarrow v$ for every vertex $v$ in the block $B$ of $u_{L+1}$ in $K$.
This follows by choosing any vertex $w$ from the predecessor block of $B$ and taking $v_0 \rightarrow v_1 \rightarrow \dots \rightarrow v_\lambda$ to be any walk starting at $v$ and going around $K$ until it reaches length $\lambda$.
\end{remark}

We have established above several necessary conditions for a digraph to satisfy a bracketing identity.
We show next that these conditions are also sufficient.

\begin{theorem}
\label{thm:main-parameters}
Let $G$ be a digraph, and let $t, t' \in B_n$ with $t \neq t'$.
Then $\GA{G}$ satisfies the identity $t \approx t'$ if and only if the following conditions hold:
\begin{enumerate}[label={\upshape{(\roman*)}}]
\item\label{cond:whirl}
Every nontrivial strongly connected component of $G$ is a whirl.
\item\label{cond:no-SCC-SCC}
There is no path from a nontrivial strongly connected component of $G$ to another.
\item\label{cond:M}
$\MG \divides \Mtt$.
\item\label{cond:H}
$\PG < \Htt$.
\item\label{cond:L}
$\EG \leq \Ltt + 1$.
\item\label{cond:Y}
$\OG \leq \Ytt + 1$.
\item\label{cond:Z}
$\ZG < \Ztt$.
\item\label{cond:L+1}
$\BG < \Ltt$.
\item\label{cond:omega}
$\omegaG(\Ltt + 1, r) < \omegatt(r)$ for all $r \in \{1, \dots, \Ltt + 1\}$.
\item\label{cond:shortcut}
If $\EG = \Ltt + 1$, then $\lambdaG < \lambdatt$.
\end{enumerate}
\end{theorem}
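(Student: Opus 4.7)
The necessity direction is immediate from the results already established: conditions \ref{cond:whirl} and \ref{cond:no-SCC-SCC} follow from Lemmata~\LEMfoureight{} and \LEMfournine{} of Part~I, \ref{cond:M} and \ref{cond:H} are Lemma~\ref{lem:M and P}, and \ref{cond:L} through \ref{cond:shortcut} are precisely Lemmata~\ref{lem:L}, \ref{lem:Y}, \ref{lem:Z}, \ref{lem:L+1}, \ref{lem:omega}, and \ref{lem:shortcut}, respectively. The substantive direction is sufficiency, so assume conditions \ref{cond:whirl}--\ref{cond:shortcut} all hold.

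By Proposition~\PROPtwoone{}, it suffices to show that every homomorphism $\varphi \colon T \to G$ is also a homomorphism $T' \to G$ (where $T := G(t)$ and $T' := G(t')$); the converse then follows by swapping $t$ and $t'$, since all parameters and hypotheses are symmetric in the pair. Concretely, for each edge $(x_q, x_d) \in E(T') \setminus E(T)$, we must check that $(\varphi(x_q), \varphi(x_d)) \in E(G)$. For such an edge, let $x_p$ be the parent of $x_d$ in $T$, so $x_p \neq x_q$, and note that by the definition of $L := \Ltt$, both $d_T(x_d), d_{T'}(x_d) \geq L+1$ and both $d_T(x_q), d_{T'}(x_q) \geq L$.

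The core of the argument is a structural description of the image of $\varphi$. From condition~\ref{cond:H} any sufficiently long root-to-descendant path in $T$ has an image that must escape the pleasant region; from \ref{cond:L}, \ref{cond:no-SCC-SCC}, and \ref{cond:L+1} this escape happens through an entryway of length at most $L+1$ into a single nontrivial strongly connected component $K$, and the image is trapped in $K$ from that point on. Conditions \ref{cond:whirl}, \ref{cond:M}, and \ref{cond:Z} then guarantee that $K$ is an $m$\hyp{}whirl with $m \divides \Mtt$ and that any two vertices in the same block of $K$ are interchangeable as predecessors of walks of length at most $\Ztt$ reaching out of $K$. The parameter $\omegatt$ plays a decisive role: a vertex $x_i \in X_n$ with $d_T(x_i) + h(T_{x_i}) \geq \omegatt(d_T(x_i))$ must be mapped into a nontrivial strongly connected component, for otherwise condition~\ref{cond:omega} would be violated by the image of a longest descendant path from $x_i$.

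With this picture in hand, the verification $(\varphi(x_q), \varphi(x_d)) \in E(G)$ proceeds by case analysis on the depths $d_T(x_q), d_{T'}(x_q)$ and on the relative sizes of $h(T_{x_d})$ and $h(T'_{x_d})$, closely mirroring the case splits in Lemmata~\ref{lem:Z}, \ref{lem:L+1}, \ref{lem:omega}, and \ref{lem:shortcut}. When $\varphi(x_q)$ lies inside $K$, condition~\ref{cond:Z} supplies the missing edge via the block structure; when $\varphi(x_q)$ lies in the predecessor block of the block containing $\varphi(x_d)$, the whirl structure together with the congruence $d_T(x_q) \equiv d_{T'}(x_q) \pmod{\MG}$ does the job; when some relevant subtree height is small and $\varphi(x_d)$ could a priori lie in a trivial strongly connected component, condition~\ref{cond:Y} (i.e.\ $\OG \leq \Ytt + 1$) excludes that possibility. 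The main obstacle, and the precise reason for condition~\ref{cond:shortcut}, is the delicate boundary case $\EG = L+1$: here the entryway into $K$ is maximally long, so one cannot appeal to the whirl structure at $v_L$ directly and must instead invoke Lemma~\ref{lem:shortcut} to transport the required edge from a predecessor vertex in $K$ to $v_L$. Keeping the congruences modulo $\MG$ and $\Mtt$ aligned throughout this final subcase, while simultaneously ensuring that the walks used to realise $\omegaG(L+1, r)$ and $\lambdaG$ do not accidentally land inside $K$, is where the bookkeeping becomes most intricate.
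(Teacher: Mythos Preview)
Your necessity paragraph and the reduction via Proposition~\PROPtwoone{} are fine and match the paper. The sufficiency part, however, is only a sketch, and the sketch is organized in a way that does not line up with what actually has to be checked. Two points in particular.

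First, you propose to split cases ``on the depths $d_T(x_q), d_{T'}(x_q)$ and on the relative sizes of $h(T_{x_d})$ and $h(T'_{x_d})$, closely mirroring the case splits in Lemmata~\ref{lem:Z}, \ref{lem:L+1}, \ref{lem:omega}, \ref{lem:shortcut}.'' But those lemmata prove \emph{necessity}: each one fixes a convenient $x_d$ and builds a tailor-made homomorphism to force a contradiction. In the sufficiency direction you are handed an arbitrary $\varphi$ and an arbitrary edge of $T'$, and the relevant case distinction is not on tree parameters but on where $\varphi$ lands: is $\varphi(x_\ell)$ (the level-$L$ ancestor) in a nontrivial component or not, does the path $x_1\to\cdots\to x_\ell$ pass through one, which children of $x_\ell$ have images in $K$, etc. The paper accordingly organizes the proof around a level-$L$ vertex $x_\ell$ and its subtree $T_{x_\ell}=T'_{x_\ell}$ (same vertex set), and then runs through roughly ten subcases (1.1, 1.2, 2.1--2.3, 3.1.1, 3.1.2, 3.2.1, 3.2.2.1, 3.2.2.2) depending on which vertices of $T_{x_\ell}$ and of the root-path are mapped into a nontrivial component. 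Your edge-by-edge plan would have to rediscover this structure anyway, and several of the cases (notably 3.1.2 and 3.2.2.2) require knowing not just where $\varphi(x_q),\varphi(x_d)$ sit but where the \emph{entire} subtree $T_{v_1}$ at level $L+1$ is sent.

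Second, your use of condition~\ref{cond:omega} is overstated. You write that any $x_i$ with $d_T(x_i)+h(T_{x_i})\geq\omegatt(d_T(x_i))$ must be mapped into a nontrivial component. But \ref{cond:omega} only bounds $\omegaG(L+1,r)$, i.e.\ it speaks about walks that branch off at some level $r\leq L+1$ from a walk that \emph{does} reach a nontrivial component at level $L+1$. It says nothing about a vertex $x_i$ in isolation; the conclusion $T_{x_i}=T'_{x_i}$ you want only follows once you have exhibited a sibling branch whose image enters $K$ at the right level, and that is precisely what Cases~1.2, 3.1.2, and 3.2.2.2 in the paper set up before invoking~\ref{cond:omega}. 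Similarly, condition~\ref{cond:Y} is used not to ``exclude the possibility'' that $\varphi(x_d)$ is trivial, but to force $T_b=T'_b$ for any $b$ whose image lies on an outlet, so that the $T'$-edge into $b$ is already a $T$-edge (or can be handled via~\ref{cond:Z} when the $T$-parent differs). The bookkeeping you allude to in the final paragraph is real, but it is the content of the proof, not a detail one can defer.
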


\begin{proof}
Denote $T := G(t)$, $T' := G(t')$,
$H := \Htt$, $M := \Mtt$, $L := \Ltt$, $Y := \Ytt$, $Z := \Ztt$, $\omega := \omegatt$, $\lambda := \lambdatt$.
The necessity of the conditions is established in
Lemmata
\LEMfournine,
\ref{lem:M and P},
\ref{lem:L},
\ref{lem:Y},
\ref{lem:Z},
\ref{lem:L+1},
\ref{lem:omega},
\ref{lem:shortcut}.

For sufficiency, assume that the digraph $G = (V,E)$ and the bracketings $t, t' \in B_n$ satisfy the conditions.
In order to show that $\GA{G}$ satisfies the identity $t \approx t'$, it suffices, by Proposition~\PROPtwoone, to show that a map $\varphi \colon X_n \to V$ is a homomorphism of $T$ into $G$ if and only if it is a homomorphism of $T'$ into $G$.
So, assume that $\varphi \colon X_n \to V$ is a homomorphism of $T$ into $G$.
We need to verify that $\varphi$ is a homomorphism of $T'$ into $G$.

The image of any path in $T$ under $\varphi$ is a walk in $G$.
By conditions~\ref{cond:no-SCC-SCC}, \ref{cond:L} and \ref{cond:Y},
it is either a pleasant path,
or it comprises an entryway (of length at most $L+1$, possibly $0$) to a nontrivial strongly connected component $K$, followed by a winding walk in $K$, again followed by an outlet from $K$ (of length at most $Y+1$, possibly $0$).
Since $T$ contains a path of length $h(T) \geq H$, condition~\ref{cond:H} implies that
the image of $\varphi$ contains a vertex belonging to a nontrivial strongly connected component of $G$.

Our goal is to show that for any edge $(a,b)$ of $T'$, its image $(\varphi(a), \varphi(b))$ is an edge of $G$.
Since $T$ and $T'$ are identical up to level $L$, it holds that if $(a,b)$ is an edge of $T'$ with $d_{T'}(a) < L$, then $(a,b)$ is also an edge of $T$ and hence $(\varphi(a), \varphi(b)) \in E(G)$.
Therefore we can focus on edges $(a,b) \in E(T')$ with $d_{T'}(a) \geq L$.

Let $x_\ell \in X_n$ be an arbitrary vertex with $d_{T'}(x_\ell) = L$.
Then also $d_T(x_\ell) = L$ and $V(T_{x_\ell}) = V(T'_{x_\ell}) = X_{[\ell,\ell']}$ for some $\ell' \geq \ell$.
We will be done if we show that $(\varphi(a), \varphi(b)) \in E(G)$ holds for every edge $(a,b)$ of the rooted induced subtree $T'_{x_\ell}$.
The remainder of the proof is a case analysis.
The first case distinction is made according to which vertices of $T_{x_\ell}$, if any, are mapped to nontrivial strongly connected components.
Each case leads to several subcases.
Figure~\ref{fig:main-cases} illustrates several main cases and subcases, showing relevant parts of the tree $T$ and highlighting vertices that are mapped to nontrivial strongly connected components.

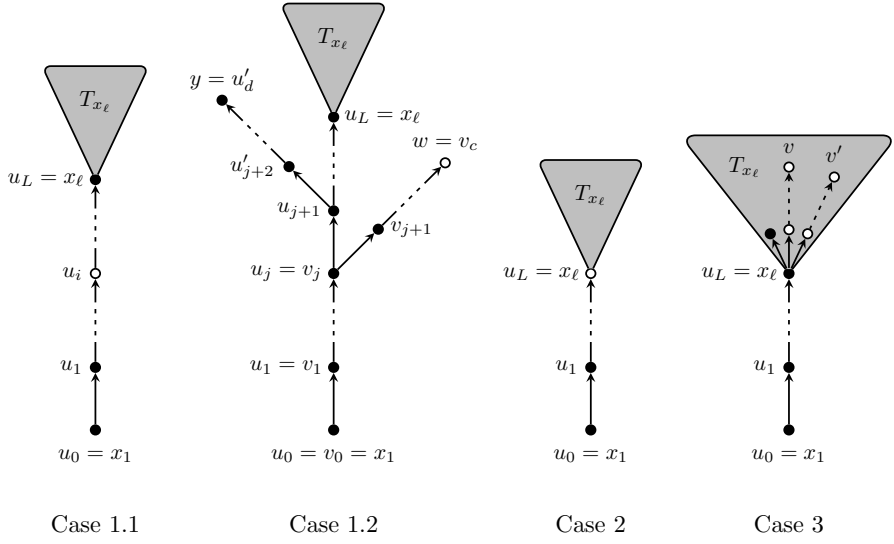
\begin{figure}
\begin{center}
\tikzset{every node/.style={circle,draw,inner sep=1.5,fill=black}, every path/.style={->,>=stealth,thick}}
\tikzstyle{katko}=[dash pattern=on 9pt off 3pt on 2pt off 3pt on 2pt off 3pt on 2 pt off 3pt on 10pt]
\scalebox{0.83}{
\begin{tikzpicture}[scale=1, baseline=(x1.center)]
\node[label={[label distance=-10pt]south:{$u_0 = x_1$}}] (u0) at (0,0) {};
\node[label=west:{$u_1$}] (u1) at (0,1) {};
\node[label=west:{$u_i$}, fill=none] (ui) at (0,2.5) {};
\node[label=west:{$u_L = x_\ell$}] (xL) at (0,4) {};
\path (u0) edge (u1);
\path (u1) edge[katko] (ui);
\path (ui) edge[katko] (xL);
\coordinate (Tl) at ($(xL) + (115:2)$);
\coordinate (Tr) at ($(xL) + (65:2)$);
\draw[rounded corners=4, -, fill=lightgray] (xL) -- (Tl) -- (Tr) -- (xL);
\node[rectangle,draw=none,fill=none] (TxL) at ($(xL) + (0,1.25)$) {$T_{x_\ell}$};
\node[rectangle,draw=none,fill=none] (Case) at (0,-1.5) {\large Case 1.1};
\end{tikzpicture}}
\,\,
\scalebox{0.83}{
\begin{tikzpicture}[scale=1, baseline=(x1.center)]
\node[label={[label distance=-21pt]south:{$u_0 = v_0 = x_1$}}] (u0) at (0,0) {};
\node[label=west:{$u_1 = v_1$}] (u1) at (0,1) {};
\node[label=west:{$u_j = v_j$}] (uj) at (0,2.5) {};
\node[label=west:{$u_{j+1}$}] (uj1) at (0,3.5) {};
\node[label=east:{$u_L = x_\ell$}] (xL) at (0,5) {};
\node[label=east:{$v_{j+1}$}] (vj1) at ($(uj)+(45:1)$) {};
\node[label={[label distance=-11pt]north:{$w = v_c$}}, fill=none] (w) at ($(vj1) + (45:1.5)$) {};
\node[label=west:{$u'_{j+2}$}] (uj2) at ($(uj1) + (135:1)$) {};
\node[label={[label distance=-11pt]north:{$y = u'_d$}}] (y) at ($(uj2) + (135:1.5)$) {};
\path (u0) edge (u1);
\path (u1) edge[katko] (uj);
\path (uj) edge (uj1);
\path (uj1) edge[katko] (xL);
\path (uj) edge (vj1);
\path (vj1) edge[katko] (w);
\path (uj1) edge (uj2);
\path (uj2) edge[katko] (y);
\coordinate (Tl) at ($(xL) + (115:2)$);
\coordinate (Tr) at ($(xL) + (65:2)$);
\draw[rounded corners=4, -, fill=lightgray] (xL) -- (Tl) -- (Tr) -- (xL);
\node[rectangle,draw=none,fill=none] (TxL) at ($(xL) + (0,1.25)$) {$T_{x_\ell}$};
\node[rectangle,draw=none,fill=none] (Case) at (0,-1.5) {\large Case 1.2};
\end{tikzpicture}}
\scalebox{0.83}{
\begin{tikzpicture}[scale=1, baseline=(x1.center)]
\node[label={[label distance=-10pt]south:{$u_0 = x_1$}}] (u0) at (0,0) {};
\node[label=west:{$u_1$}] (u1) at (0,1) {};
\node[label=west:{$u_L = x_\ell$}, fill=none] (xL) at (0,2.5) {};
\path (u0) edge (u1);
\path (u1) edge[katko] (xL);
\coordinate (Tl) at ($(xL) + (115:2)$);
\coordinate (Tr) at ($(xL) + (65:2)$);
\draw[rounded corners=4, -, fill=lightgray] (xL) -- (Tl) -- (Tr) -- (xL);
\node[rectangle,draw=none,fill=none] (TxL) at ($(xL) + (0,1.25)$) {$T_{x_\ell}$};
\node[rectangle,draw=none,fill=none] (Case) at (0,-1.5) {\large Case 2};
\end{tikzpicture}}
\,\,
\scalebox{0.83}{
\begin{tikzpicture}[scale=1, baseline=(x1.center)]
\node[label={[label distance=-10pt]south:{$u_0 = x_1$}}] (u0) at (0,0) {};
\node[label=west:{$u_1$}] (u1) at (0,1) {};
\node[label=west:{$u_L = x_\ell$}] (xL) at (0,2.5) {};
\coordinate (Tl) at ($(xL) + (128:2.8)$);
\coordinate (Tr) at ($(xL) + (52:2.8)$);
\draw[rounded corners=7, -, fill=lightgray] (xL) -- (Tl) -- (Tr) -- (xL);
\node (xr1) at ($(xL)+(115:0.7)$) {};
\node[fill=white] (xr2) at ($(xL)+(90:0.7)$) {};
\node[fill=white] (xr3) at ($(xL)+(65:0.7)$) {};
\node[label=north:{$v$}, fill=white] (xs2) at ($(xr2)+(90:1)$) {};
\node[label=north:{$v'$}, fill=white] (xs3) at ($(xr3)+(65:1)$) {};
\path (u0) edge (u1);
\path (u1) edge[katko] (xL);
\path (xL) edge (xr1);
\path (xL) edge (xr2);
\path (xL) edge (xr3);
\path (xr2) edge[katko, dash phase=7.5pt] (xs2);
\path (xr3) edge[katko, dash phase=7.5pt] (xs3);
\node[rectangle,draw=none,fill=none] (TxL) at ($(xL) + (-0.7,1.7)$) {$T_{x_\ell}$};
\node[rectangle,draw=none,fill=none] (Case) at (0,-1.5) {\large Case 3};
\end{tikzpicture}}
\end{center}
\caption{Various cases considered in the proof of Theorem~\ref{thm:main-parameters}.
Hollow vertices are mapped to a nontrivial strongly connected component of $G$.}
\label{fig:main-cases}
\end{figure}

Case 1:
Assume that $\varphi$ maps no vertex of $T_{x_\ell}$ to a nontrivial strongly connected component of $G$.
Let $x_1 =: u_0 \rightarrow u_1 \rightarrow \dots \rightarrow u_L := x_\ell$ be the path from $x_1$ to $x_\ell$ in $T'$ (equivalently, in $T$).
We make a further case distinction on whether any vertex on this path is mapped to a nontrivial strongly connected component.

Case 1.1:
Assume that there is an index $i \in \{0, \dots, L-1\}$ such that $\varphi(u_i)$ lies in a nontrivial strongly connected component of $G$.
It follows from condition~\ref{cond:Y} that $h(T_{x_\ell}) \leq Y$; hence $T_{x_\ell} = T'_{x_\ell}$ by the definition of $Y$.
Therefore $(\varphi(a),\varphi(b))$ is clearly an edge of $G$ for every edge $(a,b)$ of $T'_{x_\ell}$.

Case 1.2:
Assume that for all $i \in \{0, \dots, L-1\}$, $\varphi(u_i)$ belongs to a trivial strongly connected component.
Since the image of $\varphi$ contains a vertex belonging to a nontrivial strongly connected component of $G$, there exists an index $j \in \{0, \dots, L-1\}$ such that $T_{u_j}$ contains a vertex that is mapped by $\varphi$ to a nontrivial strongly connected component (at least $T_{x_1} = T_{u_0}$ satisfies this).
Assume that $j$ is the largest such index.
By condition~\ref{cond:L}, $T_{u_j}$ contains a vertex $w$ such that $\varphi(w)$ lies in a nontrivial strongly connected component $K$ and $c := d_T(w) \leq L+1$.
Let $x_1 =: v_0 \rightarrow v_1 \rightarrow \dots \rightarrow v_c$ be the path from $x_1$ to $w$ in $T$; note that $v_i = u_i$ for all $i \leq j$.
Then $\varphi(v_0) \rightarrow \varphi(v_1) \rightarrow \dots \rightarrow \varphi(v_c)$ is a walk in $G$.
Continuing this in a suitable way with vertices from $K$, we obtain a walk of length $L+1$ in $G$, the last vertex of which belongs to $K$.
Let then $y$ be a vertex of maximum depth in $T_{u_{j+1}}$, let $d := d_T(y)$, and consider the path $u_0 \rightarrow u_1 \rightarrow \dots \rightarrow u_{j+1} \rightarrow u'_{j+2} \rightarrow \dots \rightarrow u'_d$ from $x_1$ to $y$ in $T$.
By the choice of $j$, the walk $\varphi(u_0) \rightarrow \varphi(u_1) \rightarrow \dots \rightarrow \varphi(u_{j+1}) \rightarrow \varphi(u'_{j+2}) \rightarrow \dots \rightarrow \varphi(u'_d)$ is pleasant.
It follows from condition~\ref{cond:omega} that $d < \omega(j+1)$.
By the definition of $\omega$ and $\OMEGAtt$ we have $T_{u_{j+1}} = T'_{u_{j+1}}$ and hence $T_{x_\ell} = T'_{x_\ell}$, and it follows that $(\varphi(a),\varphi(b)) \in E(G)$ for every edge $(a,b)$ of $T'_{x_\ell}$.

Case 2:
Assume that $\varphi(x_\ell)$ belongs to a nontrivial strongly connected component $K$ of $G$.
By conditions~\ref{cond:whirl} and \ref{cond:M}, $K$ is an $m$\hyp{}whirl for a divisor $m$ of $M$.
By condition~\ref{cond:no-SCC-SCC}, $\varphi$ maps each vertex of $T_{x_\ell}$ to $K$ or to an outlet from $K$.
Let $(a,b)$ be an edge of $T'_{x_\ell}$.
We consider the different cases according to whether $a$ and $b$ are mapped to $K$ or not.

Case 2.1:
Assume that $\varphi(a) \notin K$.
Then $h(T_a) < \OG \leq Y + 1$ by condition~\ref{cond:Y}; therefore $T_a = T'_a$ by the definition of $Y$, so $(a,b) \in E(T)$ and hence $(\varphi(a), \varphi(b)) \in E(G)$.

Case 2.2:
Assume that $\varphi(a), \varphi(b) \in K$.
Since
$d_T(a) \equiv d_{T'}(a) = d_{T'}(b) - 1 \equiv d_T(b) - 1 \pmod{M}$,
the vertices $\varphi(a)$ and $\varphi(b)$ lie in consecutive blocks of the $m$\hyp{}whirl $K$.
Therefore $(\varphi(a), \varphi(b)) \in E(G)$.

Case 2.3:
Assume that $\varphi(a) \in K$ and $\varphi(b) \notin K$.
Again by condition~\ref{cond:Y}, we have $h(T_b) < \OG \leq Y + 1$ and therefore $T_b = T'_b$.
Let $c$ be the parent of $b$ in $T$; note that $c \in V(T_{x_\ell})$.
If $c = a$, then $(\varphi(a), \varphi(b)) = (\varphi(c), \varphi(b)) \in E(G)$ and we are done.
If $c \neq a$, then $h(T_b) \geq Z \geq 0$, so there exists a path $b =: v_0 \rightarrow v_1 \rightarrow \dots \rightarrow v_Z$ in $T$.
Then $\varphi(c) \rightarrow \varphi(b) \rightarrow \varphi(v_1) \rightarrow \dots \rightarrow \varphi(v_Z)$ is a walk in $G$.
We must also have $\varphi(c) \in K$. (Suppose, to the contrary, that $\varphi(c) \notin K$. Then $h(T_c) \leq Y$ by condition~\ref{cond:Y}; hence $T_c = T'_c$ by the definition of $Y$, so $(c,b)$ is an edge of both $T$ and $T'$. This contradicts the fact that $a$ is the unique parent of $b$ in $T'$.)
Moreover,
$d_T(a) \equiv d_{T'}(a) = d_{T'}(b) - 1 \equiv d_T(b) - 1 = d_T(c) \pmod{M}$.
Therefore $\varphi(a)$ and $\varphi(c)$ belong to the same block of the $m$\hyp{}whirl $K$, and
it now follows from condition~\ref{cond:Z} that $(\varphi(a), \varphi(b)) \in E(G)$.

Case 3:
Assume that $\varphi$ maps some vertices of $T_{x_\ell}$ to nontrivial strongly connected components of $G$ but $\varphi(x_\ell)$ belongs to a trivial strongly connected component.
If $v$ is a vertex of $T_{x_\ell}$ such that $\varphi(v) \in K$, where $K$ is a nontrivial strongly connected component, and $x_1 =: u_0 \rightarrow u_1 \rightarrow \dots \rightarrow u_L \rightarrow \dots \rightarrow u_q := v$ is the path from $x_1$ to $v$ in $T$, then $\varphi(u_i) \in K$ for all $i \in \{L+1, \dots, q\}$ by conditions~\ref{cond:no-SCC-SCC} and \ref{cond:L}.
Together with condition~\ref{cond:L+1}, this implies that if $v$ and $v'$ are vertices of $T_{x_\ell}$ such that $\varphi(v) \in K$, $\varphi(v') \in K'$, where $K$ and $K'$ are nontrivial strongly connected components, then $K = K'$.
So, let us assume that $K$ is the unique nontrivial strongly connected component with nonempty intersection with $\varphi(V(T_{x_\ell}))$.
By conditions~\ref{cond:whirl} and \ref{cond:M}, $K$ is an $m$\hyp{}whirl for a divisor $m$ of $M$.
Moreover, $\varphi(u_0) \rightarrow \varphi(u_1) \rightarrow \dots \rightarrow \varphi(u_{L+1})$ is an entryway of length $L+1$, so condition~\ref{cond:L} implies $\EG = L + 1$.
Now condition~\ref{cond:shortcut} in turn implies $\lambdaG < \lambda$.

Let $x_r \in V(T'_{x_\ell})$ with $d_{T'}(x_r) = L + 1$, i.e., $x_r$ is a child of $x_\ell$ in $T'$,
and let $x_\ell =: v_0 \rightarrow v_1 \rightarrow \dots \rightarrow v_z := x_r$ be the path from $x_\ell$ to $x_r$ in $T$.
We are going to show that $(\varphi(x_\ell),\varphi(x_r)) \in E(G)$ and that $(\varphi(a),\varphi(b)) \in E(G)$ for every edge $(a,b)$ of $T'_{x_r}$.
Since $x_r$ was chosen arbitrarily among the children of $x_\ell$, this will cover all edges of $T'_{x_\ell}$ and we will be done.
We consider different possibilities.

Case 3.1:
Assume that
$\varphi(x_r) \notin K$.

Case 3.1.1:
Assume that
$\varphi(v_i) \in K$ for some $i \in \{1, \dots, z-1\}$.
Then necessarily $z > 1$; hence $d_T(x_r) > L + 1$.
In particular, $\varphi(v_1) \in K$ by condition~\ref{cond:L} and $\varphi(x_r)$ lies on an outlet, so $h(T_{x_r}) \leq Y$ by condition~\ref{cond:Y}.
Consequently, $T_{x_r} = T'_{x_r}$ by the definition of $Y$;
therefore, $(\varphi(a),\varphi(b)) \in E(G)$ for every edge $(a,b)$ of $T'_{x_r}$.
It remains to show that $(\varphi(x_\ell),\varphi(x_r)) \in E(G)$.

Observe that also $\varphi(v_{z-1}) \in K$.
(Suppose, to the contrary, that $\varphi(v_{z-1}) \notin K$.
Then a similar argument as above shows that $T_{v_{z-1}} = T'_{v_{z-1}}$.
Recall that the parent of $x_r$ in $T'$ is $x_\ell$.
Since $z > 1$, we must have $v_{z-1} \neq x_\ell$.
Consequently, $(v_{z-1}, x_r) \notin E(T'_{v_{z-1}})$, which contradicts the fact that $(v_{z-1}, x_r) \in E(T_{v_{z-1}}) = E(T'_{v_{z-1}})$.)

This means that
\begin{align*}
d_T(v_{z-1}) &= d_T(x_r) - 1 \equiv d_{T'}(x_r) - 1 = d_{T'}(x_\ell) \\ &= L = d_T(x_\ell) = d_T(v_1) - 1 \pmod{M},
\end{align*}
so $\varphi(v_{z-1})$ and $\varphi(v_1)$ lie on consecutive blocks of $K$.
Since $d_{T'}(x_r) = L + 1 < d_T(x_r)$ and $T_{x_r} = T'_{x_r}$, we have
$\lambda \leq \max(h(T_{x_r}), h(T'_{x_r})) = h(T_{x_r})$
by the definition of $\lambda$.
Therefore there exists a path $x_r \rightarrow y_1 \rightarrow \dots \rightarrow y_\lambda$ in $T$, and its image $\varphi(x_r) \rightarrow \varphi(y_1) \rightarrow \dots \rightarrow \varphi(y_\lambda)$ is a walk of length $\lambda$ in $G$.
Since $\varphi(x_1) \rightarrow \dots \rightarrow \varphi(x_\ell) \rightarrow \varphi(v_1)$ is an entryway of length $L+1 = \EG$ and we have edges $(\varphi(v_{z-1}), \varphi(v_1)), (\varphi(v_{z-1}), \varphi(x_r)) \in E(G)$,
the inequality $\lambdaG < \lambda$
implies $(\varphi(x_\ell), \varphi(x_r)) \in E(G)$, as desired.

Case 3.1.2:
Assume that $\varphi(v_i) \notin K$ for all $i \in \{1, \dots, z-1\}$.
Then actually $\varphi(x) \notin K$ for every vertex $x \in V(T_{v_1})$ (for, if there were $x \in V(T_{v_1})$ such that $\varphi(x) \in K$, then, since $d_T(v_1) = L + 1 = \EG$, we would have $\varphi(v_1) \in K$, a contradiction).
There is, however, an edge $(x_\ell, y)$ in $T$ with $\varphi(y) \in K$, so condition~\ref{cond:omega} implies that $d_T(v_1) + h(T_{v_1}) \leq \omegaG(L+1,L+1) < \omega(L+1)$ because $d_T(v_1) = L+1$.
It follows from the definition of $\omega(L+1)$ that $(d_T(v_1), h(T_{v_1})) \notin \OMEGAtt$; hence $T_{v_1} = T'_{v_1}$.
We have $x_r \in V(T_{v_1})$.
The only rooted induced subtrees of $T'_{x_\ell}$ containing the vertex $x_r$ are $T'_{x_r}$ and $T'_{x_\ell}$; hence $v_1 = x_r$ or $v_1 = x_\ell$.
The case $v_1 = x_\ell$ is impossible
because $v_1$ is the vertex following $x_\ell$ on the path from $x_\ell$ to $x_r$ in $T$;
therefore $v_1 = x_r$.
Then $(\varphi(x_\ell), \varphi(x_r)) = (\varphi(x_\ell), \varphi(v_1)) \in E(G)$.
Furthermore,
$T_{v_1} = T'_{v_1}$ implies that
$(\varphi(a), \varphi(b)) \in E(G)$ for every edge $(a,b)$ of $T'_{v_1} = T'_{x_r}$.

Case 3.2:
Assume that
$\varphi(x_r) \in K$.
Then $\varphi(v_i) \in K$ for all $i \in \{1, \dots, z\}$.
We have
\[
d_T(x_r) \equiv d_{T'}(x_r) = d_{T'}(x_\ell) + 1 = d_T(x_\ell) + 1 = d_T(v_1) \pmod{M},
\]
so $\varphi(x_r)$ and $\varphi(v_1)$ are in the same block $B_i$ of $K$.
Let $w$ be a vertex in the predecessor block $B_{i-1}$; then $w \rightarrow \varphi(x_r)$ and $w \rightarrow \varphi(v_1)$ are edges.
Since $\varphi(x_1) \rightarrow \dots \rightarrow \varphi(x_\ell) \rightarrow \varphi(v_1)$ is an entryway of length $L+1 = \EG$ and since there certainly exists a walk of length $\lambda$ starting from $\varphi(x_r)$ (just walk along vertices of $K$),
the inequality $\lambdaG < \lambda$ implies that $(\varphi(x_\ell), \varphi(x_r)) \in E(G)$.

We are going to show that $\varphi$ maps $T'_{x_r}$ homomorphically into $G$.
We go through the vertices in $T'_{x_r}$ in depth\hyp{}first\hyp{}search order, and we show that every edge of $T'_{x_r}$ is mapped to an edge of $G$.
As we will see, it suffices to go along each branch of $T'_{x_r}$ only so far until we reach a vertex $v$ such that $\varphi(v) \notin K$; once such a vertex is reached, the induced subtree rooted at $v$ will automatically be mapped homomorphically into $G$.

So, let $(a,b) \in E(T'_{x_r})$ and assume that we have already shown that every vertex on the path $x_r \rightarrow \dots \rightarrow a$ in $T'$ is mapped into $K$ by $\varphi$ and every edge along this path is mapped to an edge of $G$.
In particular, $\varphi(a) \in K$.
Let $c$ be the parent of $b$ in $T$; $(c,b) \in E(T)$.
If $a = c$, then we clearly have $(\varphi(a), \varphi(b)) = (\varphi(c), \varphi(b)) \in E(G)$.
Assume from now on that $a \neq c$.
We need to consider several cases.

Case 3.2.1:
Assume that $\varphi(b) \in K$.
Then $d_T(a) \equiv d_{T'}(a) = d_{T'}(b) - 1 \equiv d_T(b) - 1$,
that is, $\varphi(a)$ and $\varphi(b)$ lie in consecutive blocks of $K$; then clearly $(\varphi(a), \varphi(b)) \in E(G)$.

Case 3.2.2:
Assume that $\varphi(b) \notin K$.

Case 3.2.2.1:
Assume that $\varphi(c) \in K$.
Then $\varphi(b)$ lies in an outlet, so $h(T_b) \leq Y$, whence $T_b = T'_b$.
Since $a \neq c$, we have $h(T_b) \geq Z \geq 0$ by the definition of $Z$, so $G$ has an outlet of length at least $Z + 1$ starting with $\varphi(c) \rightarrow \varphi(b) \rightarrow \cdots$.
Moreover, $d_T(c) = d_T(b) - 1 \equiv d_{T'}(b) - 1 = d_{T'}(a) \equiv d_T(a) \pmod{M}$,
so $\varphi(a)$ and $\varphi(c)$ are in the same block of $K$.
Now it follows from condition~\ref{cond:Z} that $(\varphi(a), \varphi(b)) \in E(G)$.
From $T_b = T'_b$ it follows that $\varphi$ maps all edges of the subtree $T'_b$ to edges of $G$.

Case 3.2.2.2:
Assume that $\varphi(c) \notin K$.
We claim that $c = x_\ell$.
Suppose, to the contrary, that the path $x_\ell =: y_0 \rightarrow y_1 \rightarrow \dots \rightarrow y_p := c$ from $x_\ell$ to $c$ in $T$ has length $p \geq 1$.
Then $\varphi(y_i) \notin K$ for all $i \in \{0, 1, \dots, p\}$ (otherwise $\varphi(c)$ would lie in an outlet, so $h(T_c) \leq Y$, whence $T_c = T'_c$, which is clearly a contradiction since $(c,b)$ is an edge in $T$ but not in $T'$).
In fact, $\varphi(x) \notin K$ for all $x \in V(T_{y_1})$ by condition~\ref{cond:L}.
Recall the path $x_\ell =: v_0 \rightarrow v_1 \rightarrow \dots \rightarrow v_z := x_r$ in $T$.
Since $\varphi(x_\ell) \notin K$, $\varphi(x_r) \in K$, and $d_T(v_1) = L + 1$, condition \ref{cond:L} implies $\varphi(v_1) \in K$.
Then condition~\ref{cond:omega} implies that $d_T(y_1) + h(T_{y_1}) \leq \omegaG(L+1,L+1) < \omega(L+1)$; hence $(d_T(y_1), h(T_{y_1})) \notin \OMEGAtt$, so $T_{y_1} = T'_{y_1}$.
Since $(c,b)$ is an edge in $T_{y_1}$, this implies that $(c,b)$ is also an edge of $T'$, a contradiction.

Since $c = x_\ell$, we have $d_T(b) = L + 1$.
Since $\varphi(b) \notin K$, condition~\ref{cond:L} implies that $\varphi(x) \notin K$ for all $x \in V(T_b)$.
Using again the fact that $x_\ell \rightarrow v_1$ is an edge of $T$, $\varphi(v_1) \in K$, and $d_T(v_1) = L + 1$,
condition~\ref{cond:omega} implies $d_T(b) + h(T_b) \leq \omegaG(L+1,L+1) < \omega(L+1)$; hence $(d_T(b), h(T_b)) \notin \OMEGAtt$, so $T_b = T'_b$.
On the other hand, $d_{T'}(b) > L + 1$.
Therefore
$\lambda \leq \max(h(T_b), h(T'_b)) = h(T_b)$ by the definition of $\lambda$,
so there is a path of length $\lambda$ starting at $\varphi(b)$.
Furthermore,
$d_T(a) \equiv d_{T'}(a) = d_{T'}(b) - 1 \equiv d_T(b) - 1 = d_T(c) = L = d_T(v_1) - 1 \pmod{M}$,
so $\varphi(a)$ and $\varphi(v_1)$ lie in consecutive blocks of $K$, that is, $\varphi(a) \rightarrow \varphi(v_1)$ is an edge.
Now the inequality $\lambdaG < \lambda$
implies that $(\varphi(a), \varphi(b)) \in E(G)$.
From $T_b = T'_b$ it follows that $\varphi$ maps all edges of the subtree $T'_b$ to edges of $G$.

This exhausts all cases, and we conclude that $\varphi$ is a homomorphism of $T'$ to $G$.
Switching the roles of $T$ and $T'$, the same argument shows that every homomorphism of $T'$ to $G$ is a homomorphism of $T$ to $G$.
Proposition~\PROPtwoone{} now yields $\GA{G} \models t \approx t'$.
\end{proof}


\section{Special cases}
\label{sec:special}

As an illustration of the parameters and results of the previous section, we now present how some of the main results of Part~I can be derived as special cases of Theorem~\ref{thm:main-parameters}.
When restricted to undirected graphs,
Theorem~\ref{thm:main-parameters} is reduced to the following proposition,
which together with Lemma~\LEMthreeone{} leads to
Theorem~\THMthreethree{}.

\begin{proposition}
Let $G$ be an undirected graph.
\begin{enumerate}[label={\upshape{(\roman*)}}]
\item If every connected component of $G$ is either trivial or a complete graph with loops, then $\GA{G}$ satisfies every bracketing identity.
\item
If every connected component is either trivial, a complete graph with loops, or a complete bipartite graph, and the last case occurs at least once, then $G$ satisfies a nontrivial bracketing identity $t \approx t'$ if and only if $\Mtt$ is even.
\item
Otherwise $G$ satisfies no nontrivial bracketing identity.
\end{enumerate}
\end{proposition}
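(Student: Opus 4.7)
The plan is to specialize Theorem~\ref{thm:main-parameters} to the symmetric digraph obtained from an undirected graph $G$ by replacing each edge with a pair of antiparallel arcs (and keeping each loop as a single arc). The first step is to observe that, by the symmetry of every arc, the strongly connected components of this digraph coincide with the connected components of $G$ that contain at least one edge or loop, together with the isolated vertices (which form the trivial components). In particular, no walk ever passes between two distinct nontrivial SCCs, so condition~\ref{cond:no-SCC-SCC} of Theorem~\ref{thm:main-parameters} is automatic. Moreover, arc symmetry forces $2 \equiv 0 \pmod m$ in any $m$-whirl, so only $1$-whirls and $2$-whirls can arise; these are precisely the complete graphs with loops and the complete bipartite graphs, respectively.

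The second step is to verify that almost all the $G$-side parameters collapse. Since isolated vertices carry no incident arcs, there are no edges between a trivial component and anything else, so every pleasant path has length at most $0$ and $\PG, \EG, \OG \leq 0$. Since every two vertices in the same block of a $1$- or $2$-whirl have identical in- and out-neighbourhoods, and since a whirl has no edge to the outside, one obtains $\ZG = \BG = \lambdaG = -\infty$ and $\omegaG(\ell,r) = -\infty$ for all admissible $\ell, r$. On the term side, any distinct $t, t' \in B_n$ satisfy $\Htt \geq 1$, $\Ltt \geq 0$, $\Ytt \geq -1$, $\Ztt \geq 0$, $\lambdatt \geq 0$, and $\omegatt(r) > \Ltt$. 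Combining these bounds shows that conditions \ref{cond:H}, \ref{cond:L}, \ref{cond:Y}, \ref{cond:Z}, \ref{cond:L+1}, \ref{cond:omega}, and \ref{cond:shortcut} of Theorem~\ref{thm:main-parameters} are satisfied automatically for every pair $(t,t')$.

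The classification then reduces to conditions~\ref{cond:whirl} and~\ref{cond:M} alone. In case~(i) every nontrivial component of $G$ is a $1$-whirl, so $\MG = 1$ divides every $\Mtt$, and Theorem~\ref{thm:main-parameters} yields every nontrivial bracketing identity. In case~(ii) at least one $2$-whirl is present and no $m$-whirl for $m > 2$ occurs, so $\MG = \lcm(1,2) = 2$, and the theorem reduces to the single requirement $2 \divides \Mtt$, i.e.\ $\Mtt$ is even. In case~(iii) some connected component of $G$ is neither trivial, complete with loops, nor complete bipartite, hence is a nontrivial strongly connected component that fails to be a whirl; condition~\ref{cond:whirl} is violated, so no nontrivial identity is satisfied. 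The main obstacle, though essentially mechanical, lies in step two: one must systematically check each of the eight graph parameters to confirm that it genuinely degenerates in the undirected setting, so that the corresponding condition of Theorem~\ref{thm:main-parameters} becomes vacuous.
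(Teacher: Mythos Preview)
Your proof is correct and follows essentially the same approach as the paper's own proof: both specialize Theorem~\ref{thm:main-parameters} to the symmetric digraph, observe that in the undirected setting the trivial strongly connected components are exactly the isolated vertices (forcing $\PG,\EG,\OG\leq 0$ and $\ZG=\BG=\lambdaG=\omegaG(\ell,r)=-\infty$), note that the only symmetric whirls are $1$- and $2$-whirls, and then reduce the problem to conditions~\ref{cond:whirl} and~\ref{cond:M}. The paper's argument is slightly terser in checking the parameter collapses, but the logic is the same.
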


\begin{proof}
The strongly connected components of an undirected graph are just its connected components, and every symmetric edge is part of a cycle.
Therefore, an undirected graph $G$ has no pleasant path of nonzero length and consequently no entryway nor outlet of nonzero length; thus $\PG \leq 0$, $\EG \leq 0$, $\OG \leq 0$.
It also clearly holds that $\BG = -\infty$, $\lambdaG = -\infty$, and $\omegaG(\ell,r) = -\infty$ for all $\ell, r \in \IN$ with $\ell \geq r \geq 1$.
The only whirls with symmetric edges are $1$\hyp{}whirls (i.e., complete graphs with loops) and $2$\hyp{}whirls (i.e., complete bipartite graphs).
From this it also easy to see that $\ZG = -\infty$,

For this reason, condition \ref{cond:no-SCC-SCC} of Theorem~\ref{thm:main-parameters} is automatically satisfied,
and conditions \ref{cond:H}--\ref{cond:shortcut} obviously hold for any $t, t' \in B_n$ with $t \neq t'$,
Therefore it is only conditions~\ref{cond:whirl} and \ref{cond:M} that matter.

Consider first the case that every nontrivial connected component of $G$ is a $1$\hyp{}whirl.
Then $\MG = 1$.
Since $1 \divides \Mtt$ for any $t, t' \in B_n$, $t \neq t'$, it holds that $\GA{G}$ satisfies every bracketing identity.

Consider now the case that every nontrivial connected component of $G$ is a $1$\hyp{}whirl or a $2$\hyp{}whirl and at least one of the components is a $2$\hyp{}whirl.
Then $\MG = 2$, so $\GA{G}$ satisfies a nontrivial bracketing identity $t \approx t'$ if and only if $2 \divides \Mtt$.

Finally, in the case when $G$ has a nontrivial connected component that is not a whirl, $\GA{G}$ satisfies no nontrivial bracketing identity.
\end{proof}

An equivalent characterization of associative digraphs is obtained as a special case of Theorem~\ref{thm:main-parameters}.

\begin{proposition}
\label{prop:associative-eq}
Let $G$ be a digraph.
Then $\GA{G}$ satisfies the identity $x_1 (x_2 x_3) \approx (x_1 x_2) x_3$
if and only if
the nontrivial strongly connected components of $G$ are complete graphs with loops, and for every vertex $v \in V(G)$, the outneighbourhood of $v$ is a nontrivial strongly connected component.
\end{proposition}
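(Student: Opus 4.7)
The plan is to apply Theorem~\ref{thm:main-parameters} to the specific pair $t := x_1(x_2 x_3)$, $t' := (x_1 x_2) x_3$ in $B_3$ and translate the ten conditions there into the structural characterization. I first identify the DFS trees: $T := G(t)$ is the path $x_1 \to x_2 \to x_3$ (height $2$), while $T' := G(t')$ is rooted at $x_1$ with children $x_2$ and $x_3$ (height $1$). The unique vertex of differing depth is $x_3$, and $x_3$ is also the unique vertex with $T_x = T'_x$ and distinct parents in $T$ and $T'$. Reading off the parameters gives $\Htt = \Mtt = 1$, $\Ltt = 0$, $\Ytt = -1$, $\Ztt = 0$, $\lambdatt = 0$, and $\omegatt \equiv 1$ on $\IN$ (since $\Deltatt = \{x_1,x_2\}$ yields $\xi_{t,t'} = 1$).

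Substituting these into Theorem~\ref{thm:main-parameters}, conditions~\ref{cond:whirl} and \ref{cond:M} force every nontrivial strongly connected component of $G$ to be a $1$-whirl, that is, a complete graph with loops. Condition~\ref{cond:H} ($\PG \leq 0$) forbids edges between trivial vertices, condition~\ref{cond:Y} ($\OG \leq 0$) forbids edges from a nontrivial SCC to a trivial vertex, and condition~\ref{cond:no-SCC-SCC} forbids edges between distinct nontrivial SCCs; jointly these force $\Nout{v} = K$ for every $v$ in a nontrivial SCC $K$. For a trivial vertex $v$ with some outneighbour, condition~\ref{cond:L+1} ($\BG = -\infty$) confines $\Nout{v}$ to a single nontrivial SCC $K$, and condition~\ref{cond:shortcut}, applicable because $\EG = \Ltt + 1 = 1$, upgrades this to the full equality $\Nout{v} = K$.

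For the converse I assume the structural characterization and verify each of the ten conditions directly. The unifying observation is that every nonempty outneighbourhood in $G$ equals a single $1$-whirl $K$, and every member of $K$ has outneighbourhood $K$; this immediately excludes pleasant paths of length $\geq 1$, entryways of length $\geq 2$, outlets of length $\geq 1$, branchings to two distinct nontrivial SCCs, and the configurations witnessing $\omegaG(1,1) \geq 1$ or $\lambdaG \geq 0$.

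The most delicate step will be deriving $\Nout{v} \supseteq K$ (not merely $\Nout{v} \subseteq K$) for a trivial vertex $v$ in the necessity direction: given a putative $k' \in K \setminus \Nout{v}$, one must produce a witness against $\lambdaG < 0$. Taking the entryway $v \to u_1$ with $u_1 \in K$, an arbitrary $w \in K$ (so $w \to u_1$ by completeness of $K$), and the length-$0$ walk at $v_0 := k'$ does the job: $(w, k') \in E(G)$ because $K$ is a complete graph with loops, while $(v, k') \notin E(G)$ by choice of $k'$, giving $\lambdaG \geq 0 = \lambdatt$, a contradiction.
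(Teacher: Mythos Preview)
Your proposal is correct and follows essentially the same approach as the paper: both compute the parameters of the pair $t = x_1(x_2 x_3)$, $t' = (x_1 x_2) x_3$, feed them into Theorem~\ref{thm:main-parameters}, and then argue that the resulting list of constraints on $G$ is equivalent to the stated structural description. Your parameter values ($\Htt=\Mtt=1$, $\Ltt=0$, $\Ytt=-1$, $\Ztt=0$, $\lambdatt=0$, $\omegatt\equiv 1$, $\Deltatt=\{x_1,x_2\}$) agree with the paper's, and your handling of the ``delicate'' step---using condition~\ref{cond:shortcut} with a length-$0$ walk to force $\Nout{v}\supseteq K$ for trivial $v$---is exactly the content of the paper's remark on condition~(x).
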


\begin{proof}
Denote $t := x_1 (x_2 x_3)$ and $t' := (x_1 x_2) x_3$.
It is straightforward to verify that this pair of bracketings has the following parameters (see \FIGfive):
\begin{align*}
&
\Htt = 1,
\quad
\Ltt = 0,
\quad
\Mtt = 1,
\quad
\Ytt = -1,
\quad
\Ztt = 0,
\\ &
\OMEGAtt = \{(0,2), (0,1), (1,1), (1,0)\},
\quad
\omegatt = (1,1,\dots),
\\ &
\Lambdatt = \{x_3\},
\quad
\lambdatt = 0.
\end{align*}
With these parameters, the conditions of Theorem~\ref{thm:main-parameters} for $\GA{G}$ to satify the identity $t \approx t'$ are reduced to the following:
\begin{enumerate}[label={\upshape{(\roman*)}}]
\item\label{ex:c:1} Every nontrivial strongly connected component of $G$ is a whirl.
\item\label{ex:c:2} There is no path from a nontrivial strongly connected component of $G$ to another.
\item\label{ex:c:3} $\MG = 1$.
\item\label{ex:c:4} $\PG \leq 0$.
\item\label{ex:c:5} $\EG \leq 1$.
(This follows already from \ref{ex:c:4}.)
\item\label{ex:c:6} $\OG \leq 0$.
\item\label{ex:c:7} $\ZG = -\infty$. (This is also a consequence of \ref{ex:c:1} and \ref{ex:c:6}.)
\item\label{ex:c:8} $\BG = -\infty$. In view of conditions \ref{ex:c:4} and \ref{ex:c:6}, this means that all outneighbours of a vertex belong to the same nontrivial strongly connected component.
\item\label{ex:c:9} $\omegaG(1,1) = -\infty$. (This is also a consequence of \ref{ex:c:4} and \ref{ex:c:6}.)
\item\label{ex:c:10} If $\EG = 1$, then $\lambda_G = -\infty$. This means that for any vertex $v$ belonging to a trivial strongly connected component, if $(v,u)$ is an edge, then $(v,w)$ is an edge for all vertices $w$ in the strongly connected component of $u$.
\end{enumerate}
The above conditions are easily seen to be equivalent to the following:
the nontrivial strongly connected components of $G$ are complete graphs with loops, and for every vertex $v \in V(G)$, the outneighbourhood of $v$ is an entire nontrivial strongly connected component.
\end{proof}


\section{Spectrum dichotomy}
\label{sec:dichotomy}

Theorem~\ref{thm:main-parameters} provides a necessary and sufficient condition for a graph algebra to satisfy a nontrivial bracketing identity.
However, the theorem does not directly give information on the number of distinct term operations of a graph algebra induced by the bracketings of a given size.
Although a general description of the associative spectra of digraphs still eludes us,
we can find some bounds for the possible associative spectra.
In fact, as we will see in Theorem~\ref{thm dichotomy for digraphs}, the associative spectrum of a graph algebra is either constant at most $2$ or it grows exponentially.

In preparation for this dichotomy result, we shall determine the associative spectrum of the graph algebra corresponding to a certain graph on three vertices (see Proposition~\ref{prop weird spectrum}).

\begin{lemma}\label{lemma asymptotics for Rn}
For $n\geq2$ let $R_n$ be the set of words $\rho$ of length $n$ over the alphabet $\{0,1\}$ 
that satisfy the following three conditions:
\begin{enumerate}[label=\upshape{(\roman*)}]
\item\label{asym:01} $\rho$ does not start with $01$,
\item\label{asym:10} $\rho$ does not end with $10$,
\item\label{asym:101} $\rho$ does not contain $101$.
\end{enumerate}
Then $\card{R_n}$ is asymptotically $\Theta(\alpha^n)$\footnote{This means that there exist positive constants $c_1$, $c_2$ such that $c_1 \alpha^n \leq \card{R_n} \leq c_2 \alpha^n$.}, 
where $\alpha \approx 1.755$ is the unique positive root of
the polynomial $x^4 - x^3 - x^2 - 1$.
\end{lemma}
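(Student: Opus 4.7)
The plan is to recast the three avoidance conditions as a single structural condition on maximal $0$-blocks and then read off the asymptotics from the characteristic polynomial of a three-state transfer matrix whose Perron eigenvalue will be exactly $\alpha$. The first observation is that $\rho \in R_n$ if and only if every maximal block of consecutive $0$s in $\rho$ has length at least $2$: condition~\ref{asym:101} forbids an interior singleton $0$-block, while \ref{asym:01} and \ref{asym:10} forbid, respectively, a leading and a trailing singleton $0$-block, and the converse is immediate.

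With this in hand, for $n \geq 2$ let $A_n$, $B_n$, $C_n$ count the length-$n$ binary words that satisfy \ref{asym:01} and \ref{asym:101} and end in $1$, in $00$, and in $10$, respectively. Appending one letter gives
\[
A_{n+1}=A_n+B_n, \qquad B_{n+1}=B_n+C_n, \qquad C_{n+1}=A_n,
\]
with $A_2=B_2=C_2=1$. Condition~\ref{asym:10} excludes only the $C$-ending, so $\card{R_n} = A_n + B_n$ for all $n \geq 2$. These recurrences amount to $(A_{n+1},B_{n+1},C_{n+1})^{T} = T\,(A_n,B_n,C_n)^{T}$, where
\[
T = \begin{pmatrix} 1 & 1 & 0 \\ 0 & 1 & 1 \\ 1 & 0 & 0 \end{pmatrix};
\]
a routine cofactor expansion yields the characteristic polynomial $x^3 - 2x^2 + x - 1$, and the algebraic identity
\[
(x+1)(x^3-2x^2+x-1) = x^4-x^3-x^2-1
\]
identifies the $\alpha$ of the statement with the unique positive real root of this cubic.

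Finally, a direct computation confirms that $T^{3}$ is entrywise positive, so $T$ is primitive. Perron--Frobenius then gives $\alpha$ as a simple eigenvalue strictly dominating the moduli of the two remaining (complex conjugate) eigenvalues, and the strictly positive initial vector $(1,1,1)^{T}$ has positive projection onto the entrywise positive Perron eigenvector. Therefore $\card{R_n} = c\alpha^{n} + O(\gamma^{n})$ with $c > 0$ and $\gamma < \alpha$, which yields $\card{R_n}=\Theta(\alpha^{n})$, as required. The only step requiring any real thought is the structural reformulation; everything afterwards reduces to routine matrix algebra and the polynomial factorization displayed above.
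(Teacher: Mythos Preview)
Your proof is correct and takes a genuinely different route from the paper's. The paper obtains the fourth\hyp{}order recurrence $\card{R_n}=\card{R_{n-1}}+\card{R_{n-2}}+\card{R_{n-4}}$ directly, via an explicit bijection $R_{n-1}\cup R_{n-2}\cup R_{n-4}\to R_n$ that appends $1$, $00$, or $1000$ respectively; the characteristic polynomial $x^4-x^3-x^2-1$ then appears immediately, and the asymptotics are read off from its roots. Your transfer\hyp{}matrix argument instead yields the \emph{minimal} recurrence: the cubic $x^3-2x^2+x-1$ is the true minimal polynomial of the sequence, and the quartic in the statement carries the spurious factor $x+1$, which you recover through the displayed factorisation. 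What the paper's bijection buys is that no algebraic identification step is needed---$\alpha$ is visibly a root of the announced quartic---whereas your approach trades that for a systematic state\hyp{}space method and a cleaner justification that the leading coefficient is nonzero: the paper simply writes $\card{R_n}=a\alpha^n+\cdots$ and asserts the dominant term is $a\alpha^n$ without checking $a\neq 0$, while your Perron--Frobenius argument (primitivity of $T$, positivity of the initial vector) handles this rigorously. One small remark: the block reformulation you open with is correct and nicely motivates the three states, but your actual recursion is set up directly from conditions~\ref{asym:01} and~\ref{asym:101}, so the reformulation is expository rather than load\hyp{}bearing.
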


\begin{proof}
It is straightforward to verify that the map $\psi$ defined by the following formula is a bijection from $R_{n-1} \cup R_{n-2} \cup R_{n-4}$ to $R_n$ for all $n \geq 6$:
\[
\psi(\rho)  =
\begin{cases}
\rho1, & \text{if $\rho \in R_{n-1}$,} \\
\rho00, & \text{if $\rho \in R_{n-2}$,} \\
\rho1000, & \text{if $\rho \in R_{n-4}$.}
\end{cases}
\]
Thus we have the recurrence relation 
$\card{R_n} = \card{R_{n-1}} + \card{R_{n-2}} + \card{R_{n-4}}$.
The characteristic polynomial of this linear recurrence is $x^4 - x^3 - x^2 - 1$, and its roots are
\[
\alpha \approx 1.755,
\quad
\beta \approx 0.123 + 0.745i,
\quad
\gamma \approx 0.123 - 0.745i,
\quad
\delta = -1.
\]
Therefore, $\card{R_n} = a \cdot \alpha^n + b \cdot \beta^n + c \cdot \gamma^n + d \cdot \delta^n$ for suitable complex numbers $a,b,c,d$.
Since $\alpha$ is the only characteristic root of absolute value greater than one, the dominant term is $a \cdot \alpha^n$; hence we have $\card{R_n} = \Theta(\alpha^n)$. 
\end{proof}

\begin{remark}
The sequence of values $\card{R_n}$ appears as sequence \href{http://oeis.org/A005251}{A005251} in the OEIS~\cite{OEIS}.
\end{remark}

\begin{proposition}\label{prop weird spectrum}
The associative spectrum $s_n$ of the graph algebra corresponding to the graph $G$ given by $V(G) = \{u,v,w\}$, $E(G) = \{(u,v),(u,w),(w,w)\}$ is $s_n = \card{R_{n-1}}$ for all $n \geq 3$.
Hence $s_n$ is asymptotically $\Theta(\alpha^n)$.
\end{proposition}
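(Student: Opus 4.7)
The plan is to identify a combinatorial invariant that completely determines the term operation induced by $t \in B_n$ on $\GA{G}$, and then to show that these invariants correspond bijectively to the words of $R_{n-1}$. By Proposition~\PROPtwoone, $\GA{G}$ satisfies $t \approx t'$ if and only if a map $\varphi \colon X_n \to V(G)$ is a homomorphism of $G(t)$ into $G$ exactly when it is a homomorphism of $G(t')$ into $G$. Thus $s_n$ counts the distinct sets $\mathrm{Hom}(G(t),G) \subseteq V(G)^{X_n}$ as $t$ ranges over $B_n$.

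First I would analyze $\mathrm{Hom}(T,G)$ for an arbitrary rooted tree $T$ on $n \geq 2$ vertices with root $x_1$. Since in $G$ the vertex $u$ has no inneighbour, $v$ has no outneighbour, and the unique outneighbour of $w$ is itself, every homomorphism $\varphi \colon T \to G$ must satisfy: only $x_1$ may be mapped to $u$; only leaves may be mapped to $v$; and every descendant of a $w$-labeled vertex is itself mapped to $w$. These constraints force $\mathrm{Hom}(T,G)$ to consist of the constant $w$ map together with all maps that send $x_1$ to $u$, an arbitrary subset $J$ of the \emph{leaf-children of $x_1$} in $T$ to $v$, and every remaining vertex to $w$. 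Writing $L_t \subseteq \{2,\dots,n\}$ for the set of indices of leaf-children of $x_1$ in $G(t)$, a direct check (testing the single-vertex assignments $J = \{i\}$ for $i$ in the symmetric difference of $L_t$ and $L_{t'}$) shows that $\mathrm{Hom}(G(t),G) = \mathrm{Hom}(G(t'),G)$ if and only if $L_t = L_{t'}$. Hence $s_n$ equals the number of distinct sets $L_t$.

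The final step is to characterize the subsets $S \subseteq \{2,\dots,n\}$ that arise as some $L_t$. Using the recursive construction in which $G(t_L t_R)$ is obtained from $G(t_L)$ by attaching the root of $G(t_R)$ as a new child of the root of $G(t_L)$, one sees that the children of $x_1$ in $G(t)$ are exactly the first variables of the successive right subtrees along the left spine of $t$; these right subtrees partition $\{x_2,\dots,x_n\}$ into an ordered sequence of consecutive blocks, and a child of $x_1$ is a leaf if and only if its block has size $1$. Every such partition is realized by some $t$ (e.g.\ by left-associating the blocks). Encoding a partition by the word $w = w_1 \cdots w_{n-1} \in \{0,1\}^{n-1}$ with $w_j = 1$ iff $j+1$ is a singleton block, the requirement that non-singleton blocks have size at least $2$ translates into every maximal run of $0$'s in $w$ having length at least $2$, which in turn is easily seen to be equivalent to $w$ neither starting with $01$, nor ending with $10$, nor containing $101$, i.e.\ to $w \in R_{n-1}$. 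This yields the bijection $s_n = |R_{n-1}|$, and Lemma~\ref{lemma asymptotics for Rn} then gives $s_n = \Theta(\alpha^n)$.

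The delicate step is the homomorphism classification: one must verify that whenever $\varphi(x_1) = u$, each non-leaf child of $x_1$ is forced to $w$ (and hence, by the cascading rule for $w$, so is its entire subtree), so that the only free choices are indeed the $v$-vs-$w$ labels on the leaf-children of $x_1$. Once this is established, the reduction to consecutive-block partitions and thence to the words in $R_{n-1}$ is routine combinatorics.
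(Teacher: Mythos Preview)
Your proposal is correct and follows essentially the same approach as the paper. Both arguments identify the set of depth-one leaves of $G(t)$ as the complete invariant of the term operation on $\GA{G}$, and then count the realizable such sets via the bijection with $R_{n-1}$. The only cosmetic difference is in the realizability step: the paper writes down the three conditions (a)--(c) on $S$ directly and exhibits an explicit DFS tree of height at most $2$ for each admissible $S$, whereas you pass through the description of the children of $x_1$ as a partition of $\{2,\dots,n\}$ into consecutive blocks and observe that $S$ is realizable iff no maximal run of $0$'s in the encoding has length $1$---which is exactly the conjunction of (i)--(iii). One phrasing nitpick: ``the requirement that non-singleton blocks have size at least $2$'' is tautological; what you mean (and what your argument actually uses) is that each maximal interval of non-$S$ elements must be coverable by blocks of size $\geq 2$, hence must itself have size $\neq 1$.
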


\begin{proof}
For any DFS tree $T$ of size $n$, a map $\varphi \colon X_n \to \{u,v,w\}$ is a homomorphism of $T$ into $G$ if and only if either $\varphi(X_n) = w$, or $\varphi(x_1) = u$ and all vertices mapped to $v$ are leaves of depth one in $T$:
\[
\forall p \in X_n \colon \varphi(p) = v \implies d_{T}(p) = 1 \text{ and } h(T_p) = 0.
\]
By Proposition~\PROPtwoone, this implies that $\GA{G}$ satisfies a bracketing identity $t \approx t'$ if and only if the corresponding DFS trees $G(t)$ and $G(t')$ have the same leaves on level one. 
Thus $s_n$ counts the number of subsets of $S \subseteq \{x_2, \dots, x_n\}$ that can occur as the set of ``depth\hyp{}one leaves'' of a DFS tree of size $n$.
We claim that such sets $S$ are characterized by the following three conditions:
\begin{enumerate}[label=\upshape{(\alph*)}]
\item\label{weird:a} if $x_3 \in S$, then $x_2 \in S$;
\item\label{weird:b} if $x_{n-1} \in S$, then $x_n \in S$;
\item\label{weird:c} if $x_i, x_{i+2} \in S$, then $x_{i+1} \in S$ for all $2 \leq i \leq n - 2$.
\end{enumerate}

It is clear that these conditions are necessary. Conversely, assume that $S = \{x_{i_1}, \dots, x_{i_s}\} \subseteq \{x_2, \dots, x_n\}$ with $2 \leq i_1 < \dots < i_s \leq n$ satisfies the three conditions above.
Let us construct a DFS tree $T$ of size $n$ as follows.
For each $x_{i_k} \in S$, let $x_{i_k}$ be a child of the root $x_1$, and let $x_{i_k}$ have no children.
If $k < s$ and $i_{k+1} > i_k+1$, then let $x_{i_k+1}$ be also a child of $x_1$, and let $x_{i_k+2}, \dots, x_{i_{k+1}-1}$ be the children of $x_{i_k+1}$. 
Note that condition \ref{weird:c} guarantees that this is a nonempty set of children; hence $x_{i_k+1}$ is not a leaf.
In addition, if $x_2 \notin S$ (i.e., $i_1 > 2$), then let $x_2$ be a child of $x_1$, and let $x_3, \dots, x_{i_1-1}$ be the children of $x_2$.
Again, condition \ref{weird:a} ensures that at least $x_3$ will be a child of $x_2$, hence $x_2$ is not a leaf in this case.
Similarly, if $x_n \notin S$ (i.e., $i_s < n$), then let $x_{i_s+1}$ be a child of $x_1$, and let $x_{i_s+2}, \dots, x_n$ be the children of $x_{i_s+1}$.
Condition \ref{weird:b} guarantees that $x_{i_s+1}$ is not a leaf.
This construction yields a DFS tree $T$ whose depth\hyp{}one leaves are exactly the elements of $S$.

If we encode a set $S \subseteq \{x_2, \dots, x_n\}$ by a word $\chi \in \{0,1\}^{n-1}$ in a standard way (i.e., $\chi_i = 1$ if and only if $i + 1 \in S$), then conditions \ref{weird:a}--\ref{weird:c} translate to conditions \ref{asym:01}--\ref{asym:101} of Lemma~\ref{lemma asymptotics for Rn}. 
Thus we can conclude that $s_n = \card{R_{n-1}} = \Theta(\alpha^n)$.
\end{proof}

\begin{lemma}
\label{lem:height<2}
For $n > 1$, the number of DFS trees on $n$ vertices of height at most $2$ is $2^{n-2}$.
\end{lemma}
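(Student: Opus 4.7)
The plan is to set up a bijection between DFS trees of size $n$ and height at most $2$ on the one hand, and compositions of $n-1$ into positive parts on the other. Since the number of compositions of $n-1$ into positive parts is well known to be $2^{n-2}$ (there are $n-2$ gaps among $1,2,\dots,n-1$, each of which may be independently chosen as a cut), this will give the desired count.

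First I would recall the DFS labeling convention: in any DFS tree $T$, the vertex set of each rooted induced subtree $T_{x_i}$ is a block of consecutive labels $\{x_i, x_{i+1}, \dots, x_{i+|T_{x_i}|-1}\}$. Applied to the root, this means that if $x_1$ has children $x_{j_1} = x_2, x_{j_2}, \dots, x_{j_r}$ listed in order of increasing index, with subtree sizes $s_1, \dots, s_r$, then $s_1 + s_2 + \cdots + s_r = n - 1$. Hence each DFS tree of size $n$ determines a composition $(s_1, \dots, s_r)$ of $n-1$ into positive parts, together with a DFS tree of size $s_k$ for each subtree $T_{x_{j_k}}$.

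Next I would use the height bound. A DFS tree has height at most $2$ if and only if each subtree rooted at a child of $x_1$ has height at most $1$. But the only DFS tree of a given size $s$ and height at most $1$ is the ``star'' in which the root has all remaining $s-1$ vertices as leaf children, and this star is uniquely determined by $s$. Consequently, a DFS tree of size $n$ and height at most $2$ is uniquely reconstructed from the composition $(s_1, \dots, s_r)$ of $n-1$, by placing children $x_2, x_{2+s_1}, x_{2+s_1+s_2}, \dots$ of the root and filling in each block of intermediate labels as leaves of the corresponding child.

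I expect no real obstacle: the argument is essentially a matter of unpacking the DFS labeling convention and observing that once the composition is fixed the tree is forced. The only point that requires a sentence of care is the verification that the construction from a composition yields a valid DFS tree in the sense used in the paper (i.e., that the labeling respects the depth\hyp{}first traversal order), which follows directly from the fact that consecutive blocks of labels are assigned to consecutive children of the root.
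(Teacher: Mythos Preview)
Your argument is correct. You set up a bijection between DFS trees of height at most $2$ and compositions of $n-1$, using that each depth\hyp{}one subtree must be a star and is therefore determined by its size; the count $2^{n-2}$ then follows from the standard enumeration of compositions.

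The paper takes a closely related but distinct route: it encodes a DFS tree by its depth sequence $(d_T(x_1),\dots,d_T(x_n))$ and observes that the height bound forces this sequence to lie in $\{0\}\times\{1\}\times\{1,2\}^{n-2}$, while conversely every such tuple is a valid ``zag sequence'' (Proposition~\PROPtwosix) and hence the depth sequence of a unique DFS tree (Proposition~\PROPtwofive). This gives $2^{n-2}$ immediately. The two encodings are in natural correspondence (the positions $i\geq 3$ with $d_i=1$ mark the starts of the parts of your composition), so the underlying combinatorics is the same. The paper's version has the advantage of leaning on machinery already in place from Part~I, so the ``valid DFS tree'' verification you flag as needing a sentence of care comes for free; your version is more self\hyp{}contained and does not require the reader to recall what a zag sequence is.
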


\begin{proof}
The depth sequence of a DFS tree on $n$ vertices of height at most $2$ is clearly an element of $\{0\} \times \{1\} \times \{1,2\}^{n-2}$, because the root $x_1$ is the only vertex at depth $0$, $x_2$ must have depth $1$, and the remaining vertices may have depth $1$ or $2$.
Conversely, every tuple $(d_1, d_2, \dots, d_n) \in \{0\} \times \{1\} \times \{1,2\}^{n-2}$
is a zag sequence and hence a depth sequence of some DFS tree by Proposition~\PROPtwosix.
The claim now follows, since DFS trees are uniquely determined by their depth sequences by Proposition~\PROPtwofive{}, and $\card{\{0\} \times \{1\} \times \{1,2\}^{n-2}} = 2^{n-2}$.
\end{proof}

\begin{lemma}\label{lemma level 1 equivalence}
Let $\sim$ be the equivalence relation on $B_n$ that relates $t$ and $t'$ if and only if $T: = G(t)$ and $T' := G(t')$ coincide up to level one, i.e.,
\[
\forall p \in X_n \colon d_{T}(p) = 1 \iff d_{T'}(p) = 1.
\]
Then $\card{B_n/{\sim}} = 2^{n-2}$ for $n \geq 2$.
\end{lemma}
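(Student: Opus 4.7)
The plan is to set up a bijection between the equivalence classes $B_n/{\sim}$ and the DFS trees on $n$ vertices of height at most $2$, and then invoke Lemma~\ref{lem:height<2}. Since $t \mapsto G(t)$ is a bijection from $B_n$ to the set of DFS trees of size $n$ (by Propositions~\PROPtwofive{} and \PROPtwosix), it is equivalent to count the equivalence classes of DFS trees under the relation of having the same set of depth\hyp{}one vertices.

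The key claim is that every $\sim$\hyp{}class contains exactly one DFS tree of height at most $2$. For a DFS tree $T$ of size $n$, let $S(T) := \{x_i \in X_n \mid d_T(x_i) = 1\}$; this set always contains $x_2$ and is a subset of $\{x_2, \ldots, x_n\}$. Given any $S = \{x_{i_1}, \ldots, x_{i_s}\} \subseteq \{x_2, \ldots, x_n\}$ with $2 = i_1 < i_2 < \cdots < i_s$, define a sequence $(d_1, d_2, \ldots, d_n)$ by $d_1 = 0$, $d_j = 1$ if $x_j \in S$, and $d_j = 2$ otherwise. This tuple lies in $\{0\} \times \{1\} \times \{1,2\}^{n-2}$, so by Proposition~\PROPtwosix{} it is a zag sequence, and by Proposition~\PROPtwofive{} it is the depth sequence of a unique DFS tree $T_S$. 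Clearly $h(T_S) \leq 2$ and $S(T_S) = S$.

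This shows two things at once. First, given any $t \in B_n$, the tree $T_{S(G(t))}$ is a DFS tree of height at most $2$ in the same $\sim$\hyp{}class as $t$ (via the bijection $G$), so every class contains at least one height\hyp{}$\leq 2$ representative. Second, if $T$ and $T'$ are both DFS trees of height at most $2$ with $S(T) = S(T')$, then their depth sequences are both equal to the one described above, so $T = T'$ by Proposition~\PROPtwofive. Hence each $\sim$\hyp{}class contains exactly one such representative, and $\card{B_n/{\sim}}$ equals the number of DFS trees of height at most $2$ on $n$ vertices, which is $2^{n-2}$ by Lemma~\ref{lem:height<2}.

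There is no serious obstacle here; the only point that requires care is the verification that the ``flattened'' candidate depth sequence $(0, 1, d_3, \ldots, d_n)$ really is realized by a DFS tree, and this is supplied directly by Proposition~\PROPtwosix{} since it is a zag sequence by inspection.
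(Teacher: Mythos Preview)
Your proof is correct and follows essentially the same idea as the paper's: both arguments show that the realizable depth\hyp{}one sets are exactly the subsets of $\{x_2,\dots,x_n\}$ containing $x_2$, by exhibiting a height\hyp{}$\leq 2$ DFS tree with any prescribed such set. The only cosmetic difference is that you route the final count through Lemma~\ref{lem:height<2} (bijection with height\hyp{}$\leq 2$ trees), whereas the paper simply observes that there are $2^{n-2}$ subsets of $\{x_2,\dots,x_n\}$ containing $x_2$; your representative $T_S$ is the same tree the paper constructs, just described via its depth sequence rather than its parent function.
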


\begin{proof}
We need to count sets $S\subseteq\{x_2,\dots,x_n\}$ that can occur as the set of depth\hyp{}one vertices of a DFS tree of size $n$.
Clearly, $x_2 \in S$ holds for such sets.
We claim that this condition is also sufficient.
Indeed, let $S = \{x_{i_1}, \dots, x_{i_s}\} \subseteq \{x_2, \dots, x_n\}$ with $2 = i_1 < \dots < i_s \leq n$, and let us construct a DFS tree $T$ as follows. 
For each $x_{i_k} \in S$, let $x_{i_k}$ be a child of the root $x_1$, and let $x_{i_k+1}, \dots, x_{i_{k+1}-1}$ be the children of $x_{i_k}$ (it is possible that this is an empty set of children).
Then the depth\hyp{}one vertices of $T$ are exactly the elements of $S$. 
We can conclude that $\card{B_n/{\sim}}$ is the number of subsets of $\{x_2, \dots, x_n\}$ that contain $x_2$, and this is obviously $2^{n-2}$.
\end{proof}

By a \emph{directed bipartite graph} we mean a bipartite graph $G = (V,E)$ with bipartition $V = V_1 \cup V_2$ such that $E \subseteq V_1 \times V_2$ (i.e., all edges go to the ``same direction'').
The \emph{weakly connected components} of a digraph $G$ are its induced subgraphs on (the vertex sets of) the connected components of the underlying undirected graph of $G$.

\begin{theorem}\label{thm dichotomy for digraphs}
For any digraph $G$ we have the following three mutually exclusive cases.
\begin{enumerate}[label=\upshape{(\roman*)}]
\item\label{dichotomy:1}
The associative spectrum of $\GA{G}$ is constant $1$. 
These digraphs are characterized in Proposition~\PROPfourone{} or, equivalently, in Proposition~\ref{prop:associative-eq}.
\item\label{dichotomy:2}
The associative spectrum of $\GA{G}$ is constant $2$. 
This holds if and only if each weakly connected component of $G$ is either associative or a directed bipartite graph with at least one edge, and the latter occurs at least once.
\item\label{dichotomy:weird}
In all other cases the associative spectrum of $\GA{G}$ is bounded below by the spectrum of the graph given in Proposition~\ref{prop weird spectrum}, i.e.,  $s_n(\GA{G}) \geq \card{R_{n-1}} = \Theta(\alpha^n)$ \textup{(}cf.\ Lemma~\ref{lemma asymptotics for Rn}\textup{)}.
\end{enumerate}
\end{theorem}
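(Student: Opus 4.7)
The proof will handle the three (mutually exclusive) cases in turn.

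Case (i) is immediate from Proposition~\PROPfourone{} (equivalently, Proposition~\ref{prop:associative-eq}).

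For case (ii), I first reduce to weakly connected components: for $n \geq 2$, any homomorphism from a DFS tree to $G$ has image in a single weakly connected component, so the identities satisfied by $\GA{G}$ equal the intersection of those satisfied by its components' graph algebras. Associative components impose no constraints; for a directed bipartite component $B$ with at least one edge, every strongly connected component is a trivial singleton, so $\MG[B] = 1$, $\PG[B] = 1$, and the parameters $\EG[B], \OG[B], \ZG[B], \BG[B], \omegaG[B], \lambdaG[B]$ are all $-\infty$. Every condition of Theorem~\ref{thm:main-parameters} is then vacuous except (iv), which reduces to $H_{t,t'} \geq 2$. Since $H_{t,t'} = \omegatt(0) = \min(h(G(t)), h(G(t')))$, this fails precisely when one of $G(t), G(t')$ is the unique star (the height-$1$ DFS tree of size $n$); hence $B_n$ splits into at most two classes. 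The lower bound $s_n \geq 2$ for $n \geq 3$ is witnessed by the map sending $x_1$ to the tail and every other $x_i$ to the head of an edge in a bipartite component: this is a homomorphism of the star but of no height-$\geq 2$ DFS tree.

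For case (iii), the objective is $s_n(\GA{G}) \geq \card{R_{n-1}}$. My strategy is to prove the inclusion of identity sets $\mathrm{Id}(\GA{G}) \subseteq \mathrm{Id}(\GA{G_{\mathrm{w}}})$, where $G_{\mathrm{w}}$ is the graph of Proposition~\ref{prop weird spectrum}; a larger identity set yields a coarser equivalence, so this gives $s_n(\GA{G}) \geq s_n(\GA{G_{\mathrm{w}}}) = \card{R_{n-1}}$. First I would dispatch the easy subcase: whenever $G$ has a nontrivial strongly connected component that is not a whirl, or a path between two distinct nontrivial strongly connected components, conditions (i) or (ii) of Theorem~\ref{thm:main-parameters} force $\GA{G}$ to satisfy no nontrivial identity at all, so $s_n = C_{n-1} \geq \card{R_{n-1}}$. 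In the remaining configuration, all nontrivial strongly connected components of $G$ are whirls and are pairwise path-disconnected. I would then proceed condition by condition, matching the parameters of $G_{\mathrm{w}}$ (namely $M_{G_{\mathrm{w}}} = P_{G_{\mathrm{w}}} = E_{G_{\mathrm{w}}} = 1$, $O_{G_{\mathrm{w}}} = \lambda_{G_{\mathrm{w}}} = 0$, $Z_{G_{\mathrm{w}}} = B_{G_{\mathrm{w}}} = -\infty$, and a readily computed $\omega_{G_{\mathrm{w}}}$) against those of $G$. Non-associativity and non-bipartiteness of $G$ should furnish $\PG \geq 1$, $\EG \geq 1$, $\OG \geq 0$ and suitable bounds on $\omegaG$, so that every inequality of Theorem~\ref{thm:main-parameters} holding for $\GA{G}$ forces the corresponding inequality for $\GA{G_{\mathrm{w}}}$.

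The main obstacle will be the degenerate subcase where $G$ has no trivial strongly connected component, so $\PG = -\infty < 1 = P_{G_{\mathrm{w}}}$ and the clean monotonicity argument breaks down. In this subcase $G$ is a disjoint union of whirls, and non-associativity forces at least one $m$-whirl with $m \geq 2$; I would then directly exhibit an exponential family of pairwise non-equivalent bracketings, separated by the depth-parity profile of their DFS trees modulo $m$, producing a lower bound of order $2^n$ that comfortably exceeds $\card{R_{n-1}}$. An additional delicate step is verifying $\lambdaG \geq 0$ whenever $\EG = 1$, so that condition (x) of Theorem~\ref{thm:main-parameters} transfers from $G$ to $G_{\mathrm{w}}$; this should reduce to a short structural observation on the shortcut configurations available in case-(iii) digraphs.
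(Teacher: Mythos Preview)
Your treatment of cases~(i) and~(ii) is sound and matches the paper's argument closely. The gap is in case~(iii): the inclusion $\mathrm{Id}(\GA{G}) \subseteq \mathrm{Id}(\GA{G_{\mathrm{w}}})$ that you aim for is \emph{false} in general, even outside the ``degenerate subcase'' you flagged. Take $G = C_2 \sqcup (a \to b)$, the disjoint union of a directed $2$-cycle and a single directed edge. This graph has trivial strongly connected components (namely $\{a\}$ and $\{b\}$), so it lies in your main case, not the degenerate one. Its parameters are $\MG = 2$, $\PG = 1$, $\EG = \OG = 0$, and all of $\ZG, \BG, \omegaG, \lambdaG$ are $-\infty$; hence by Theorem~\ref{thm:main-parameters}, $\GA{G}$ satisfies $t \approx t'$ if and only if $2 \mid \Mtt$ and $\Htt \geq 2$. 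Now consider $t, t' \in B_6$ with depth sequences $(0,1,2,3,4,1)$ and $(0,1,2,3,4,5)$: here $\Mtt = 4$, $\Htt = 4$, $\Ltt = 0$, and $\lambdatt = 0$ (witnessed by the leaf $x_6$). So $\GA{G}$ satisfies $t \approx t'$, but $\GA{G_{\mathrm{w}}}$ does not, since condition~(x) of Theorem~\ref{thm:main-parameters} requires $\lambdatt \geq 1$ when $\Ltt = 0$. The parameter-monotonicity route therefore cannot establish the identity inclusion; the problem is not merely the absence of trivial components but the presence of an $m$-whirl with $m \geq 2$, which relaxes constraints on $\GA{G}$ in directions orthogonal to those of $G_{\mathrm{w}}$.

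The paper proceeds quite differently. It argues by contrapositive: assuming the spectrum is subexponential, it uses conditions~(i)--(vi) of Theorem~\ref{thm:main-parameters} together with explicit counts of DFS-tree families (Lemmata~\ref{lem:height<2} and~\ref{lemma level 1 equivalence}, plus results from Part~I) to force $\MG = 1$, $\PG \leq 1$, $\EG \leq 1$, $\OG \leq 0$. It then decomposes the vertex set and, whenever the remaining structure is not already of type~(i) or~(ii), locates an \emph{induced subgraph} isomorphic to one of a short list of three-vertex graphs --- those of Propositions~\PROPfiveeight, \PROPfivenine, and~\ref{prop weird spectrum} --- each of which has exponential spectrum. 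Since induced subgraphs give lower bounds on the spectrum, this yields $s_n(\GA{G}) \geq \card{R_{n-1}}$ in every such case. The key difference from your plan is that the paper never compares $G$ to $G_{\mathrm{w}}$ parameter-by-parameter; it instead finds $G_{\mathrm{w}}$ (or a graph with an even larger spectrum) sitting inside $G$.
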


\begin{proof}
Let $G$ be an arbitrary digraph, and let $s_n = s_n(\GA{G})$ denote the associative spectrum and $\sigma_n = \sigma_n(\GA{G})$ denote the fine associative spectrum of the corresponding graph algebra. 
Let us assume that $s_n$ does not grow exponentially.
Then $G$ satisfies conditions \ref{cond:whirl} and \ref{cond:no-SCC-SCC} of Theorem~\ref{thm:main-parameters} (otherwise the associative spectrum would consist of the Catalan numbers). 
If $\MG \geq 2$, then $G$ contains an induced subgraph that is isomorphic to the directed cycle $\GRcycle{m}$ for some $m \geq 2$; hence $s_n \geq s_n(\GRcycle{m}) \geq s_n(\GRcycle{2}) = 2^{n-2}$ by Proposition~\PROPfivefour{} and Remark~\REMfivefive, contradicting our assumption on the growth of the spectrum.
If $\PG \geq 2$, then condition \ref{cond:H} of Theorem~\ref{thm:main-parameters} shows that all bracketings corresponding to DFS trees of height at most $2$ fall into different equivalence classes of the fine spectrum $\sigma_n$.
Therefore, Lemma~\ref{lem:height<2} implies that $s_n \geq 2^{n-2}$, a contradiction. 
If $\EG \geq 2$, then by condition \ref{cond:L} of Theorem~\ref{thm:main-parameters}, bracketings $t, t' \in B_n$ fall into different equivalence classes of the fine spectrum whenever the corresponding DFS trees differ at level one.
Hence, by Lemma~\ref{lemma level 1 equivalence}, we have $s_n \geq 2^{n-2}$, which is a contradiction again.
A similar argument using condition \ref{cond:Y} of Theorem~\ref{thm:main-parameters} and Lemma~\LEMfivesix{} shows that $\OG \geq 1$ also leads to the contradiction $s_n \geq 2^{n-2}$.

We have proved thus far that if $\GA{G}$ has a subexponential spectrum, then $G$ satisfies conditions \ref{cond:whirl} and \ref{cond:no-SCC-SCC} of Theorem~\ref{thm:main-parameters} and the (in)equalities $\MG = 1$, $\PG \leq 1$, $\EG \leq 1$, $\OG \leq 0$.
Let us assume that the latter hold, and let $V_0$ be the union of the vertex sets of the nontrivial strongly connected components of $G$ (if there are any).
From $\PG \leq 1$, $\EG \leq 1$ and $\OG \leq 0$ we can see that no vertex of $V \setminus V_0$ can have an inneighbour and an outneighbour at the same time.
Let $V_1$ be the set of vertices that have an outneighbour, and let $V_2 := V \setminus (V_0 \cup V_1)$.
Thus $V = V_0 \cup V_1 \cup V_2$ (some of these sets might be empty), and the subgraph induced on $V_1 \cup V_2$ is a directed bipartite graph, whereas the subgraph induced on $V_0$ is a disjoint union of complete graphs with loops by conditions \ref{cond:whirl} and \ref{cond:no-SCC-SCC} of Theorem~\ref{thm:main-parameters} and by $\MG = 1$.
Since $\OG \leq 0$, there is no edge from $V_0$ to $V_1 \cup V_2$, and there is no edge from $V_2$ to $V_0$ by the definition of $V_2$, but we may have edges from $V_1$ to $V_0$.

Let $(v_1, v_0)$ be such an edge (i.e., $v_1 \in V_1$ and $v_0 \in V_0$).
If $v'_0$ is another vertex in the strongly connected component of $v_0$, then we must have the edge $(v_1, v'_0)$.
Indeed, if this was not the case, then subgraph induced on $\{v_1, v_0, v'_0\}$ would be isomorphic to the graph of Proposition~\PROPfivenine, and it has an exponential spectrum.
(Note that the spectrum of any induced subgraph provides a lower estimate of the spectrum of the whole graph.)
On the other hand, if $v'_0$ belongs to another nontrivial strongly connected component, then the presence of the edge $(v_1, v'_0)$ would give rise to an induced subgraph isomorphic to that of Proposition~\PROPfiveeight, again contradicting our assumption about the subexponential growth of the spectrum.
Thus we have proved that if a vertex of $V_1$ has outneighbours in $V_0$, then these outneighbours form a nontrivial strongly connected component.

Finally, if a vertex $v_1 \in V_1$ has an outneighbour $v_0 \in V_0$ and also an outneighbour $v_2 \in V_2$, then the subgraph induced on $\{v_1, v_2, v_0\}$ is isomorphic to the graph of Proposition~\ref{prop weird spectrum}, forcing again an exponential spectrum.
Thus some vertices of $V_1$ have outneighbours only in $V_0$, while others have outneighbours only in $V_2$.
The former vertices together with $V_0$ form an associative graph (see Proposition~\ref{prop:associative-eq}), while the latter vertices together with $V_2$ form a directed bipartite graph.
This proves that every digraph with a subexponential associative spectrum belongs to cases \ref{dichotomy:1} or \ref{dichotomy:2} of the current theorem.

It only remains to prove that the spectrum of a directed bipartite graph with at least one edge is constant $2$.
But this is easily done with the help of Theorem~\ref{thm:main-parameters}.
All conditions except for \ref{cond:H} are satisfied trivially for all $t, t' \in B_n$ with $t \neq t'$.
Condition \ref{cond:H} gives $1 = \PG < \Htt$, which means that $\sigma_n$ has two equivalence classes: $\{t\}$ and $B_n \setminus \{t\}$, where 
$t = ((\cdots((x_1 x_2) x_3) \cdots) x_{n-1}) x_n$ is the bracketing that corresponds to the unique DFS tree of size $n$ and height $1$.
\end{proof}

\begin{remark}
Theorem~\ref{thm dichotomy for digraphs} implies that there are only two different bounded spectra of graph algebras, namely constant $1$ and constant $2$.
For arbitrary groupoids, all sequences of the form $(2, \dots, 2, 1, 1, \dots)$ can occur as associative spectra, and there are other bounded spectra (e.g., constant $3$), too \cite{CsaWal-2000}.
Theorem~\ref{thm dichotomy for digraphs} also implies that unboundeded spectra of graph algebras grow exponentially, the smallest growth rate being $\Theta(\alpha^n)$.
This is not true for arbitrary groupoids either: there exist groupoids with polynomial spectra of arbitrary degrees \cite{LieWal-2009}.
\end{remark}


\section*{Acknowledgments}

The authors would like to thank Mikl\'{o}s Mar\'{o}ti and Nikolaas Verhulst for helpful discussions.

This work is funded by National Funds through the FCT -- Funda\c{c}\~ao para a Ci\^encia e a Tecnologia, I.P., under the scope of the project UIDB/00297/2020 (Center for Mathematics and Applications) and the project PTDC/MAT-PUR/31174/\discretionary{}{}{}2017.
Research partially supported by the Hungarian Research, Development and Innovation Office grant K115518, and by grant TUDFO/47138-1/2019-ITM of the Ministry for Innovation and Technology, Hungary.


\end{document}